\newcommand{\OmMg}{\Omega\mathcal M_g}
\newcommand{\area}{\operatorname{area}}
\newcommand{\Sone}{S^{(1)}}
\newcommand{\Sa}{S^{(a)}}
\newcommand{\omegaone}{\omega^{(1)}}
\newcommand{\omegaa}{\omega^{(a)}}
\newcommand{\Omegaone}{\Omega^{(1)}}
\newcommand{\Omegaa}{\Omega^{(a)}}
\newcommand{\bb}{\mathbb}
\newcommand{\C}{\bb C}
\newcommand{\Z}{\bb Z}
\newcommand{\R}{\bb R}
\newcommand{\N}{\bb N}
\newcommand{\Ell}{\mathcal L}
\newcommand{\covol}{\operatorname{covol}}
\newcommand{\Aff}{\operatorname{Aff}}
\newcommand{\vol}{\operatorname{vol}}
\newcommand{\hh}{\mathcal H}
\newcommand{\cC}{\mathcal C}
\newcommand{\om}{\omega}
\newcommand{\La}{\Lambda}
\newcommand{\cG}{\mathcal G}
\newcommand{\cR}{\mathcal R}
\newcommand{\ir}{\iota_{\cR}}
\newcommand{\Ng}{N_{\cG}}
\newcommand{\Nc}{N_{\cC}}
\newcommand{\zone}{z^{(1)}}
\newcommand{\za}{z^{(a)}}
\newcommand{\JA}{\textcolor{purple}}
\newcommand{\SC}{\operatorname{SC}}
\newcommand{\cone}{c^{(1)}}
\newcommand{\ca}{c^{(a)}}
\newtheorem{Theorem}{Theorem}
\newtheorem{Cor}[Theorem]{Corollary}
\newtheorem{Prop}[Theorem]{Proposition}
\newtheorem{lemma}[Theorem]{Lemma}
\numberwithin{equation}{section}
\numberwithin{Theorem}{section}
\begin{document}
\title{Complexity for billiards in regular $N$-gons}
\author[Athreya]{Jayadev S.~Athreya}
\address{Department of Mathematics, University of Washington, Box 354350, Seattle, WA 98195, USA}
\email{jathreya@uw.edu}
\author[Hubert]{Pascal Hubert}
\address{Aix-Marseille Universit\'e,
Institut de Math\'ematiques de Marseille (I2M), UMR 7373, 3 place Victor Hugo, Case 19
13331 Marseille Cedex 3, France}
\email{pascal.hubert@univ-amu.fr}
\author[Troubetzkoy]{Serge Troubetzkoy}
\address{Aix-Marseille Universit\'e,
Institut de Math\'ematiques de Marseille (I2M), UMR 7373, 3 place Victor Hugo, Case 19
13331 Marseille Cedex 3, France}
\email{serge.troubetzkoy@univ-amu.fr}
\begin{abstract} We compute the complexity of the billiard language of the regular Euclidean $N$-gons (and other families of \emph{rational lattice polygons}), answering a question posed in  Cassaigne-Hubert-Troubetzkoy~\cite{CHT}. Our key technical result is a counting result for saddle connections on lattice surfaces, when we count by \emph{combinatorial} length.
\end{abstract}

\maketitle

%

\section{Introduction and main results}\label{sec:intro}

\subsection{Billiards and complexity}\label{subsec:billiards} Let $Q$ be a simply-connected, planar, Euclidean polygon, and let $B_s: Q \times S^1 \rightarrow  Q \times S^1$ denote the billiard flow on $Q$, that is, the movement of a point mass at unit speed on $Q$, with elastic collisions with sides. Labeling the sides of $Q$ by a finite alphabet $\mathcal A$, we define the \emph{language} $\Ell(Q)$ to be the set of all words in $\mathcal A$ which arise as a bounce sequence of a billiard trajectory in $Q$. For $t \in \N$, let $\Ell(t, Q)$ denote the set of words of length $t$ in this language, and let $\rho(Q, t) = \# \Ell(Q, t)$. 
\paragraph*{\bf Regular polygons}
An application of our main results is precise cubic asymptotics for the complexity of the billiard language in the regular $N$-gon $P_N$, a question posed by Cassaigne-Hubert-Troubetzkoy~\cite{CHT}*{\S1}, and also studied experimentally by Davis-Lelievre~\cite{DavisLelievre}*{Conjecture 4.12}:

\begin{Theorem}\label{theorem:billiardcomplexity:Ngons} There are explicit constants $c_N$ such that \begin{equation}\label{eq:billiardcomplexity:Ngons}\lim_{t \rightarrow \infty} \frac{\rho(P_N, t)}{t^3} = c_N.\end{equation} For $N \geq 5$, \begin{equation}\label{eq:cN}c_N = \sigma_N \frac{N^4 \sin^2\left(\frac{2\pi}{N}\right)}{48\pi^2(N-2)} \left( \frac{1}{12} N^2 - \frac{1}{4\sin^2\left(\frac{\pi}{N}\right)} - \frac{1}{12}\right),
\end{equation}
where
  \begin{equation}\label{eq:sigmaN}
  \sigma_N = \begin{cases}
      1 &\mbox{ for }N \mbox{ even}\\
      \frac{4\cos \left( \frac{\pi}{N} \right)}{\left(1+\cos \left( \frac{\pi}{N} \right)\right)^2} &\mbox{ for }N \mbox{ odd}
  \end{cases}
\end{equation}
We have \begin{equation}\label{eq:cN:asymptotics} \lim_{N \rightarrow \infty} \frac{c_N}{N^3} =   \frac{1}{48} \left( \frac 1 3 - \frac{1}{\pi^2} \right).\end{equation}
\end{Theorem}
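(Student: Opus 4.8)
The plan is to move from the billiard to the unfolded translation surface, reduce the complexity to a count of saddle connections by \emph{combinatorial} length, and then evaluate everything explicitly for the regular $N$-gon. For the first reduction, recall that the bounce coding of a rational polygon bifurcates precisely along trajectories that run into a corner: a finite-difference analysis of $\rho(P_N,\cdot)$, in the spirit of \cite{CHT} and earlier work on polygonal complexity, shows that $\rho(P_N,t+1)-2\rho(P_N,t)+\rho(P_N,t-1)$ equals, up to a lower-order error, the number of \emph{generalized diagonals} of $P_N$ --- billiard segments joining two corners --- with exactly $t$ bounces. Since the number of generalized diagonals with at most $s$ bounces will turn out to be $\sim b_N s^2$, the second differences grow linearly, and a double summation gives $\rho(P_N,t)\sim\tfrac13 b_N t^3$; thus $c_N=b_N/3$ and the theorem reduces to the computation of $b_N$.

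Next I would fix the translation surface. Unfolding the billiard in $P_N$ produces the classical Veech surface $X_N$: the regular $N$-gon with opposite sides glued for $N$ even, the ``double $N$-gon'' for $N$ odd --- this parity dichotomy is the source of $\sigma_N$ in \eqref{eq:sigmaN}. The surface $X_N$ is a lattice surface whose Veech group is a triangle group, its area is an explicit multiple of $\tfrac N4\cot\tfrac\pi N$, and in each of its cusp (periodic) directions it decomposes into cylinders whose circumferences and heights are explicit multiples of $\sin(\pi k/N)$; all of this data is classical. The sides of $P_N$ lift to an explicit finite union $\Gamma_N$ of families of parallel saddle-connection segments, equally spaced in direction; the number of bounces of a billiard trajectory is exactly the number of transverse crossings of $\Gamma_N$ by the corresponding geodesic on $X_N$, and the generalized diagonals of $P_N$ are exactly the saddle connections of $X_N$ joining (suitable) cone points.

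The core of the argument --- and the main obstacle --- is to count these saddle connections by crossing number rather than by Euclidean length; this is the general ``counting by combinatorial length on lattice surfaces'' statement that drives the paper. By the Veech dichotomy every saddle connection of $X_N$ lies in a periodic direction $\theta$, where the surface is cut into cylinders and, because a long saddle connection winds many times and equidistributes, the number of crossings with $\Gamma_N$ of a geodesic of Euclidean length $\ell$ is $F_N(\theta)\,\ell+o(\ell)$, where a Santal\'o/Crofton computation inside the cylinders gives the closed form
\[
F_N(\theta)=\frac{1}{\area(X_N)}\sum_{e\subset\Gamma_N}|e|\,\big|\sin(\theta-\theta_e)\big| .
\]
Combining the change of variables $\ell\mapsto\ell\,F_N(\theta)$ with the standard quadratic counting and direction-equidistribution of $\#\{\gamma:|v_\gamma|\le R\}$ on lattice surfaces, and summing over cylinder families, one obtains $\#\{\gamma:\ell_{\mathrm{comb}}(\gamma)\le s\}\sim b_N s^2$, with $b_N$ an explicit combination of $\area(X_N)$, the Veech-group covolume, the cylinder circumferences, and $\tfrac{1}{2\pi}\int_0^{2\pi}F_N(\theta)^{-2}\,d\theta$. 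The delicate points are the uniform control of the error term in the crossing count along individual saddle connections and the configuration/cylinder bookkeeping; once the counting theorem is established for lattice surfaces in general, $P_N$ is a special case.

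Finally I would evaluate $b_N$ and pass to the limit. Substituting the explicit data above and carrying out the trigonometric simplifications --- in which the identity $\sum_{k=1}^{N-1}\csc^2(\pi k/N)=\tfrac13(N^2-1)$ produces the bracket $\tfrac1{12}N^2-\tfrac{1}{4\sin^2(\pi/N)}-\tfrac1{12}=\tfrac14\sum_{k=2}^{N-1}\csc^2(\pi k/N)$, the triangle-group covolume produces the factor $N-2$, and the side lengths and cylinder circumferences produce the factor $N^4\sin^2(2\pi/N)$ --- gives $c_N=b_N/3$ in the form \eqref{eq:cN}--\eqref{eq:sigmaN}; here the even/odd split must be carried carefully through $\sigma_N$, and one must check that degenerate saddle connections and the two orientations are counted consistently. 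Then \eqref{eq:cN:asymptotics} is a routine Taylor expansion: $\sigma_N\to1$, the prefactor $\sigma_N\,\dfrac{N^4\sin^2(2\pi/N)}{48\pi^2(N-2)}\to\dfrac N{12}$, the bracket is $N^2\!\left(\tfrac1{12}-\tfrac1{4\pi^2}\right)+O(1)$, and hence $c_N\sim\dfrac{N^3}{12}\!\left(\tfrac1{12}-\tfrac1{4\pi^2}\right)=\dfrac{N^3}{48}\!\left(\tfrac13-\tfrac1{\pi^2}\right)$.
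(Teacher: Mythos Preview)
Your outline matches the paper's strategy closely: the reduction to counting generalized diagonals via \cite{CHT} (the paper uses the exact sum $\rho(P_N,t)=\sum_{n<t}N_{\cC}(P_N,n)$ rather than second differences, but this is cosmetic), the unfolding to the Veech surface $S_N$, the replacement of bounce count by the Crofton/regularized length $F_N(\theta)\,\ell$, and the explicit evaluation via Veech's cusp formula together with the identity $\sum_{k=1}^{m-1}\csc^2(\pi k/m)=(m^2-1)/3$. Your integral $\int_0^{2\pi} F_N(\theta)^{-2}\,d\theta$ is, up to a constant, precisely the area of the paper's region $\Omegaone_{\cC}=\{z:\sum_j|z\wedge z_j|\le 1\}$; the paper packages this more geometrically by observing that $\Omegaone_{\cC}$ is itself a regular $N$-gon (resp.\ $2N$-gon for $N$ odd), which makes the closed form immediate. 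The one substantive point your sketch defers is the mechanism behind ``crossing count $=F_N(\theta)\,\ell+o(\ell)$'': this does \emph{not} hold for every long saddle connection, only for most of them in the counting sense, and the paper's Proposition~\ref{prop:regular} establishes this by combining Dozier's equidistribution of arc-length measures along saddle connections (to show that the set of non-equidistributing saddle connections is subquadratic) with a spacing bound (Lemma~\ref{lemma:spacings}) that uses the lattice property --- specifically, the uniform lower bound on cylinder areas --- to control the excess crossings of $\gamma$ in small rectangles near the endpoints of each $\gamma_j\in\cC$.
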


\paragraph*{\bf Triangles and squares} While there is a constant $c_N$ for all $N$, formula \eqref{eq:billiardcomplexity:Ngons} does not apply to $N=3, 4$. For $N=3, 4$, $c_3 = 3/4\pi^2$ and $c_4 = 4/\pi^2$ were computed by Cassaigne-Hubert-Troubetzkoy~\cite{CHT}. Formula \eqref{eq:billiardcomplexity:Ngons} gives values $3/8\pi^2$ and $2/\pi^2$ respectively. This factor of two comes from an index 2 difference in the computation of the affine diffeomorphism group in \S\ref{sec:constants:Ngons}, drawn from Veech~\cite{Veech:eisenstein}*{Theorem 1.1}.

\paragraph*{\bf History and prior results} 
The study of complexity of billiards in polygons
began with the square.  Label the sides by two letters, one for vertical sides, the other for horizontal sides,  billiard orbits are
coded by irrational rotations, or Sturmian sequences. These sequences are among the oldest objects studied in symbolic dynamics, going back to at least Morse-Hedlund~\cite{MorseHedlund}. 
Motivated by computational questions, the growth rate of the complexity $\rho(Q,t)$ of the billiard in the square with 
respect to this coding
was computed by Mignosi~\cite{Mignosi} and Berstel-Pochiola~\cite{Berstel}. Rauzy (see, for example~\cite{Rauzy}) developed a program to connect low-complexity sequences, dynamics, and geometry, trying to generalize these links to include substitution systems, interval exchange transformations, and more general billiards. Hubert~\cites{HubertComplexity,HubertTriangles} studied the analogue of the Sturmian coding for arbitrary rational polygons, he showed that billiard orbits
are coded by interval exchange transformations.  
Galperin-Kr\"uger-Troubetzkoy~\cite{GKT} showed that the partition of the phase space is a topological generator up to periodic cylinders, further linking  $\rho(Q,t)$ to the entropy of the billiard. 
Cassaigne-Hubert-Troubetzkoy~\cite{CHT} found a formula linking $\rho(Q,t)$ to the number of \emph{generalized diagonals}, billiard trajectories connecting corners, of
length at most $n$, which implied that (using geometric counting results of Masur~\cites{Masur1, Masur2}) that
 for any rational polygon $Q$
there exist strictly positive constants $c,C$ such that 
$$c \le \frac{\rho(Q,t)}{t^3} \le C$$ for all $t \in \N$, and gave a geometric (lattice point counting) based computation of the asymptotic complexity for squares and equilateral triangles, and authors posed the question if $\rho(Q,t)/t^3$ has a limit (in particular for regular $N$-gons) and if this is the case how to compute it. McMullen~\cite{McMullen:polygons} is an excellent reference for the subtlelties of the geometry, dynamics, and number theory of billiards in regular $N$-gons.

\paragraph*{\bf Irrational polygons} In contrast much less is known for irrational polygons. Katok~\cite{Katok} showed that $\rho(Q,t)$ grows at most sub-exponentially for any polygon $Q$. A stretch exponential upper bound has been given by Scheglov for almost every triangle~\cite{Scheglov1} and
a polynomial lower bound for almost every right triangle~\cite{Scheglov2}, which improves the polynomial lower bounded given in~\cite{Troubetzkoy} for all polygons.
Recently Krueger-Nogueira-Troubetzkoy~\cite{KrNoTr} showed that 
$\rho(Q,t)$ grows cubically along certain subsequences for generic irrational polygons.

\paragraph*{\bf Rational lattice polygons} Our results apply more generally to \emph{rational lattice polygons} $Q$, polygons whose angles are rational multiples of $\pi$ and whose associated unfolding to a translation surface (together with the data of marked points) is a lattice surface (see \S\ref{subsec:unfolding} for precise definitions). We have:

\begin{Theorem}\label{theorem:billiardcomplexity} Let $Q$ be a rational lattice polygon. There is an (explicit) constant $c_Q$ such that \begin{equation}\label{eq:billiardcomplexity:lattice}\lim_{t \rightarrow \infty} \frac{\rho(Q, t)}{t^3} = c_Q \end{equation}
\end{Theorem}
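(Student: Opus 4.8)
The plan is to reduce the billiard complexity count to a counting problem for generalized diagonals, and then to a counting problem for saddle connections on the unfolded translation surface, weighted by \emph{combinatorial} rather than geometric length. First I would invoke the Cassaigne-Hubert-Troubetzkoy formula (\cite{CHT}) expressing $\rho(Q,t)$ in terms of $N(Q,t)$, the number of generalized diagonals of combinatorial length at most $t$ (that is, billiard trajectories between corners that cross at most $t$ sides). Under the unfolding construction (\S\ref{subsec:unfolding}), generalized diagonals in $Q$ correspond to saddle connections on the associated translation surface $(X,\omega)$ joining singularities (and marked points arising from the vertices); the combinatorial length of a generalized diagonal translates into a count of how many times the corresponding saddle connection crosses the preimages of the sides of $Q$, i.e.\ a certain piecewise-linear "crossing" functional on the holonomy vector. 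I would set this up carefully so that the problem becomes: count saddle connections $\gamma$ on the lattice surface $(X,\omega)$ with $\ell_{\mathrm{comb}}(\gamma)\le t$, where $\ell_{\mathrm{comb}}$ is comparable to (but not equal to) the Euclidean length $|\mathrm{hol}(\gamma)|$.

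Next, the key technical input is the counting result for saddle connections on lattice surfaces counted by combinatorial length, alluded to in the abstract and presumably proved earlier in the paper. The standard Veech dichotomy / Eskin-McMullen-Veech machinery gives quadratic asymptotics $N_{\mathrm{Euc}}(X,\omega,R) \sim c \pi R^2$ for counting saddle connections by Euclidean length $R$ on a lattice surface, with $c$ computable from the volume of $\SL(2,\R)/\Gamma$ where $\Gamma$ is the Veech group, together with the cusp data. The combinatorial length functional is not rotation-invariant — it depends on the direction of $\mathrm{hol}(\gamma)$ through the side-crossing pattern — so the count $N_{\mathrm{comb}}(t)$ of saddle connections with combinatorial length $\le t$ is obtained by integrating the Euclidean counting measure against the appropriate shape of the "combinatorial ball" $\{v : \ell_{\mathrm{comb}}(v) \le t\}$ in the plane. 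Because $\ell_{\mathrm{comb}}$ is homogeneous of degree one and piecewise-linear (in each direction it is a fixed multiple of Euclidean length), this region is star-shaped, so its area is $\frac12\int_0^{2\pi} r(\theta)^2\,d\theta$, and equidistribution of the saddle connection holonomies (a consequence of the lattice property, via Veech's work or the circle-averaging results of Eskin-Masur) yields $N_{\mathrm{comb}}(t) \sim c_{\mathrm{comb}}\, t^2$ with $c_{\mathrm{comb}}$ an explicit integral. Plugging into the CHT formula (which involves a further integration of $N(Q,s)$ over $s \le t$, producing the extra power of $t$) gives $\rho(Q,t) \sim c_Q t^3$.

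For the explicit constant $c_Q$ in Theorem~\ref{theorem:billiardcomplexity}, I would trace through the chain: $c_Q$ is a combination of (i) the hyperbolic covolume / Veech group data of $(X,\omega)$, (ii) the cusp widths controlling the leading term of the Euclidean saddle connection count, (iii) the precise shape of the combinatorial-length region (which encodes how many preimages of each side a trajectory in direction $\theta$ meets, hence depends on the cylinder decompositions of $(X,\omega)$ in the relevant directions), and (iv) the combinatorial constant from the CHT formula relating $\rho(Q,t)$ to the cumulative count of generalized diagonals. One subtlety is bookkeeping the marked points: the vertices of $Q$ may unfold to regular (non-singular) marked points, and generalized diagonals include trajectories between such points, so one must count saddle connections for the marked translation surface, not just the minimal stratum representative; this is exactly why the hypothesis is that the unfolding \emph{together with the marked points} is a lattice surface.

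The main obstacle I expect is establishing the uniform equidistribution needed to turn the Euclidean count into the combinatorial count with the correct leading constant — that is, showing that the saddle connection holonomy vectors, counted up to length $R$, equidistribute in angle well enough that one may integrate against the (merely star-shaped, not convex or smooth) combinatorial-length region and extract the leading-order term. On a lattice surface this should follow from Veech's equidistribution results together with the fact that the combinatorial length function, while only piecewise linear in $\theta$, has only finitely many pieces (governed by the finitely many cylinder directions that matter), so one can handle each angular sector separately and sum. A secondary difficulty is purely computational: unwinding the definition of combinatorial length through the unfolding to get a clean formula for $r(\theta)$, and then matching the resulting integral to the constants $c_N$ in Theorem~\ref{theorem:billiardcomplexity:Ngons}; this is where the parity factor $\sigma_N$ and the index-$2$ subtlety noted after that theorem will enter, and care is needed to avoid exactly the factor-of-two discrepancy flagged for $N=3,4$.
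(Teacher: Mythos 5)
Your reduction is the same as the paper's: you invoke the Cassaigne--Hubert--Troubetzkoy formula (Theorem~\ref{theorem:cht}), unfold generalized diagonals to generalized saddle connections on the unfolded surface $S_Q$ with its marked points (correctly noting that the lattice hypothesis is needed for the surface \emph{together with} the marked points, so that the orbit decomposition \eqref{eq:SCorbitQ} applies), feed this into the combinatorial-length counting theorem (Theorem~\ref{theorem:comblengthcounting}) to get quadratic asymptotics for generalized diagonals, and pick up the extra power of $t$ from the Ces\`aro sum $\sum_{n<t} n^2 \sim t^3/3$. The only bookkeeping you leave implicit is the lift multiplicity: each generalized diagonal lifts to $n_Q$ generalized saddle connections, which is where the paper's normalization $c_Q = \frac{1}{3n_Q}c^0_{\cC_Q}(S_Q)$ comes from.

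One caution about how you describe the key input. The combinatorial length $\ell_{\cC}(\gamma)=\sum_j \iota_{\cG}(\gamma,\gamma_j)$ is a geometric intersection number and is \emph{not} a functional of the holonomy vector, so your picture of a ``combinatorial ball'' $\{v:\ell_{\mathrm{comb}}(v)\le t\}$ in the plane, with the combinatorial length ``a fixed multiple of Euclidean length in each direction,'' is not literally available: saddle connections with comparable holonomy can meet the filling system quite differently, and in and near cylinder directions the crossing count genuinely deviates from any directional prediction. Bridging this is precisely the content of Theorem~\ref{theorem:comblengthcounting}: one replaces $\ell_{\cC}$ by the regularized length $\ell_{\cR}(\gamma)=\sum_j|\zone_{\gamma}\wedge\zone_j|$, which \emph{is} a homogeneous convex function of holonomy; one shows, using Dozier's equidistribution of saddle connections on the surface \cite{Dozier} plus a lattice-specific spacing estimate coming from cylinder decompositions, that the saddle connections with $|\ell_{\cC}-\ell_{\cR}|>\epsilon\,\ell_{\cG}$ form a set of quadratic growth at most $O(\epsilon L^2)$; and one then counts holonomies in the convex region $\Omegaone_{\cC}$ via Veech's Siegel--Veech theorem \cite{Veech:siegel}, not merely via angular equidistribution of holonomy vectors. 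Since you explicitly take the combinatorial counting theorem as an input, your proof of Theorem~\ref{theorem:billiardcomplexity} coincides with the paper's; but the sector-by-sector integration you sketch would not, on its own, suffice to establish that input.
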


\paragraph*{\bf Computing constants} For any particular choice of lattice surface $Q$, it is possible to compute the constant $c_Q$ using results of Veech~\cite{Veech:eisenstein} and, for example, the computer program \verb|FlatSurf|. We describe the steps required to compute this at the end of \S\ref{sec:coarse}.

\subsection{Lattice surfaces}\label{subsec:lattices} Our results on complexity for billiards in rational lattice polygons are corollaries of our combinatorial length counting results for lattice translation surfaces. A (compact, genus $g$) translation surface $S$ is a pair $S = (X, \om)$, where $X$ is a compact genus $g$ Riemann surface and $\omega$ a holomorphic $1$-form. We recall that a translation surface is called a \emph{lattice surface} if the set of derivatives $\La^+(S)$ of its (orientation-preserving)affine automorphism group $\operatorname{Aff}^+(S)$ forms a lattice in $SL(2,\R)$. We will give more background and precise definitions in \S\ref{sec:background}.

\paragraph*{\bf Quadratic asymptotics} Let $S=\left(X,\om\right)$ be a lattice translation surface, and $\cC = \{\gamma_j\}_{j=0}^{m-1}$ a \emph{filling system} of geodesic arcs (that is, $S\backslash \cC$ is a finite union of topological disks, or geometric simply connected polygons). Let $\SC(S)$ denote the set of saddle connections on $S$, and for $\gamma \in \SC(S)$, we define the \emph{combinatorial length} of $\gamma$ with respect to $\cC$ by \begin{equation}\label{eq:comblength} \ell_{\cC}(\gamma) = \sum_{j=0}^{m-1} \iota_{\cG}(\gamma,\gamma_j) \end{equation} where $\iota_{\cG}(\cdot, \cdot)$ is the \emph{geometric intersection number}. For $L >0$, let $$\SC_{\cC}(S, L) = \{\gamma \in \SC(S): \ell_{\cC}(\gamma) \le L\},$$ and $$N_{\cC}(S, L) = \# \SC_{\cC}(S, L).$$ Note that combinatorial length and therefore $N_{\cC}(S, L)$ are independent of the area of $S$. Our main result on $N_{\cC}(S, L)$ is that it has quadratic asymptotics, and that the asymptotic growth can be computed in terms of known asymptotics for \emph{geometric length counting} on the area $1$ scaling $S^{(1)} = (X, \omegaone)$ of $S$. Given a saddle connection $\gamma$, we let $\zone_{\gamma} = \int_{\gamma} \omegaone \in \C$ denote its holonomy vector, and define the \emph{geometric length} \begin{equation}\label{eq:geomlength} \ell_{\cG}(\gamma) = |\zone_{\gamma}|.\end{equation} For $L >0$, let $$\SC_{\cG}\left(\Sone, L\right) = \{\gamma \in \SC(S): \ell_{\cG}(\gamma) \le L\},$$ and $$N_{\cG}\left(\Sone, L\right) = \# \SC_{\cG}\left(\Sone, L\right).$$ 

\begin{equation}\label{eq:combconstantvolume} c(s, \cC) = \cone_{\cG}(S) \vol\left(\Omegaone_{S, \cC}\right),\end{equation} where $$\cone_{\cG}(S) = \lim_{L \rightarrow \infty} \frac{N_{\cG}(\Sone, L)}{\pi L^2}$$ is the asymptotic constant for counting by geometric length (whose existence was shown by Veech~\cite{Veech:eisenstein}, see also Vorobets~\cite{Vorobets}*{Theorem 3.13}), and $$\Omegaone_{\cC} = \left\{ z \in \C: \sum_{j=0}^{m-1} \left|z \wedge \zone_j\right| \le 1\right\},$$ where $$\zone_j = \int_{\gamma_j} \omegaone$$ is the holonomy vector of $\gamma_j$, and $|z \wedge w|$ is the area of the parallelogram spanned by $z, w \in \C$. Our main result is:

\begin{Theorem}\label{theorem:comblengthcounting} Using the above notation we have \begin{equation}\label{eq:comblengthcounting} \lim_{L \rightarrow \infty} \frac{N_{\cC}(S, L)}{L^2} = c_{\cC}(S).\end{equation}
\end{Theorem}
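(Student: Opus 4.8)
The plan is to replace the combinatorial length $\ell_{\cC}(\gamma)=\sum_{j}\iota_{\cG}(\gamma,\gamma_{j})$ by the ``linearised'' functional of the holonomy vector
\[
\widehat{\ell}_{\cC}(\gamma)\ :=\ \sum_{j=0}^{m-1}\abs{\zone_{\gamma}\wedge\zone_{j}},
\]
and then to count saddle connections whose holonomy lies in a dilate of the unit ball of $\widehat{\ell}_{\cC}$. Since $\ell_{\cC}$ and $\Nc(S,L)$ are unchanged by rescaling $\om$, we may assume $S=\Sone$ has area one, and write $z_{\gamma}=\zone_{\gamma}$, $z_{j}=\zone_{j}$. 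The map $z\mapsto\sum_{j}\abs{z\wedge z_{j}}$ is a norm on $\C\cong\R^{2}$ (a finite sum of the seminorms $z\mapsto\abs{z\wedge z_{j}}$, nondegenerate because $\cC$ filling forces the vectors $z_{j}$ to span $\C$), so $\Omegaone_{\cC}=\{z:\sum_{j}\abs{z\wedge z_{j}}\le1\}$ is a bounded, centrally symmetric convex polygon---in particular Jordan measurable---and by homogeneity $\widehat{\ell}_{\cC}(\gamma)\le L$ is the same as $z_{\gamma}\in L\cdot\Omegaone_{\cC}$.

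The technical heart is a uniform comparison $\abs{\ell_{\cC}(\gamma)-\widehat{\ell}_{\cC}(\gamma)}\le A$ valid for all $\gamma\in\SC(S)$, with $A=A(S,\cC)$; it suffices to bound $\abs{\iota_{\cG}(\gamma,\gamma_{j})-\abs{z_{\gamma}\wedge z_{j}}}$ uniformly for each $j$. Here $\abs{z_{\gamma}\wedge z_{j}}$ is the length of $\gamma$ times the transverse width of the arc $\gamma_{j}$ for the straight-line flow in the direction of $\gamma$---the natural geometric prediction for the number of intersections. The decisive structural fact is that on a lattice surface every saddle connection lies in a \emph{completely periodic} direction: a direction carrying a saddle connection is not minimal, so by the Veech dichotomy it decomposes $S$ into cylinders. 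In such a direction $\gamma$ runs along cylinder boundaries, and $\iota_{\cG}(\gamma,\gamma_{j})$ counts the points at which $\gamma_{j}$, winding transversally across the finitely many cylinders, meets the boundary arc $\gamma$; on each boundary circle these meeting points form an orbit of a circle rotation (the cylinder twist), so $\iota_{\cG}(\gamma,\gamma_{j})$ equals the predicted proportion up to the discrepancy of that orbit with respect to the sub-arc $\gamma$, together with boundary effects at the two ends of $\gamma_{j}$. I would show these error terms are bounded in terms of $S$ and $\cC$ alone, using the arithmetic constraints that the lattice property imposes on cylinder decompositions and their twists. Summing over $j$ gives the displayed bound.

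The second ingredient is a counting statement for planar regions: for every bounded Jordan-measurable $R\subset\C$,
\[
\#\{\gamma\in\SC(S):z_{\gamma}\in L\cdot R\}\ \sim\ \cone_{\cG}(S)\,\vol(R)\,L^{2}\qquad(L\to\infty).
\]
For $R$ the unit disk this is Veech's quadratic counting theorem in the present normalisation (\cite{Veech:eisenstein}; see also \cite{Vorobets}*{Theorem 3.13}), and the general case follows from the equidistribution of rescaled saddle-connection holonomies on a lattice surface---or, alternatively, by approximating $R$ from inside and out by finite unions of circular sectors and noting that the sectorial counting limits, once they are known to exist, must be rotation-invariant and additive in the angle (rotating $S$ by $\theta$ preserves all lengths and rotates every holonomy vector by $\theta$), hence equal $\tfrac{\theta}{2\pi}\,\pi\,\cone_{\cG}(S)$ for a sector of angle $\theta$. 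Granting the comparison above, $\ell_{\cC}(\gamma)\le L$ forces $\widehat{\ell}_{\cC}(\gamma)\le L+A$, i.e.\ $z_{\gamma}\in(L+A)\Omegaone_{\cC}$, while $z_{\gamma}\in(L-A)\Omegaone_{\cC}$ forces $\ell_{\cC}(\gamma)\le L$; hence
\[
\#\{\gamma:z_{\gamma}\in(L-A)\Omegaone_{\cC}\}\ \le\ \Nc(S,L)\ \le\ \#\{\gamma:z_{\gamma}\in(L+A)\Omegaone_{\cC}\},
\]
and applying the counting statement with $R=\Omegaone_{\cC}$ to both ends and dividing by $L^{2}$ gives $\Nc(S,L)/L^{2}\to\cone_{\cG}(S)\,\vol(\Omegaone_{\cC})=c_{\cC}(S)$.

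The main obstacle is the uniform length comparison: keeping the gap between the intersection count $\iota_{\cG}(\gamma,\gamma_{j})$ and its linear prediction bounded---or even just $o(\abs{z_{\gamma}})$---as $\gamma$ ranges over all saddle connections, including long ones in directions lying deep in the cusps of the Veech group. This is precisely where the lattice hypothesis is indispensable: it confines every $\gamma$ to a completely periodic direction, trading delicate Diophantine estimates for straight-line flows for the combinatorics of cylinder decompositions, and the real work is to make the resulting bound genuinely uniform over the infinitely many combinatorial types that occur---that is, to control the discrepancies of the cylinder-twist rotations that appear.
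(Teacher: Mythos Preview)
Your high-level strategy---replace $\ell_{\cC}$ by the linearised length $\widehat{\ell}_{\cC}(\gamma)=\sum_j|z_\gamma\wedge z_j|$ (the paper's \emph{regularized length} $\ell_{\cR}$), then count holonomy vectors in dilates of $\Omegaone_{\cC}$---matches the paper's exactly, and your counting step is the paper's Theorem~\ref{theorem:regularcounting}, proved via Veech's Siegel-type theorem applied to continuous compactly supported approximants of the indicator of $\Omegaone_{\cC}$.

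The genuine gap is the uniform comparison $|\ell_{\cC}(\gamma)-\ell_{\cR}(\gamma)|\le A$. You correctly flag this as the main obstacle, but the paper does \emph{not} prove any such bound, nor even an $o(\ell_{\cG}(\gamma))$ bound valid for every $\gamma$. Instead it shows that for each $\epsilon>0$ the set $I^{\epsilon,\cC}$ of saddle connections with $|\ell_{\cC}(\gamma)-\ell_{\cR}(\gamma)|\ge\epsilon\,\ell_{\cG}(\gamma)$ has upper quadratic density at most $C_{\cC}\epsilon$ (Proposition~\ref{prop:regular}, Corollary~\ref{cor:cregular}); on the complementary ``regular'' set the two lengths are $(1\pm K\epsilon)$-comparable, and combining this with Theorem~\ref{theorem:regularcounting} and Lemma~\ref{lemma:coarse} gives the limit after letting $\epsilon\to0$. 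The density bound is assembled from three pieces: Dozier's sector estimate (Lemma~\ref{lemma:sectors}) handles saddle connections nearly parallel to $\gamma_0$; Dozier's equidistribution of saddle-connection arc-measures (Lemma~\ref{lemma:nonequidist}) shows that those failing to equidistribute against a fixed mean-zero continuous function have density zero; and a spacings lemma (Lemma~\ref{lemma:spacings})---the only place the lattice property enters---converts equidistribution against a bump function near $\gamma_0$ into control of $\iota_{\cG}(\gamma,\gamma_0)$ (Lemma~\ref{lemma:intersections}). Your cylinder-twist program would instead require uniform discrepancy bounds for the boundary rotations across \emph{all} periodic directions, with cylinder moduli unbounded near each cusp; making this uniform is at least as delicate as the density argument the paper actually carries out, and it is not clear a genuine $O(1)$ bound holds.
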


\noindent  We will show in \S\ref{subsec:omega} that $\Omegaone_{\cC}$ is always a convex, centrally symmetric polygon with $2(\#\cC) = 2m$ vertices situated in symmetric pairs on the lines $L_j = \{tz_j\}_{t \in \R}$, see Figure~\ref{fig:omegac}.

\begin{figure}
    \centering
\includegraphics[width=0.7\textwidth]{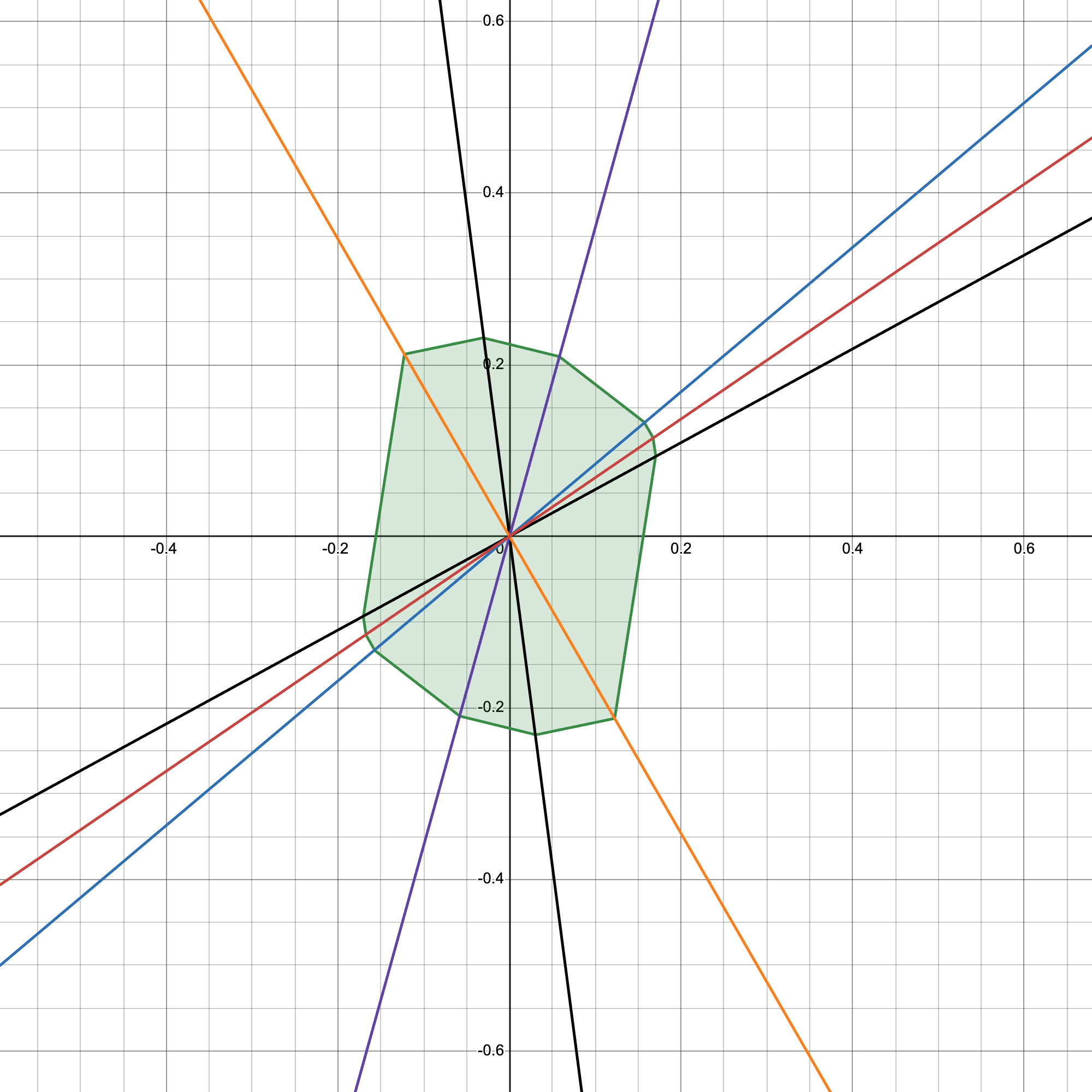}
    \caption{An example region $\Omegaone_{S, \cC}$}
    \label{fig:omegac}
\end{figure}

\paragraph*{\bf Organization} We give background on translation surfaces, lattice surfaces, and billiards in \S\ref{sec:background}, then prove Theorem~\ref{theorem:billiardcomplexity} (assuming Theorem~\ref{theorem:comblengthcounting}) in \S\ref{sec:billiards}. We prove Theorem~\ref{theorem:comblengthcounting} in \S\ref{sec:latticesurfaces}, and show how to prove Theorem~\ref{theorem:billiardcomplexity:Ngons} by computing the constants $c_N$, following a strategy of Veech~\cite{Veech:eisenstein}, in  \S\ref{sec:constants:Ngons}. 

\paragraph*{\bf Acknowledgments} We thank Pierre Arnoux, Artur Avila, Nicolas Bedaride, Julien Cassaigne, Maurice Reichert, Bruce Reznick, and Stefan Steinerberger for useful discussions. We thank Howard Masur for his careful reading of the paper and suggestions to improve the exposition in \S\ref{sec:latticesurfaces}. This work was initiated at Centre International de Rencontres Math\'ematiques (CIRM) in Luminy, as part of the Chaire Jean Morlet program in Autumn 2023, and completed in Fall 2024 at CIRM and at Aix-Marseille Universit\'e (AMU). We thank the wonderful staff at CIRM and AMU for their hospitality and the ideal working conditions, and the Chaire Jean Morlet program for the opportunity to start this collaboration. JSA was partially supported by the United States National Science Foundation (DMS 2404705), the Pacific Institute for the Mathematical Sciences, and the Victor Klee Faculty Fellowship at the University of Washington. 

\section{Counting problems for translation surfaces}\label{sec:background} \paragraph*{\bf Translation surfaces} In this section, we recall background on translation surfaces and associated counting problems. Our exposition will draw on ~\cites{AthreyaMasurBook, EskinSurvey}. A compact, genus $g$ \emph{translation surface} $S$ is a pair $S = (X, \omega)$, where $X$ is a compact genus $g$ Riemann surfaces and $\omega$ a non-zero holomorphic $1$-form. The one-form $\omega$ determines a singular flat metric on $X$, with cone-type singularities at the zeros of $\omega$, with a zero of order $\alpha$ corresponding to cone point of angle $2\pi(\alpha+1)$, and the orders of the zeros must add to $2g-2$. Integrating $\omega$ away from singular points gives an atlas of charts to $\mathbb C$ whose transition maps are translations, leading to the term translation surface.

\paragraph*{\bf Area} 

The \emph{area} of the translation surface $S= (X, \omega)$ is given by $$\area(S) = \frac{i}{2} \int_{X} \omega \wedge \overline{\omega}.$$ For $a >0$, we will write $S^{(a)} = (X, \omega^{(a)}),$ with $$\area\left(S^{(a)}\right) = \frac{i}{2} \int_{X} \omega^{(a)} \wedge \overline{\omega^{(a)}} = a.$$ Note that for $t \in \C$, $$\area(tS) = \area((X, t\omega)) = |t|^2 \area(X, \omega).$$

\paragraph*{\bf Normalizations} As we will discuss further in \S\ref{subsec:areas}, the constant $$c_{\cC}(S) =  \cone_{\cG}(S) \vol\left(\Omegaone_{S, \cC}\right)$$ in Theorem \ref{theorem:comblengthcounting} does not in fact depend on the normalization of the area of the translation surface, although the factors $ \cone_{\cG}(S)$ and $\vol\left(\Omegaone_{S, \cC}\right)$ do. Thus, since we compute $c_{\cC}(S)$ by computing these factors, it will be important for us to specify which normalization of the surface we are using. For the most part, we will use the  standard area one normalization $\Sone$ for the translation surface $S$. On the other hand, in \S\ref{sec:constants:Ngons}, we will be working with regular $N$-gons for which 
the standard normalization is to assume that the vertices are roots of unity $e^{2\pi i j/N}, j = 0, \ldots N-1$. We will be as explicit as possible to avoid any confusion on normalizations.

\paragraph*{\bf Saddle connections, cylinders, holonomy vectors, and area} A \emph{saddle connection} is a geodesic segment with respect to this flat metric which starts and ends at a zero of $\omega$, with no zeros in its interior. A \emph{core curve} of a cylinder is a periodic non-singular geodesic with respect to the flat metric. If $\gamma$ is any arc on the surface (in particular a saddle connection or a cylinder core curve), the associated \emph{holonomy vector} on the area $a$ surface is defined to be \begin{equation}\label{eq:def:holonomy} z^{(a)}_{\gamma} := \int_{\gamma} \omega^{(a)} = \ell_{\cG}(\gamma) e^{i\theta_{\gamma}}.\end{equation} We will write $\ell_{\cG}(\gamma) = |z^{(1)}_{\gamma}| \in \R^+$ for the \emph{geometric length} of $\gamma$ on the area $1$ scaling, and $\theta_{\gamma}$ is the direction of $\gamma$ (which does not depend on scaling.  Note that $z^{(a)}_{\gamma} = \sqrt{a} \zone_{\gamma}$. Let $\SC(S)$ denote the set of saddle connections of a translation surface $S$ (note that this collection does not depend on the scaling of the surface).

\subsection{Strata of translation surfaces}\label{subsec:strata} Let $\OmMg$ denote the moduli space of compact genus $g$ translation surfaces up to biholomorphisms. That is, $S_1 = (X_1, \omega_1) \sim S_2 = (X_2, \omega_2)$ if there is a biholomorphism $f: X_1 \rightarrow X_2$ with $f^* \omega_2 = \omega_1$. We can write $$\OmMg = \bigsqcup_{\alpha} \OmMg(\alpha),$$ where the union is taken over the set of integer partitions $\alpha$ of $2g-2$, and each $\OmMg(\alpha)$ denotes the \emph{stratum} of translation surfaces with zero pattern $\alpha$. Each stratum has at most 3 connected components~\cite{KontsevichZorich}. The group $GL^+(2,\R)$ acts on each $\OmMg(\alpha)$, via post-composition by $\R$-linear maps on $\C$ with the translation atlas of charts to $\C$. The action of the subgroup $SL(2,\R)$ preserves the area (and thus the locus $\mathcal H(\alpha)$ of area $1$ surfaces). We denote the stabilizer (also known as the \emph{Veech group}) of a translation surface by $$SL(S) = \{g \in SL(2,\R): g \cdot S = S\}.$$ This stabilizer group is generically trivial, and always a discrete subgroup of $SL(2,\R)$. This group is the group of \emph{derivatives} of the group $\Aff^+(S)$ of (area-preserving) \emph{affine diffeomorphisms} of the translation surface $S$ (which is also sometimes referred to as the Veech group).

\subsection{Lattice surfaces}\label{subsec:lattice:background} A \emph{lattice surface} (also known as a \emph{Veech surface}) is a translation surface for which $SL(S)$ is a \emph{lattice}, that is a discrete subgroup of finite volume. The lattices $\Gamma$ in $SL(2,\R)$ which arise in this way are always non-uniform, that is, the quotient $SL(2,\R)/\Gamma$ is finite volume but non-compact. References for background on lattice surfaces include~\cite{AthreyaMasurBook}*{\S 7}, Hubert-Schmidt~\cite{HubertSchmidt} and McMullen's recent survey~\cite{McMullen:survey}. An important result of Veech~\cite{Veech:eisenstein} is that for a lattice surface $S$, there is a finite collection $\gamma_1, \ldots, \gamma_j \in \SC(S)$ such that \begin{equation}\label{eq:SCorbit}\SC(S) = \bigcup_{i=1}^j \Aff^+(S) \cdot \gamma_i,\end{equation} that is, the set of saddle connections is a finite union of orbits of the affine diffeomorphism group.

\subsection{Billiards and unfolding}\label{subsec:unfolding} Recall that a polygon $Q \subset \C$ is \emph{rational} if its angles are rational multiples of $\pi$. Equivalently, the group $G(Q)$ generated by reflections in lines through $0$ parallel to the sides of $Q$ is finite.  Via an unfolding process due independently to Fox-Kershner~\cite{FoxKershner} and Zemljakov-Katok~\cite{ZemljakovKatok}, the billiard flow on $Q$ is equivalent to the straight line flow on an associated translation surface $S_Q = (X_Q, \omega_Q)$, tiled by $n(Q) = \#G(Q)$ copies of $Q$. Periodic billiard trajectories on $Q$ result in \emph{cylinders} on $S_Q$, and \emph{generalized diagonals} (billiard trajectories connecting corners of $Q$) on $Q$ lift to \emph{generalized saddle connections} on $S_Q$, flat geodesics connecting pairs of \emph{generalized singular points}: either zeros of $\omega_Q$ or \emph{marked points} arising as non-singular lifts of corners of $Q$.
A rational polygon $Q$ is a \emph{rational lattice polygon} if the group $SL_0(S_Q) = D\Aff^+_{0}(S_Q)$ of derivatives of affine diffeomorphisms of $S_Q$ preserving the set of marked points arising as lifts of corners of $Q$ is a lattice in $SL(2, \R)$. There is a potentially subtle point here: $S_Q$ (without the data of marked points) can be a lattice surface without $Q$ being a rational lattice polygon. We write $SC_0(S_Q)$ for the set of generalized saddle connections on $S_Q$, there is, by \eqref{eq:SCorbit}, a finite collection $\gamma_1, \ldots, \gamma_j \in \SC_0(S_Q)$ \begin{equation}\label{eq:SCorbitQ}\SC_0(S_Q) = \bigcup_{i=1}^j \Aff^+_0(S) \cdot \gamma_i.\end{equation}

\subsection{Counting results}\label{subsec:counting} A natural counting problem is to understand the growth of the set of saddle connections by (geometric) length. Let $$\SC_{\cG}(S, L) = \{\gamma \in \SC(S): \ell_{\cG}(\gamma) \le L\},$$ and $$N_{\cG}(S, L) = \# \SC_{\cG}(S, L).$$ Masur showed that for \emph{every} translation surface $S$ there are quadratic upper and lower bounds for $N_{\cG}(S, L)$, that is, constants $0< c_1(S) \le c_2(S) < \infty$ so that for all $L$, $$c_1(S) L^2 \le N_{\cG}(S, L) \le c_2(S).$$ 

\paragraph*{\bf Quadratic asymptotics} As discussed in the introduction, Veech~\cite{Veech:eisenstein} showed that if $S$ is a lattice surface, there is a constant $c_{\cG}(S)$ such that $$\lim_{L \rightarrow \infty} \frac{N_{\cG}(S, L)}{\pi L^2} = c_{\cG}(S),$$ and subsequently~\cite{Veech:siegel} showed $L^1$-counting results for the average of $N_{\cG}(S, L)$ as $S$ varies over a stratum $\hh(\alpha)$ of area $1$ translation surfaces. Refining Veech's~\cite{Veech:siegel} strategy, Eskin-Masur~\cite{EskinMasur} showed that for every stratum $\hh= \hh(\alpha)$ there is a constant $c_{\cG}(\hh)$ for \emph{almost every} (with respect to the natural Masur-Smillie-Veech measure $\mu_{\hh}$ on the stratum) $S \in \hh$, $$\lim_{L \rightarrow \infty} \frac{N_{\cG}(S, L)}{\pi L^2} = c_{\cG}(\hh).$$ Eskin-Marklof-Morris~\cite{EskinMarklofMorris} extended Veech's lattice surface results to \emph{branched covers} of lattice surfaces. 

\paragraph*{\bf Billiards} The Eskin-Masur~\cite{EskinMasur} result does not apply to billiards, since the set of surfaces arising from billiards is of measure $0$ in each stratum. Since there are certain families of billiards whose unfoldings yield lattice translation surfaces, and branched covers of lattice translation surfaces, the Veech and Eskin-Marklof-Morris asymptotic counting results do apply to these families of billiards.
\paragraph*{\bf Periodic orbits in regular polygons} Veech~\cite{Veech:polygon} computed the quadratic asymptotics for counting \emph{periodic} trajectories by geometric length for billiards in regular polygons by considering the unfoldings to lattice translation surfaces. For more recent interesting results on the geometry, algebra, and number theory of periodic billiard trajectories in lattice polygons see the works of McMullen~\cites{McMullen:heights, McMullen:polygons}.

\paragraph*{\bf Weak quadratic asymptotics} The most general results on billiard arise from results on \emph{weak quadratic asymptotics} for all translation surfaces. Eskin-Mirzakhani-Mohammadi~\cite{EsMiMo}*{Theorem 2.6 and Theorem 2.12} showed that for every translation surface $S$ (and in particular those arising from billiards), there is a constant $c_{\cG}(S)$ such that $$\lim_{L \rightarrow \infty} \frac{1}{\pi L} \int_0^L N_{\cG}(S, e^t) e^{-2t} dt = c_{\cG}(S).$$ This kind of weak quadratic asymptotics was also studied for billiards in tables whose angles are integer multiples of $\pi/2$ by Athreya-Eskin-Zorich~\cite{AEZ}.

\section{Billiards, unfolding, and complexity}\label{sec:billiards}

\subsection{Complexity and generalized diagonals}\label{subsec:complexity}  We now show how asymptotics of counting saddle connections by combinatorial length can yield information about the asymptotic complexity of billiards, using in a key way a result of Cassaigne-Hubert-Troubetzkoy~\cite{CHT}*{Theorem 1.1} (subsequently generalized by Bedaride~\cite{Bedaride}*{Proposition 2}), which applies to general billiards in polygons. Let $Q$ be any Euclidean polygon, and recall that $\rho(Q, t) = \# \Ell(Q, t)$ is the number of words of length $t$ in the language of the billiard coded by collisions with the sides. Let $N_{\cC}(Q, n)$ denote the number of \emph{generalized diagonals} of combinatorial length at most $n$, where a generalized diagonal is a billiard trajectory starting and ending at a corner, and the combinatorial length is given by the number of bounces. 
\begin{Theorem}\label{theorem:cht}\cite{CHT}*{Theorem 1.1}\cite{Bedaride}*{Proposition 2} With notation as above, \begin{equation}\label{eq:CHT} \rho(Q, t) = \sum_{n=0}^{t-1} N_{\cC}(Q, n).
    \end{equation}
\end{Theorem}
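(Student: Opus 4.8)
The plan is to express $\rho(Q,t)$ as the number of cells of the $t$-step refinement of the billiard phase space by the side-labeling, and then to telescope. Let $M$ be the billiard phase space, $T\colon M\to M$ the billiard map, and $\mathcal P=\{P_a\}_{a\in\mathcal A}$ the partition of $M$ according to which side the base point of an orbit lies on. A word $w=w_0\cdots w_{t-1}$ is realized in $\Ell(Q)$ precisely when the atom $\bigcap_{k=0}^{t-1}T^{-k}P_{w_k}$ of the refinement $\mathcal P^{(t)}:=\bigvee_{k=0}^{t-1}T^{-k}\mathcal P$ is nonempty, so $\rho(Q,t)$ is the number of nonempty atoms of $\mathcal P^{(t)}$. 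The boundary of each atom inside $M$ lies in the singular set $\Sigma_t$ consisting of orbits that strike a corner within their first $t$ bounces; since $\Sigma_t$ is a finite union of arcs, $\rho(Q,t)$ equals the number of connected components of $M\setminus\Sigma_t$ --- once one knows that distinct components cannot carry the same itinerary, which for non-convex $Q$ is itself a point requiring proof.

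I would then analyze the passage from level $t$ to level $t+1$. The set $\Sigma_{t+1}\setminus\Sigma_t$ consists of the orbits whose first corner-collision among the first $t+1$ bounces occurs exactly at bounce $t$; each of its connected components is an embedded arc carrying a fixed length-$t$ itinerary and terminating at a fixed corner, distinct components are pairwise disjoint (an orbit cannot hit two distinct corners at the same instant), and each component has both endpoints on $\partial M\cup\Sigma_t$. Hence adjoining these arcs to the existing arrangement splits exactly one old cell in two for each arc, so $\rho(Q,t+1)-\rho(Q,t)$ is precisely the number of these new arcs. The geometric heart of the matter is the identification of that number with $N_{\cC}(Q,t)$: the endpoints of the new arcs lying on $\partial M$ are the generalized diagonals, recovered by letting the free endpoint of an arc run into a corner, and one must trace how a generalized diagonal of combinatorial length $n$ contributes fresh arcs at each level $t>n$. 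Together with the base case --- which identifies the generalized diagonals of combinatorial length $0$ with the sides, so that $\rho(Q,1)=\#\mathcal A=N_{\cC}(Q,0)$ --- this yields the telescoping identity $\rho(Q,t)=\sum_{n=0}^{t-1}N_{\cC}(Q,n)$.

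The hard part is exactly this last bookkeeping step: seeing that the number of cells created at level $t$ equals $N_{\cC}(Q,t)$, counting all generalized diagonals of combinatorial length \emph{at most} $t$ rather than merely those of length exactly $t$, which forces one to understand how an older generalized diagonal keeps generating arcs at every subsequent level and to pair arc-endpoints with generalized diagonals without over- or under-counting. One must also dispose of, or account for, several pathologies that arise for general polygons: atoms of $\mathcal P^{(t)}$ that are disconnected, so that the notions of cell and of word must be reconciled; arcs that wind repeatedly around $\partial Q$; and degenerate limits in which a perturbed generalized diagonal meets a corner at an earlier bounce. This analysis is carried out in Cassaigne-Hubert-Troubetzkoy~\cite{CHT}*{Theorem 1.1}, and Bedaride~\cite{Bedaride}*{Proposition 2} gives a streamlined and more general treatment; for our purposes we simply invoke their result.
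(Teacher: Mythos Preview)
Your proposal is correct and aligns with the paper's treatment: the paper does not prove Theorem~\ref{theorem:cht} at all but simply quotes it from~\cite{CHT}*{Theorem 1.1} and~\cite{Bedaride}*{Proposition 2}, and your final sentence does exactly the same. The expository sketch you provide of the cell-counting and telescoping argument is a reasonable outline of the original proof in~\cite{CHT}, and you are right to flag the bookkeeping step (that each level creates $N_{\cC}(Q,t)$ new cells, not merely the diagonals of length exactly $t$) and the connectedness issues as the genuine content; but since both you and the paper ultimately defer to the cited references, there is nothing further to compare.
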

\paragraph*{\bf Convexity} This is stated in~\cite{CHT} for \emph{convex} $Q$. Bedaride~\cite{Bedaride} extended it to non-convex $Q$ and non-simple connected polygons,
however his version has a bounded error term. He gave further applications to understanding the complexity of billiards in polyhedra. As noticed in \cite{KrNoTr} the original proof in \cite{CHT} works for non-convex polygons (without an error term).

\paragraph*{\bf From generalized diagonals to complexity}  We now show how Theorem~\ref{theorem:billiardcomplexity} will follow from combining Theorem~\ref{theorem:comblengthcounting} and Theorem~\ref{theorem:cht}. Let $Q$ be a rational lattice polygon, and $S_Q$ the lattice surface associated to its unfolding, tiled by $n_Q$ copies of $Q$, and $\cC_Q$ the collection of arcs on $S_Q$ arising from lifting the sides of $Q$, which by construction decomposes $S_Q$ into a finite union of $n_Q$ copies of $Q$, and is thus filling. As discussed in \S\ref{subsec:unfolding}, while some corners of $Q$ may lift to marked points (as opposed to true singular points on $S_Q$), we will refer to geodesic trajectories connecting these generalized singular points as generalized saddle connections on $S_Q$, and \eqref{eq:SCorbitQ} allows us to apply Theorem~\ref{theorem:comblengthcounting} to the set $SC_0(S_Q)$ of generalized saddle connections arising as lifts of generalized diagonals, so there is a constant $c^0_{\cC_Q}$ such that \begin{equation}\label{eq:combQ}\lim_{L \rightarrow \infty} \frac{N^0_{\cC_Q}(S_Q, L)}{L^2} = c^0_{\cC_Q}(S_Q).\end{equation} Since each generalized diagonal on $Q$ lifts to $n_Q$ generalized saddle connections, we have \begin{equation}\label{eq:combQ:billiards}\lim_{n \rightarrow \infty} \frac{N_{\cC}(Q, n)}{n^2} = \frac{1}{n_Q}c^0_{\cC_Q}(S_Q).\end{equation} Putting $c_Q = \frac{1}{3n_Q} c^0_{\cC_Q}(S_Q),$ and combining Theorem~\ref{theorem:cht} with the fact that $$\lim_{t \rightarrow \infty} \frac{1}{t^3} \sum_{n=0}^{t-1} n^2 = \frac{1}{3},$$ we obtain Theorem~\ref{theorem:billiardcomplexity}.

\paragraph*{\bf Computing $c_Q$} To explicitly compute $c_Q$, we can see it suffices to compute $c^0_{\cC_Q}$. Using \eqref{eq:SCorbitQ}, this reduces to finding the saddle connections $\gamma_1, \ldots, \gamma_j$ in the decomposition of the set of saddle connections into finitely many orbits of the affine diffeomorphism group. This can be done by considering a finite (corresponding to the cusps of the affine diffeomorphism group) collection of directions on the lattice surface $S_Q$ in which the surface decomposes into cylinders with saddle connections on their boundaries. For each of these saddle connections, we can compute the asymptotics for its orbit, using a formula of Veech~\cite{Veech:siegel}*{16.8}. Of course, implementing this strategy in any given particular situation can be quite involved, as we will see in our discussion of computing the constants $c_N$ for regular $N$-gons in \S\ref{sec:constants:Ngons} below.

\paragraph*{\bf Weak quadratic asymptotics and averaging} We note that while both weak quadratic asymptotics for \emph{geometric counting} and the relationship between combinatorial counting and complexity are given by a kind of C\'esaro averaging, weak quadratic asymptotics are not sufficient for us to obtain asymptotic combinatorial complexity results.

\section{Lattice surfaces and regularized length}\label{sec:latticesurfaces}
\paragraph*{\bf Strategy} In this section, we fix a lattice surface $S$, and prove Theorem~\ref{theorem:comblengthcounting}. We will show, using the no small (virtual) triangles property of lattice surfaces (introduced by Smillie-Weiss~\cite{SmillieWeiss}) and equidistribution results of Dozier~\cite{Dozier}, that for most saddle connection $\gamma$, we can replace combinatorial length $\ell_{\cC}(\gamma)$ by the \emph{regularized combinatorial length} (or simply \emph{regularized length}) \begin{equation}\label{eq:regularizedlength} \ell_{\cR}(\gamma) = \sum_{j=1}^k |\zone_{\gamma} \wedge \zone_j|.\end{equation} 
Here, it is crucial that we are considering the area $1$ surfaces $\Sone$ (otherwise we would need to divide by $\area(S)$ in \eqref{eq:regularizedlength}). Note that if we write $\zone_{\gamma} = \ell_{\cG}(\gamma)e^{i\theta(\gamma)}, \zone_j = \ell_{\cG}(\gamma_j)e^{i\theta(\gamma_j)},$ $$|\zone_{\gamma} \wedge \zone_j|= \ell_{\cG}(\gamma)\ell_{\cG}(\gamma_j) \left|\sin(\theta(\gamma)-\theta(\gamma_j))\right|.$$ Let $$\SC_{\ast}(S, L) = \{\gamma \in \SC(S): \ell_{\ast}(\gamma) \le L\},$$ where $\ast$ can be $\cG, \cC, \cR$, and define $$N_{\ast}(S, L) = \# \SC_{\ast}(S, L).$$ We will show:
\begin{equation}\label{eq:combregular}
    \lim_{L \rightarrow \infty} \frac{N_{\cC}(S, L)}{L^2} = \lim_{L \rightarrow \infty} \frac{N_{\cR}(S, L)}{L^2} = c_{\cC}(S).
\end{equation}

\subsection{Coarse bounds}\label{sec:coarse} To implement this strategy, we need several lemmas (only one of which uses the lattice property of the translation surface $S$). We first record, without proof, an elementary lemma giving coarse bounds between our three notions of length. 

\begin{lemma}\label{lemma:coarse} There is a $K = K(S, \cC) \geq 1$ such that for all $\gamma \in \SC(S)$, \begin{equation}\label{eq:coarse} \frac{1}{K} \ell_{\cG}(\gamma) \le \ell_{\cC}(\gamma), \ell_{\cR}(\gamma) \le K \ell_{\cG}(\gamma)
\end{equation}
\end{lemma}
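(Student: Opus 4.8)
The plan is to prove, separately, the four inequalities packaged into \eqref{eq:coarse}, working throughout with the area-one representative $\Sone$. Three of them are soft consequences of the finiteness of the combinatorial data; only the upper bound $\ell_{\cC}(\gamma)\le K\ell_{\cG}(\gamma)$ needs a little flat geometry, and nothing in the argument uses the lattice property of $S$. Fix notation: $P_1,\dots,P_r$ are the closed polygons of the decomposition $\Sone\setminus\cC$, $D=\max_i\operatorname{diam}(P_i)$, and $s_0>0$ is the length of a shortest saddle connection of $\Sone$ (a genuine positive minimum, since $N_{\cG}(\Sone,1)<\infty$ by Masur's bound). At the end one takes $K$ to be the largest of the finitely many constants produced in the steps below.

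First, the comparison $\ell_{\cR}\asymp\ell_{\cG}$. Writing $\zone_\gamma=\ell_{\cG}(\gamma)e^{i\theta(\gamma)}$ and $\zone_j=\ell_{\cG}(\gamma_j)e^{i\theta(\gamma_j)}$ gives $\ell_{\cR}(\gamma)=\ell_{\cG}(\gamma)\,f(\theta(\gamma))$ with $f(\theta)=\sum_j\ell_{\cG}(\gamma_j)\abs{\sin(\theta-\theta(\gamma_j))}$ continuous and $\pi$-periodic on $\R$. The upper bound $\ell_{\cR}(\gamma)\le\bigl(\sum_j\ell_{\cG}(\gamma_j)\bigr)\ell_{\cG}(\gamma)$ follows from $\abs{\sin}\le1$, and the lower bound reduces to $\min f>0$: if $f(\theta)=0$ then every $\gamma_j$ is parallel to direction $\theta$, whereas a family of pairwise parallel geodesic arcs cannot be filling (each complementary region would develop to a closed Euclidean polygon all of whose sides point in one direction). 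Second, for $\ell_{\cG}(\gamma)\le K\ell_{\cC}(\gamma)$: cutting $\gamma$ at its $\ell_{\cC}(\gamma)$ transverse intersections with $\cC$ realizes it as a concatenation of $\ell_{\cC}(\gamma)+1$ sub-arcs, each contained in a single $P_i$ and hence of length $\le D$, so $\ell_{\cG}(\gamma)\le(\ell_{\cC}(\gamma)+1)D\le2D\,\ell_{\cC}(\gamma)$ whenever $\ell_{\cC}(\gamma)\ge1$. This covers all $\gamma\in\SC(S)$ except the finitely many lying in the one-skeleton $\cC$ or inside a single $P_i$ — the saddle connections with $\ell_{\cC}(\gamma)=0$ — a harmless exception which does not affect the asymptotic count in Theorem~\ref{theorem:comblengthcounting}.

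The substantive inequality is $\ell_{\cC}(\gamma)\le K\ell_{\cG}(\gamma)$. Cut $\gamma$ at its crossings with $\cC$ into chords $\sigma$ of the $P_i$; fix a small $\delta>0$ and call $\sigma$ \emph{long} if $|\sigma|\ge\delta$ and \emph{short} otherwise. There are at most $\ell_{\cG}(\gamma)/\delta+1$ long chords. If $\delta$ is below the minimal distance between non-adjacent sides of a polygon $P_i$, a short chord must run between two \emph{adjacent} sides, meeting at a cone point $w$; comparing the chord with the corner it cuts off at $w$ — using that the interior angles of the $P_i$ are bounded away from $0$ — forces the chord to lie in $B(w,c\delta)$ for a universal constant $c$, and one shrinks $\delta$ so these balls are pairwise disjoint. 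Along any single excursion of $\gamma$ into one ball $B(w,c\delta)$, $\gamma$ is a straight geodesic segment avoiding $w$, hence (developing the cone at $w$) turns about $w$ through total angle strictly less than $\pi$, so it crosses at most $\lceil\pi/\beta_w\rceil$ of the arcs of $\cC$ emanating from $w$, where $\beta_w>0$ is the least angular gap between consecutive such arcs; moreover $\gamma$ cannot leave and re-enter $\bigcup_w B(w,c\delta)$ without travelling a definite distance, so it performs $O(\ell_{\cG}(\gamma))$ such excursions. Adding up, $\ell_{\cC}(\gamma)\le C_1\ell_{\cG}(\gamma)+C_2$, and the additive term is absorbed using $\ell_{\cG}(\gamma)\ge s_0$.

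The only step requiring genuine care is this last one, and the one delicate point inside it is controlling how often a single straight geodesic can repeatedly clip the decomposition at a high-order cone point — settled by the two elementary flat-geometry facts used above: a short corner-chord stays near its vertex (because the polygon angles are bounded below), and a straight segment subtends angle less than $\pi$ at any point off it, so it meets only boundedly many of the arcs fanning out from a cone point. Everything else reduces to finiteness of the decomposition data, which is presumably why the lemma is recorded without proof.
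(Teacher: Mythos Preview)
The paper records Lemma~\ref{lemma:coarse} explicitly \emph{without proof}, calling it elementary, so there is no argument of the authors to compare against; you were in effect asked to supply what they omitted, and you have done so correctly.

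Your treatment of $\ell_{\cR}\asymp\ell_{\cG}$ is exactly right: writing $\ell_{\cR}(\gamma)=\ell_{\cG}(\gamma)f(\theta(\gamma))$ with $f$ continuous on the circle, the filling hypothesis is precisely what forces $\min f>0$. The bound $\ell_{\cG}\le(\ell_{\cC}+1)D$ via polygon diameters is also correct, and your caveat is well taken: as literally stated the inequality $\frac{1}{K}\ell_{\cG}(\gamma)\le\ell_{\cC}(\gamma)$ fails on the finitely many saddle connections with $\ell_{\cC}(\gamma)=0$ (edges of $\cC$ and diagonals of the $P_i$). This is a genuine if harmless imprecision in the paper's statement, since the lemma is only used to pass between the counting functions $N_{\cG},N_{\cC},N_{\cR}$ at large scale.

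Your argument for the substantive direction $\ell_{\cC}\le K\ell_{\cG}$ is also sound. It can be streamlined slightly: once you know (i) a short chord lies in some $B(w,c\delta)$, which follows from the law of sines in the corner triangle and needs only that the interior angles of the $P_i$ are bounded away from $0$, and (ii) $\delta$ is small enough relative to the systole $s_0$ that distinct balls $B(w,c\delta)$ are separated by more than $\delta$, it follows that any maximal run of consecutive short chords is confined to a single ball. Within that ball the developed straight segment subtends angle $<\pi$ at $w$, hence crosses at most $M:=\max_w\lceil\pi/\beta_w\rceil$ arcs of $\cC$. Between runs there is at least one long chord of length $\ge\delta$, so the total chord count is $\le(M+1)(\ell_{\cG}(\gamma)/\delta+1)$, and the additive constant is absorbed using $\ell_{\cG}(\gamma)\ge s_0$. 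This makes the separate ``definite distance between excursions'' clause unnecessary. One small caution: your corner estimate implicitly assumes the $P_i$ are convex; in the general filling setting one can first pass to a triangulating refinement of $\cC$ (which only increases $\ell_{\cC}$) to reduce to that case.
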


\begin{Cor}\label{cor:coarsecounting} We have  $$\frac{\pi}{K^2} c_{\cG}(S) \le \liminf_{L \rightarrow \infty} \frac{N_{\cC}(S, L)}{L^2},  \liminf_{L \rightarrow \infty} \frac{N_{\cR}(S, L)}{L^2}$$ and $$\pi K^2 c_{\cG}(S) \geq \limsup_{L \rightarrow \infty} \frac{N_{\cC}(S, L)}{L^2},  \limsup_{L \rightarrow \infty} \frac{N_{\cR}(S, L)}{L^2}$$ 
\end{Cor}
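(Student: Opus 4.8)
The plan is to derive Corollary~\ref{cor:coarsecounting} purely formally from Lemma~\ref{lemma:coarse} together with the known existence of the geometric counting limit $\lim_{L\to\infty} N_{\cG}(S,L)/(\pi L^2) = c_{\cG}(S)$ due to Veech. The mechanism is the obvious monotonicity: a two-sided comparison of length functions induces a two-sided sandwich of the sublevel sets $\SC_{\ast}(S,L)$, and hence of the counting functions, after rescaling the radius $L$ by the factor $K$.

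Concretely, first I would use the bound $\ell_{\cC}(\gamma) \le K\,\ell_{\cG}(\gamma)$ from Lemma~\ref{lemma:coarse}: it shows that $\ell_{\cG}(\gamma) \le L/K$ forces $\ell_{\cC}(\gamma) \le L$, so $\SC_{\cG}(S, L/K) \subseteq \SC_{\cC}(S, L)$ and therefore $N_{\cG}(S, L/K) \le N_{\cC}(S, L)$. Dividing by $L^2$ and writing $L^2 = K^2 (L/K)^2$ gives
$$\frac{N_{\cC}(S,L)}{L^2} \ \ge\ \frac{1}{K^2}\cdot\frac{N_{\cG}(S, L/K)}{(L/K)^2},$$
and letting $L\to\infty$ (so that $L/K\to\infty$) the right-hand side converges to $\frac{1}{K^2}\pi c_{\cG}(S)$, yielding $\liminf_{L\to\infty} N_{\cC}(S,L)/L^2 \ge (\pi/K^2) c_{\cG}(S)$. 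Symmetrically, the other half of Lemma~\ref{lemma:coarse}, $\ell_{\cG}(\gamma) \le K\,\ell_{\cC}(\gamma)$, gives $\SC_{\cC}(S,L)\subseteq \SC_{\cG}(S,KL)$, hence $N_{\cC}(S,L) \le N_{\cG}(S,KL)$, and dividing by $L^2 = (KL)^2/K^2$ and taking $\limsup$ produces $\limsup_{L\to\infty} N_{\cC}(S,L)/L^2 \le \pi K^2 c_{\cG}(S)$. The argument for $N_{\cR}(S,L)$ is word-for-word identical, using the bounds for $\ell_{\cR}$ that Lemma~\ref{lemma:coarse} supplies in exactly the same form.

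I do not expect any real obstacle: the content is entirely in Lemma~\ref{lemma:coarse} and in Veech's quoted theorem. The only points requiring a moment's care are bookkeeping ones --- making sure the normalization is consistent, i.e.\ that $N_{\cG}(S,L)$ here refers to counting on the area-one surface $\Sone$ (the convention under which $\ell_{\cG}$ and $\cone_{\cG}$ are defined), so that the quoted limit is exactly $\pi c_{\cG}(S)$ --- and noting that reparametrizing the radius by a fixed factor $K$ does not change the value of the geometric counting limit. Everything else is the elementary observation that the $\liminf$ and $\limsup$ of $L^2$-normalized counts are monotone under inclusion of sublevel sets.
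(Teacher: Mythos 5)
Your argument is correct and is exactly the intended one: the two-sided bounds of Lemma~\ref{lemma:coarse} give the sublevel-set inclusions $\SC_{\cG}(S,L/K)\subseteq \SC_{\cC}(S,L)\subseteq \SC_{\cG}(S,KL)$ (and likewise for $\ell_{\cR}$), and combining with Veech's limit $N_{\cG}(\Sone,L)/(\pi L^2)\to c_{\cG}(S)$ after rescaling the radius yields the stated $\liminf$ and $\limsup$ bounds. The paper leaves this deduction implicit, and your write-up, including the normalization remark, matches it.
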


\paragraph*{\bf Notation} Since we fix our base surface $S$, we will use the notation $$\SC_{*}(S, L) = \{\gamma \in \SC(S): \ell_{*}(\gamma) \le L\}$$ for $* = \cG, \cR, \cC$, and $N_*(S, L) = \#\SC_{*}(L).$ If $A \subset \SC(S)$ is a subset of the set of saddle connections, we will write $$A_*(S, L) =\{\gamma \in \SC(S): \ell_{*}(\gamma) \le L\},$$ and $N_*(A, S, L) = \#A_{*}(S, L).$ 

\paragraph*{\bf Regularized intersection} Given two saddle connections $\gamma, \gamma'$ with holonomy vectors $z_{\gamma} = \ell_{\cG}(\gamma)e^{i\theta(\gamma)}, z_{\gamma'} = \ell_{\cG}(\gamma')e^{i\theta(\gamma')}$ we define their \emph{regularized} intersection $\iota_{\cR}(\gamma,\gamma')$ by \begin{equation}\label{eq:regulardef}
\iota_{\cR}(\gamma,\gamma') =  |z_{\gamma} \wedge z_{\gamma'}| = \ell_{\cG}(\gamma) \ell_{\cG}(\gamma')| \sin(\theta(\gamma)-\theta(\gamma'))|\end{equation} 

\paragraph*{\bf $(\epsilon, \gamma_0)$-regular saddle connections} We now fix a saddle connection $\gamma_0 \in \SC(S)$ and let $\epsilon >0$. We say a saddle connection $\gamma$ is $(\epsilon,\gamma_0)$-\emph{regular} if \begin{equation}\label{eq:gamma0regular}
|\iota_{\cG}(\gamma, \gamma_0) - \iota_{\cR}(\gamma, \gamma_0)| \le \epsilon \ell_{\cG}(\gamma)    
\end{equation}
and define $R^{\epsilon,\gamma_0}$ to be the set of $(\epsilon, \gamma_0)$-regular saddle connections, and let $I^{\epsilon, \gamma_0} = \left( R^{\epsilon,\gamma_0} \right)^c$ be the $(\epsilon,\gamma_0)$-\emph{irregular} saddle connections. The key tool in our proof will be that most saddle connections are $(\epsilon, \gamma_0)$-regular:

\begin{Prop}\label{prop:regular} Fix notation as above. Then there exists $C_0 = C_0(\gamma_0, S)$ (independent of $\epsilon$) such that \begin{equation}\label{eq:regularbound:single}
   \limsup_{L \rightarrow \infty} \frac{N_{\cG}(I^{\epsilon, \gamma_0}, S, L)}{L^2} \le C_0 \epsilon 
\end{equation}
\end{Prop}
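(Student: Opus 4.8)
The plan is to show that a saddle connection $\gamma$ can fail to be $(\epsilon,\gamma_0)$-regular only if $\gamma$, $\gamma_0$, and a third short geodesic arc form a "thin triangle" on the surface, and then invoke the no small (virtual) triangles property together with Dozier's equidistribution result to bound the count of such $\gamma$. Concretely, the geometric intersection number $\iota_{\cG}(\gamma,\gamma_0)$ differs from the regularized intersection $\iota_{\cR}(\gamma,\gamma_0) = |z^{(1)}_\gamma \wedge z^{(1)}_{\gamma_0}|$ essentially by a boundary term: $|z_\gamma \wedge z_{\gamma_0}|$ counts the area swept, hence is (up to $O(1)$ correction per endpoint) the number of times the straight-line representative of $\gamma$ crosses the $G\cdot\gamma_0$-direction foliation inside the cylinder-like neighborhood of $\gamma_0$; the discrepancy in \eqref{eq:gamma0regular} is large, i.e.\ $\gtrsim \epsilon\,\ell_{\cG}(\gamma)$, only when $\gamma$ runs nearly parallel to $\gamma_0$ for a long stretch without the expected transverse intersections — which forces the existence of an embedded triangle (or a "virtual" triangle in the sense of Smillie–Weiss) with one side along $\gamma$, one along $\gamma_0$, and one short side, of area $\lesssim \epsilon$ relative to the scale of $\gamma$. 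I would first make this reduction precise: prove a deterministic lemma saying that if $\gamma \in I^{\epsilon,\gamma_0}$ with $\ell_{\cG}(\gamma)$ large, then the affine image $g\gamma$ under the renormalizing element $g = g_t r_\theta$ that makes $\gamma$ roughly unit-horizontal produces, together with $g\gamma_0$, a virtual triangle of area $\le c\epsilon$ for an explicit $c = c(S)$.

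Next I would pass to counting. By Lemma~\ref{lemma:coarse}, $N_{\cG}(I^{\epsilon,\gamma_0}, S, L) \le N_{\cG}(\SC(S), S, L) = O(L^2)$ trivially, so the point is the factor $\epsilon$. Here I use the standard dynamical unfolding of the counting problem: $N_{\cG}(I^{\epsilon,\gamma_0}, S, L)$ is, up to the usual sector/annulus bookkeeping, controlled by the $SL(2,\R)$-average
\[
\int_{SL(2,\R)/SL(S)} \#\{\gamma \in \SC(S) : \ell_{\cG}(g\gamma)\le 1,\ g\gamma \text{ in a fixed small box},\ \gamma \in I^{\epsilon,\gamma_0}\}\, d\mu(g),
\]
and by the previous paragraph the condition $\gamma \in I^{\epsilon,\gamma_0}$ forces $g$ into the set $E_\epsilon = \{g : S$ carries a virtual triangle of area $\le c\epsilon$ with two sides in the bounded pieces of $g(\SC(S))\}$. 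The no small virtual triangles property says $E_0$ has measure zero (indeed is closed with empty interior), and Dozier's equidistribution/continuity results give that $\mu$-measure of the $\epsilon$-thickening $E_\epsilon$ is $O(\epsilon)$ — this is precisely the quantitative input that upgrades "measure zero" to a linear-in-$\epsilon$ bound. Feeding this back through the (Veech) asymptotic for $N_{\cG}(S,L)/(\pi L^2) \to c_{\cG}(S)$ produces $\limsup_L N_{\cG}(I^{\epsilon,\gamma_0},S,L)/L^2 \le C_0\epsilon$ with $C_0$ depending only on $\gamma_0$ and $S$ (and not $\epsilon$, since the thickening rate is linear with a constant depending only on the geometry of $S$ and the length of $\gamma_0$).

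The main obstacle I expect is the quantitative step: turning the qualitative no-small-triangles statement into the bound $\mu(E_\epsilon) = O(\epsilon)$, uniformly as one also varies over the renormalization parameter (the $g_t$-flow direction), and making sure the implied constant genuinely does not depend on $\epsilon$ and depends on $\gamma_0$ only through its length. This requires knowing that the "bad set" of matrices $g$ shrinks at a controlled rate — morally a transversality/Lipschitz statement for the area function $g \mapsto (\text{area of smallest virtual triangle on } gS)$ near its zero set — which is where Dozier's results (on the set of surfaces with a short virtual triangle and the equidistribution of long horocycle/expanding-sphere pushforwards) do the heavy lifting. A secondary technical nuisance is the bookkeeping translating the $O(1)$-per-endpoint discrepancy between $\iota_{\cG}$ and $\iota_{\cR}$ into a clean statement that is vacuous for bounded $\ell_{\cG}(\gamma)$ (which is harmless: finitely many $\gamma$ contribute $0$ to the $L^2$-normalized limsup) and into the thin-triangle geometry for large $\ell_{\cG}(\gamma)$; I would handle this by choosing $L$ large enough that $\epsilon \ell_{\cG}(\gamma)$ dominates the fixed $O(1)$ boundary defect, so only genuinely thin triangles survive.
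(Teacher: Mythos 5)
There is a genuine gap, and it is in the step you treat as the heart of the argument. The deterministic reduction ``$\gamma\in I^{\epsilon,\gamma_0}$ forces a (virtual) triangle of area $\lesssim\epsilon$ after renormalization'' does not work. The virtual triangle spanned by $\gamma$ and $\gamma_0$ has area $\tfrac12|z_\gamma\wedge z_{\gamma_0}|$, which is $SL(2,\R)$-invariant, so applying $g=g_tr_\theta$ cannot shrink it; for $\gamma$ making a definite angle with $\gamma_0$ this area is comparable to $\iota_{\cR}(\gamma,\gamma_0)\asymp\ell_{\cG}(\gamma)$, i.e.\ large, not small. More fundamentally, $|\iota_{\cG}-\iota_{\cR}|\ge\epsilon\,\ell_{\cG}(\gamma)$ is \emph{not} a near-parallelism/thin-triangle phenomenon: $\iota_{\cR}$ is the statistically expected crossing count on the area-one surface, and the discrepancy can occur for a long $\gamma$ crossing $\gamma_0$ transversally at a definite angle simply because $\gamma$ fails to equidistribute (it spends a disproportionate fraction of its length in a region away from $\gamma_0$). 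No configuration of two nearly parallel sides with a short third side is produced in that case, so the no-small-(virtual)-triangles property has nothing to act on. This also sinks the counting step: since virtual-triangle area is invariant, your set $E_\epsilon\subset SL(2,\R)/SL(S)$ is (for small $\epsilon$, on a lattice surface) empty rather than a set of measure $O(\epsilon)$, and the bound $\mu(E_\epsilon)=O(\epsilon)$ that you yourself flag as the main obstacle is not a technical refinement of Smillie--Weiss or Dozier that one can quote --- it is the entire content, and as framed it is not available.

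For comparison, the paper's proof splits $I^{\epsilon,\gamma_0}$ into two pieces treated by different tools. The near-parallel saddle connections $A^{\epsilon,\gamma_0}$ (where $\iota_{\cR}$ is small but $\iota_{\cG}$ need not be) are counted directly by Dozier's sector-counting theorem, giving $N_{\cG}(A^{\epsilon,\gamma_0},S,L)\le c_A\epsilon L^2$. For saddle connections at definite angle, one replaces $\iota_{\cG}(\gamma,\gamma_0)$ by $|\sin\theta(\gamma)|\int_S f_0\,d\sigma_\gamma$ for a bump function $f_0$ supported on a thin rectangle around $\gamma_0$; the error is $\le\tilde C\epsilon\,\ell_{\cG}(\gamma)$, and the lattice property enters only through a cylinder-decomposition spacing estimate (Lemma~\ref{lemma:spacings}) controlling the spurious crossings near the two ends of the rectangle. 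After this replacement, irregularity becomes failure of equidistribution of $\sigma_\gamma/\ell_{\cG}(\gamma)$ against the mean-zero function $f_0-\ell_{\cG}(\gamma_0)$, and the set of such $\gamma$ grows subquadratically by adapting Dozier's equidistribution argument (with Kerckhoff--Masur--Smillie). If you want to salvage your approach, you would need to supply exactly this missing mechanism for the definite-angle, non-equidistributing saddle connections; the thin-triangle picture cannot see them.
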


\paragraph*{\bf $(\epsilon, \cC)$-regular saddle connections} We will apply Proposition~\ref{prop:regular} to show that for most saddle connections, regularized and combinatorial lengths are comparable. 
We say a saddle connection $\gamma$ is $(\epsilon,\cC)$-\emph{regular} if \begin{equation}\label{eq:Cregular} \left| \ell_{\cC}(\gamma) - \ell_{\cR}(\gamma)\right| < \epsilon \ell_{\cG}(\gamma)
\end{equation}
and define $R^{\epsilon,\cC}$ to be the set of $(\epsilon, \cC)$-regular saddle connections, and let $I^{\epsilon, \cC} = \left( R^{\epsilon,\cC} \right)^c$ to be the $(\epsilon,\cC)$-\emph{irregular} saddle connections. Since $\# \cC = k$, $$\cap_{j=1}^k R^{\epsilon/k, \gamma_j} \subseteq R^{\epsilon, \cC},$$ and so $$I^{\epsilon, \cC} \subseteq \cap_{j=1}^k I^{\epsilon/k, \gamma_j}.$$ Thus we have, as a corollary to Proposition~\ref{prop:regular}:
\begin{Cor}\label{cor:cregular} Fix notation as above. There is a $C_{\cC}>0$ such that \begin{equation}\label{eq:regularbound}
   \limsup_{L \rightarrow \infty} \frac{N_{\cG}(I^{\epsilon, \cC}, S, L)}{L^2} \le C_{\cC} \epsilon 
\end{equation} 
\end{Cor}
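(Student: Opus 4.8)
The plan is to deduce Corollary~\ref{cor:cregular} from Proposition~\ref{prop:regular} by a union bound over the $k$ arcs $\gamma_1,\dots,\gamma_k$ of the filling system $\cC$. First I would unwind the two lengths at a saddle connection $\gamma$: combining \eqref{eq:comblength}, \eqref{eq:regularizedlength}, and the definition \eqref{eq:regulardef} of $\iota_{\cR}$ (recalling that on $\Sone$ one has $\iota_{\cR}(\gamma,\gamma_j)=|\zone_\gamma\wedge\zone_j|$), one gets
$$\ell_{\cC}(\gamma)-\ell_{\cR}(\gamma)=\sum_{j=1}^k\bigl(\iota_{\cG}(\gamma,\gamma_j)-\iota_{\cR}(\gamma,\gamma_j)\bigr),$$
so by the triangle inequality $|\ell_{\cC}(\gamma)-\ell_{\cR}(\gamma)|\le\sum_{j=1}^k|\iota_{\cG}(\gamma,\gamma_j)-\iota_{\cR}(\gamma,\gamma_j)|$. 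A pigeonhole step then shows: for any $0<\epsilon'<\epsilon$, if $\gamma$ is $(\epsilon'/k,\gamma_j)$-regular for every $j$ in the sense of \eqref{eq:gamma0regular}, then $|\ell_{\cC}(\gamma)-\ell_{\cR}(\gamma)|\le\epsilon'\ell_{\cG}(\gamma)<\epsilon\ell_{\cG}(\gamma)$, so $\gamma\in R^{\epsilon,\cC}$; contrapositively,
$$I^{\epsilon,\cC}\subseteq\bigcup_{j=1}^k I^{\epsilon'/k,\gamma_j}.$$
The passage through $\epsilon'<\epsilon$ is merely a harmless device to move from the non-strict inequality in the definition of $(\epsilon'/k,\gamma_j)$-regularity to the strict inequality in \eqref{eq:Cregular}.

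I would then pass to counting functions. Since the count of a union of $k$ sets is at most the sum of the counts,
$$N_{\cG}(I^{\epsilon,\cC},S,L)\le\sum_{j=1}^k N_{\cG}(I^{\epsilon'/k,\gamma_j},S,L)$$
for every $L$, and since $\limsup$ is subadditive, Proposition~\ref{prop:regular} applied to each $\gamma_j$ with $\epsilon'/k$ in place of $\epsilon$ (here it is essential that the constants $C_0(\gamma_j,S)$ do not depend on $\epsilon$) gives
$$\limsup_{L\to\infty}\frac{N_{\cG}(I^{\epsilon,\cC},S,L)}{L^2}\le\sum_{j=1}^k C_0(\gamma_j,S)\,\frac{\epsilon'}{k}.$$
Letting $\epsilon'\to\epsilon$ and setting $C_{\cC}:=\tfrac{1}{k}\sum_{j=1}^k C_0(\gamma_j,S)$ (equivalently $C_{\cC}:=\max_j C_0(\gamma_j,S)$) yields \eqref{eq:regularbound}.

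There is essentially no obstacle in this step: all of the analytic content — the no-small-virtual-triangles property of lattice surfaces and Dozier's equidistribution — is already packaged into Proposition~\ref{prop:regular}, and what remains is the elementary observation that both $\ell_{\cC}$ and $\ell_{\cR}$ are additive over $\cC$, so that controlling each of the $k$ coordinate errors $\iota_{\cG}(\cdot,\gamma_j)-\iota_{\cR}(\cdot,\gamma_j)$ to within $\epsilon/k$ controls the aggregate error to within $\epsilon$. The only minor care needed is the bookkeeping of constants and the strict/non-strict inequality issue noted above, both dispatched by a negligible adjustment of $\epsilon$ and $C_{\cC}$.
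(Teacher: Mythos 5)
Your proof is correct and takes essentially the same route as the paper: the containment $I^{\epsilon,\cC}\subseteq\bigcup_{j=1}^{k}I^{\epsilon/k,\gamma_j}$ (the paper in fact misprints this with an intersection in place of the union), followed by a union bound, subadditivity of $\limsup$, and Proposition~\ref{prop:regular} applied to each $\gamma_j$ with $\epsilon/k$, using that $C_0(\gamma_j,S)$ is independent of $\epsilon$. Your $\epsilon'<\epsilon$ device to reconcile the non-strict inequality in \eqref{eq:gamma0regular} with the strict one in \eqref{eq:Cregular} is a harmless refinement the paper glosses over; no further changes are needed.
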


\paragraph*{\bf Regularized counting} Our final tool in the proof of Theorem~\ref{theorem:comblengthcounting} is a consequence of a counting theorem of Veech~\cite{Veech:siegel}*{Theorem 0.18}. 

\begin{Theorem}\label{theorem:regularcounting} Fix notation as above. Then \begin{equation}\label{eq:regularcounting} \lim_{L \rightarrow \infty} \frac{N_{\cR}(S, L)}{L^2} = c_{\cC}(S).\end{equation}
\end{Theorem}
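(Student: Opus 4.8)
The plan is to recognize $N_{\cR}(S,L)$ as the count of saddle connections whose (area-one) holonomy vectors fall in a dilate of the \emph{fixed} region $\Omegaone_{S,\cC}$, and then to invoke Veech's equidistribution/counting theorem for lattice surfaces applied to that region.

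First I would record the elementary identity underlying the whole statement. By the definition \eqref{eq:regularizedlength}, $\ell_{\cR}(\gamma) = \sum_j |\zone_\gamma \wedge \zone_j| \le L$ if and only if $\sum_j |(\zone_\gamma/L) \wedge \zone_j| \le 1$, i.e. if and only if $\zone_\gamma \in L \cdot \Omegaone_{S,\cC}$; hence
\begin{equation*}
N_{\cR}(S,L) = \#\{\gamma \in \SC(S): \zone_\gamma \in L \cdot \Omegaone_{S,\cC}\}.
\end{equation*}
Next I would note that since $\cC$ is filling, the vectors $\{\zone_j\}$ are not all collinear, so $z \mapsto \sum_j |z \wedge \zone_j|$ is a genuine norm on $\C \cong \R^2$; consequently $\Omegaone_{S,\cC}$ is its closed unit ball, a compact, convex, centrally symmetric body with nonempty interior (in fact a polygon, by \S\ref{subsec:omega}), and in particular $\partial \Omegaone_{S,\cC}$ has Lebesgue measure zero, so $\Omegaone_{S,\cC}$ is a bounded Jordan-measurable continuity set.

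Then I would invoke Veech's counting theorem in the following form: for a lattice surface $S$ and any bounded continuity set $A \subset \C$,
\begin{equation*}
\lim_{L \to \infty} \frac{\#\{\gamma \in \SC(S): \zone_\gamma \in LA\}}{L^2} = \cone_{\cG}(S)\,\vol(A)
\end{equation*}
(taking $A$ the unit disk recovers $N_{\cG}(\Sone, L)/L^2 \to \pi \cone_{\cG}(S)$, which fixes the normalization of the constant). Concretely this comes from \eqref{eq:SCorbit}: writing $\SC(S) = \bigcup_{i=1}^j \Aff^+(S) \cdot \gamma_i$, the holonomy vectors of the $i$-th orbit form a single orbit $\Gamma \zone_{\gamma_i}$ of the lattice $\Gamma = SL(S)$ acting linearly on $\R^2$, and Veech~\cite{Veech:siegel}*{Theorem 0.18} gives that each such lattice orbit equidistributes, so $\#\{v \in \Gamma \zone_{\gamma_i}: v \in LA\}/L^2 \to c_i \vol(A)$ for continuity sets $A$; summing over $i$ yields the displayed identity with $\cone_{\cG}(S) = \sum_i c_i$. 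Applying this with $A = \Omegaone_{S,\cC}$ gives $\lim_{L\to\infty} N_{\cR}(S,L)/L^2 = \cone_{\cG}(S)\,\vol(\Omegaone_{S,\cC}) = c_{\cC}(S)$, which is \eqref{eq:regularcounting}.

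The only real obstacle is ensuring Veech's theorem is available for the region $\Omegaone_{S,\cC}$ rather than merely for Euclidean disks or circular sectors. If only the disk/sector version is on hand, I would insert a standard sandwiching step: since $\Omegaone_{S,\cC}$ is compact convex with measure-zero boundary, for each $\delta > 0$ there are finite unions of circular sectors (polar rectangles) $A^-_\delta \subseteq \Omegaone_{S,\cC} \subseteq A^+_\delta$ with $\vol(A^+_\delta) - \vol(A^-_\delta) < \delta$; applying Veech's result to each sector gives $\limsup_L N_{\cR}(S,L)/L^2 \le \cone_{\cG}(S)\vol(A^+_\delta)$ and $\liminf_L N_{\cR}(S,L)/L^2 \ge \cone_{\cG}(S)\vol(A^-_\delta)$, and $\delta \to 0$ pins the limit. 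The coarse bounds of Corollary~\ref{cor:coarsecounting} guarantee all quantities in sight are finite, so there is no integrability concern; everything else is bookkeeping.
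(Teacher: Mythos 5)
Your proposal is correct and follows essentially the same route as the paper: rewrite $N_{\cR}(S,L)$ as the number of holonomy vectors $\zone_\gamma$ lying in the dilate $L\cdot\Omegaone_{S,\cC}$ and apply Veech's counting theorem (\cite{Veech:siegel}*{Theorem 0.18}) to the indicator of this bounded convex region via a sandwiching approximation. The only cosmetic difference is that the paper approximates the indicator by continuous compactly supported functions $\Psi_{\pm\epsilon}$ supported near $\Omegaone_{S,\cC}$, whereas you propose sectors or a continuity-set formulation; this does not change the substance of the argument.
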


\begin{proof} Let $F = \chi_{\Omega_{\cC}^{(1)}}$ be the indicator function of the region $\Omega_{\cC}^{(1)}.$ Fix $\epsilon >0$, and let $\Psi_{\pm\epsilon}: \C \rightarrow [0, 1]$ be non-negative, continuous compactly supported functions on $\C$ such that $$\Psi_{-\epsilon} < F < \Psi_{\epsilon},$$ and $$\||F-\Psi_{\pm \epsilon}\|_{L^1} < \epsilon,$$ such that the support of $\Psi_{\epsilon}$ is contained in $(1+\epsilon) \Omegaone_{\cC}$, $\Psi_{\epsilon} \equiv 1$ on $\Omegaone_{\cC}$, and the support of $\Psi_{-\epsilon}$ is contained in $\Omegaone_{\cC}$, and  $\Psi_{-\epsilon} \equiv 1$ on $(1-\epsilon)\Omegaone_{\cC}$. By definition, $$N_{\cR}(S, L) = \sum_{\gamma \in SC(S)} F\left(\frac{z_{\gamma}}{L}\right).$$ Applying Veech~\cite{Veech:siegel}*{Theorem 0.18} to $\Psi_{\pm \epsilon}$ yields $$\lim_{L \rightarrow \infty} \frac{1}{L^2}\sum_{\gamma \in SC(S)} \Psi_{\pm \epsilon}\left(\frac{z_{\gamma}}{L}\right) = \cone_{\cG} \int_{\C} \Psi_{\pm}(z) dm(z),$$ where $dm$ is Lebesgue measure on $\C$. Thus, we have $$(1-\epsilon) c_{\cC}(S) \le \lim_{L \rightarrow \infty} \frac{N_{\cR}(S, L)}{L^2} \le (1+\epsilon) c_{\cC}(S),$$ and since $\epsilon$ was arbitrary, we have our result. 
\end{proof}

\subsection{Proof of Theorem~\ref{theorem:comblengthcounting}}\label{sec:comblengthcountingproof} We will show how to prove Theorem~\ref{theorem:comblengthcounting} using Corollary~\ref{cor:cregular} and Theorem~\ref{theorem:regularcounting}. By definition, $$\SC_{\cC}(S, L) = R^{\epsilon, \cC}_{\cC}(S, L) \bigsqcup I^{\epsilon, \cC}_{\cC}(S, L).$$ For $\gamma \in R^{\epsilon, \cC}_{\cC}(S, L)$, 
\begin{align*} \ell_{\cR}(\gamma) &\le \ell_{\cC}(\gamma) + \epsilon \ell_{\cG}(\gamma) \\
    & \le \ell_{\cC}(\gamma) + \epsilon K \ell_{\cC}(\gamma) \\ &= (1+\epsilon K)L,
\end{align*}
where the last inequality follows from Lemma~\ref{lemma:coarse}. So $$N_{\cC}(R^{\epsilon, \cC}, S, L) \le N_{\cR}(S, (1+\epsilon K)L),$$ and thus \begin{equation}\label{eq:combupperbound}
\limsup_{L \rightarrow \infty} \frac{N_{\cC}(S, L)}{L^2} \le \limsup_{L \rightarrow \infty} \frac{N_{\cR}(S, (1+\epsilon K)L)}{L^2} + \limsup_{L \rightarrow \infty} \frac{N_{\cC}(I^{\epsilon, \cC}, S, L)}{L^2}.
\end{equation}
Using Theorem~\ref{theorem:regularcounting}, the right-hand side of \eqref{eq:combupperbound} is bounded above by $$c_{\cC}(S)(1+\epsilon K)^2 +  C_{\cC} \epsilon,$$ and since $\epsilon >0$ was arbitrary, we have $$\limsup_{L \rightarrow \infty} \frac{N_{\cC}(S, L)}{L^2} \le c_{\cC}(S).$$ For the other direction note that for $\gamma \in R^{\epsilon, \cC}$ \begin{align*}
\ell_{\cR}(\gamma) &\geq \ell_{\cC}(\gamma) - \epsilon \ell_{\cG}(\gamma) \\ &\geq \left(1-\frac{\epsilon}{k}\right) \ell_{\cC}(\gamma).  
\end{align*} Thus $$R_{\cR}^{\epsilon, \cC}\left(S, L\left(1-\epsilon/K\right)\right) \subset \SC_{\cC}(S, L).$$ By definition $$\left| N_{\cR}\left(S,  L\left(1-\epsilon/K\right)\right) - N_{\cR}\left(R^{\epsilon, \cC}, S,  L\left(1-\epsilon/K\right)\right)  \right| = N_{\cR}\left(I^{\epsilon, \cC}, S,  L\left(1-\epsilon/K\right)\right),$$ and by Lemma~\ref{lemma:coarse}, we have $$N_{\cR}\left(I^{\epsilon, \cC}, S,  L\left(1-\epsilon/K\right)\right) \le N_{\cG}\left(I^{\epsilon, \cC}, S,  LK\left(1-\epsilon/K\right)\right).$$ Therefore, \begin{align*}
\liminf_{L \rightarrow \infty} \frac{N_{\cC}(S, L)}{L^2} &\geq \liminf_{L \rightarrow \infty} \frac{N_{\cR}\left(S, L\left(1-\epsilon/K \right)\right)}{L^2} - \liminf_{L \rightarrow \infty} \frac{N_{\cG}\left(I^{\epsilon, \cC}, S, LK\left(1-\epsilon/K\right)\right)}{L^2} \\ &\geq c_{\cC}(S)\left(1-\epsilon/K\right)^2 - C_{\cC} K^2 \left(1-\epsilon/K\right)^2\epsilon,
\end{align*}
where the last inequality follows from \eqref{eq:regularbound}. Again, since $\epsilon$ was arbitrary, we have $$\liminf_{L \rightarrow \infty} \frac{N_{\cC}(S, L)}{L^2} \geq c_{\cC}(S),$$ which concludes the proof. \qed\medskip

\subsection{Angles and intersections}\label{sec:angles} We now turn to proving Proposition~\ref{prop:regular}, using a series of lemmas. We only use the lattice property in the last lemma, whereas the first two rely an counting and equidistribution results of Dozier~\cite{Dozier}. We recall that we fix a saddle connection $\gamma_0 \in \SC(S)$, and $\epsilon>0$. Let $$A^{\epsilon, \gamma_0} = \{\gamma \in \SC(S): |\sin(\theta(\gamma) - \theta(\gamma_0))| < \epsilon\}$$ be the set of saddle connections whose angle is within $\epsilon$ of $\gamma_0$. Our next lemma says that this set is not large.
\begin{lemma}\label{lemma:sectors} There is a $c_A = c_A(S)$ (independent of $\gamma_0$) and such that for all $L >0$, \begin{equation}\label{eq:sectors} N_{\cG}(A^{\epsilon, \gamma_0}, S, L) \le c_A \epsilon L^2.
\end{equation}   
\end{lemma}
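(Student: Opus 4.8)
The plan is to deduce this from Veech's quadratic counting result for lattice surfaces, applied to the counting of saddle connections whose holonomy vectors land in a thin sector. Recall that for a lattice surface $S$, Veech~\cite{Veech:eisenstein} (see also Vorobets~\cite{Vorobets}) shows that $N_{\cG}(S,L)/(\pi L^2) \to \cone_{\cG}(S)$; more is true, namely the holonomy vectors of saddle connections equidistribute (after the usual quadratic rescaling) with respect to Lebesgue measure on $\C$. Concretely, for any nice region $\Omega \subset \C$, $\#\{\gamma : \zone_\gamma \in L\Omega\} \sim \cone_{\cG}(S) \vol(\Omega) L^2$ as $L \to \infty$. (This is exactly the input used in the proof of Theorem~\ref{theorem:regularcounting} above, via Veech~\cite{Veech:siegel}*{Theorem 0.18}.) The set $A^{\epsilon,\gamma_0} \cap \SC_{\cG}(S,L)$ consists of saddle connections whose holonomy vector lies in the ``bowtie'' region $$W_\epsilon = \{ z = re^{i\theta} : r \le L,\ |\sin(\theta - \theta(\gamma_0))| < \epsilon \},$$ whose area is $O(\epsilon L^2)$ with an implied constant that is absolute (independent of $\theta(\gamma_0)$, by rotational symmetry of the region). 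Applying the counting/equidistribution statement gives $N_{\cG}(A^{\epsilon,\gamma_0},S,L) \le c_A \epsilon L^2$ for $L$ large, with $c_A$ depending only on $\cone_{\cG}(S)$ and the absolute area constant.

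There is one wrinkle: the lemma asserts the bound for \emph{all} $L>0$, not just asymptotically. To get this uniform-in-$L$ version, I would instead invoke the appropriate uniform counting statement. One route: Veech's~\cite{Veech:siegel} argument (or Eskin--Masur style thickening) actually produces, for a fixed lattice surface, a bound of the form $\#\{\gamma : \zone_\gamma \in L\Omega\} \le C(S)\big(\vol(\Omega) + 1\big)L^2$ valid for all $L$; applied to $\Omega = W_\epsilon/L$ with $\vol(W_\epsilon/L) = O(\epsilon)$ this is not quite enough because of the $+1$. A cleaner route is to note that when $\epsilon \ge 1$ the claimed inequality is weaker than Masur's~\cite{Masur2} crude upper bound $N_{\cG}(S,L) \le c_2(S)L^2$, so we may assume $\epsilon < 1$; and for the range of $L$ where the asymptotic has not yet kicked in, $N_{\cG}(S,L)$ is bounded by a constant times $L^2$ anyway, so enlarging $c_A$ handles all small $L$ simultaneously with the thin-sector asymptotic handling large $L$. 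I would phrase it: pick $L_0$ past which the sector asymptotic is within a factor $2$; for $L \le L_0$ use $N_{\cG}(A^{\epsilon,\gamma_0},S,L) \le N_{\cG}(S,L_0) =: M$, and absorb $M$ into $c_A$ by noting we only care about $\epsilon L^2 \ge$ some fixed threshold (otherwise the count is forced to be $0$ or handled trivially). One must check the threshold argument is uniform in $\gamma_0$, which it is since $M = M(S,L_0)$ does not see $\gamma_0$.

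The main obstacle is precisely the uniformity in $L$ combined with uniformity in $\gamma_0$: the bare asymptotic $N_{\cG}(A^{\epsilon,\gamma_0},S,L) \sim \cone_{\cG}(S)\vol(W_\epsilon) L^2$ holds for each fixed $\epsilon, \gamma_0$ but the rate of convergence could a priori depend on $\theta(\gamma_0)$ and on $\epsilon$. The right tool to make this robust is the \emph{uniform} version of Veech's sector-counting (essentially: the holonomy vectors of saddle connections form a set of ``bounded density'' in $\C$ with a density constant depending only on $S$), which follows from the homogeneity of the $SL(2,\R)$ action combined with Masur's non-divergence / Eskin--Masur thickening. Since this is standard for lattice surfaces I would cite it rather than reprove it, but flagging the uniformity explicitly is the one place care is needed. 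With that in hand, the lemma is immediate by computing $\vol(W_\epsilon) = O(\epsilon L^2)$ with absolute constant.
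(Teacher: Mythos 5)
Your ultimate move coincides with the paper's: the paper's entire proof is a one-line citation of Dozier~\cite{Dozier}*{Theorem 1.8}, which is precisely the uniform sector-counting bound you end up reaching for --- it holds for \emph{every} translation surface (no lattice property is needed for this lemma), is uniform in the direction of the sector (hence the independence of $\gamma_0$) and in the radius, and is linear in the angular width, so after the elementary observation that the bowtie $\{re^{i\theta}: r\le L,\ |\sin(\theta-\theta(\gamma_0))|<\epsilon\}$ has area $O(\epsilon L^2)$ with an absolute constant, the lemma is immediate. Your plan of ``reduce to holonomy vectors in a thin sector, then cite a uniform counting statement'' is therefore the same route; the fixed-sector asymptotic via Veech~\cite{Veech:siegel} is not actually needed.

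Two of your supporting claims, however, are wrong and should be dropped rather than leaned on. First, it is not true that when $\epsilon L^2$ is below a fixed threshold ``the count is forced to be $0$ or handled trivially'': $\gamma_0$ itself belongs to $A^{\epsilon,\gamma_0}$ for every $\epsilon>0$, as do all saddle connections parallel or nearly parallel to it, so $N_{\cG}(A^{\epsilon,\gamma_0},S,L)\ge 1$ as soon as $L\ge \ell_{\cG}(\gamma_0)$, no matter how small $\epsilon$ is. Consequently your ``absorb $M$ into $c_A$'' patch does not close; any uniform bound of the shape $c_A\epsilon L^2$ with $c_A=c_A(S)$ must either restrict to $\epsilon L^2\gtrsim 1$ or carry an additive constant (a caveat inherited by the lemma's literal ``for all $L>0$'' phrasing, and harmless for its use, which is only through the $\limsup$ in Proposition~\ref{prop:regular}). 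Second, ``bounded density'' of the set of holonomy vectors is too weak a statement to substitute for the sector theorem: covering a sector of angular width $\epsilon<1$ and radius $L$ by unit balls requires on the order of $\epsilon L^2+L$ balls, so bounded density only yields $O(\epsilon L^2+L)$, not $O(\epsilon L^2)$; the linearity in $\epsilon$, uniformly over sectors and radii, is genuinely the content of Dozier's theorem (whose proof is indeed in the Eskin--Masur style, so your attribution of the mechanism is reasonable, but the statement you formulate is not sufficient). The clean fix is simply to quote Dozier's Theorem 1.8, as the paper does.
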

\begin{proof} This is an immediate corollary of Dozier~\cite{Dozier}*{Theorem 1.8}.
\end{proof}

\paragraph*{\bf (Non)-equidistribution} Our next lemma concerns the size of the set of saddle connections which have non-equidistributing behavior. Let $\lambda$ denote the Lebesgue probability measure on $S$, and let $g: S \rightarrow \R$ be a continuous function with mean $0$ with respect to $\lambda$. Given a saddle connection $\gamma$, let $\sigma_{\gamma}$ denote the measure on $S$ given by length Lebesgue measure supported on $\gamma$, that is, $\sigma_{\gamma}(S) = \sigma_{\gamma}(\gamma) = \ell_{\cG}(\gamma)$. For $\epsilon >0$, we say a saddle connection $\gamma$ is $(\epsilon, g)$-\emph{regular} if $$\left|\int_S g d\sigma_{\gamma}\right| < \epsilon \ell_{\cG}(\gamma),$$ and denote the set of $(\epsilon, g)$-regular saddle connections by $R^{\epsilon, g}$, and its complement by $I^{\epsilon, g}$. 
\begin{lemma}\label{lemma:nonequidist} Fix notation as above. Then \begin{equation}\label{eq:nonequidist} \limsup_{L \rightarrow \infty} \frac{N_{\cG}(I^{\epsilon, g}, S, L)}{L^2} = 0.
\end{equation}
\end{lemma}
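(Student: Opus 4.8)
The plan is to deduce Lemma~\ref{lemma:nonequidist} from an equidistribution result of Dozier~\cite{Dozier} together with a dyadic-decomposition argument. The statement says that the set of saddle connections which fail, at a fixed scale $\epsilon$, to equidistribute against a fixed mean-zero continuous test function $g$ has \emph{zero} quadratic growth rate --- strictly smaller order than the full set $\SC(S)$, which by Lemma~\ref{lemma:coarse} and Corollary~\ref{cor:coarsecounting} grows like $L^2$. So the content is not merely that irregular saddle connections are rare, but that they are asymptotically negligible compared to $L^2$.

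First I would recall the precise form of Dozier's equidistribution theorem: for a lattice surface (or more generally for the relevant setting in \cite{Dozier}), as $L \to \infty$ the uniform measure on the set of holonomy vectors $z_\gamma$ with $\ell_{\cG}(\gamma) \le L$, pushed forward appropriately, together with the fact that long saddle connections equidistribute on $S$ in the sense that $\frac{1}{\ell_{\cG}(\gamma)}\sigma_\gamma \to \lambda$ weakly along \emph{almost all} sequences of saddle connections of growing length. Concretely, Dozier's counting/equidistribution statement gives that for every $\delta > 0$,
\begin{equation*}
\lim_{L \to \infty} \frac{N_{\cG}\!\left(\left\{\gamma : \left|\tfrac{1}{\ell_{\cG}(\gamma)}\int_S g\, d\sigma_\gamma\right| \ge \epsilon,\ \ell_{\cG}(\gamma) \le L\right\}, S, L\right)}{L^2} = 0
\end{equation*}
is \emph{not} immediate at a single scale; rather, what is clean is a weighted/averaged version. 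So the key step is to bridge from the averaged statement to the sharp $L^2$ statement. I would do this by a standard two-step device: (i) use Dozier's result to control the \emph{second moment} (or an $L^1$-average over $L$) of the discrepancy $\gamma \mapsto \frac{1}{\ell_{\cG}(\gamma)}\int_S g\, d\sigma_\gamma$ over saddle connections up to length $L$, showing it is $o(L^2)$; (ii) run a dyadic decomposition: write the irregular set up to length $L$ as a union over annuli $\{2^{k} < \ell_{\cG}(\gamma) \le 2^{k+1}\}$ and observe that on each annulus the count of irregular saddle connections is bounded by Markov's inequality applied to the average discrepancy over that annulus, which Dozier's theorem forces to be $o(4^k)$; summing a geometric-type series in $k$ recovers $N_{\cG}(I^{\epsilon,g}, S, L) = o(L^2)$. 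Approximating $g$ by smooth functions (or directly quoting Dozier at the required regularity) handles the continuity-versus-smoothness gap, since $\|g\|_\infty$ bounds the discrepancy crudely and absorbs the approximation error.

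The main obstacle I anticipate is exactly this passage from an averaged equidistribution statement (which is what ergodic-theoretic / Siegel-transform methods like Dozier's typically produce directly) to a genuine scale-by-scale $o(L^2)$ bound. One has to be careful that the $o(\cdot)$ is uniform enough across dyadic annuli that the summation does not reintroduce a full-order $L^2$ term; the cleanest route is to phrase everything in terms of the single quantity $\frac{1}{L^2}\sum_{\ell_{\cG}(\gamma) \le L}\big(\int_S g\, d\sigma_\gamma\big)$ or its absolute-value analogue and show directly it tends to $0$, then deduce the counting statement by a final Markov/Chebyshev step with $\epsilon$ fixed. An alternative, possibly shorter, route --- if Dozier's paper already states equidistribution of saddle connections in the strong form ``all but a density-zero (in the $L^2$-counting sense) subset equidistribute'' --- is to simply quote that and note the irregular set is contained in the exceptional set, giving \eqref{eq:nonequidist} with essentially no extra work; I would check \cite{Dozier} for such a statement first, since it would make the proof a one-line citation parallel to that of Lemma~\ref{lemma:sectors}.
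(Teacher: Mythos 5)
There is a genuine gap here. Your bridge from Dozier's theorem to the lemma rests on step (i): extracting from \cite{Dozier} a bound on the second moment, or on the $L^1$-average of the \emph{absolute} discrepancy $\gamma \mapsto \left|\frac{1}{\ell_{\cG}(\gamma)}\int_S g\, d\sigma_{\gamma}\right|$, over all saddle connections of length at most $L$. Dozier's Theorem 1.3 gives no such thing: it says that the \emph{signed} average of the normalized measures $\frac{1}{\ell_{\cG}(\gamma)}\sigma_{\gamma}$ over \emph{all} saddle connections of length at most $L$ converges to Lebesgue measure. That statement cannot rule out a positive-quadratic-density family of saddle connections with $\int_S g\, d\sigma_{\gamma} > \epsilon\, \ell_{\cG}(\gamma)$ exactly compensated by another family with strongly negative values of the integral; cancellation makes the full-average statement blind to the irregular set, and no Markov/Chebyshev or dyadic-annulus manipulation of the signed first moment can recover control of the absolute one. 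A second-moment bound would require equidistribution-type input on pairs of saddle connections, which is not what you are citing. Your fallback — hoping \cite{Dozier} already states a ``density-zero exceptional set'' version — is not available either; indeed the paper explicitly says it must \emph{adapt Dozier's proof}, not quote his statement.

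The idea you are missing is to run the argument on the irregular subset itself, by contradiction. Split $I^{\epsilon,g}$ into positively and negatively irregular saddle connections (this is how the paper handles the sign issue you gloss over). Supposing, say, the positively irregular set satisfies $N_{\cG}(I^{\epsilon,g,+},S,L_k)\ge cL_k^2$ along some $L_k\to\infty$, form the averaged probability measures $\beta_k$ built only from those saddle connections. The point of the contradiction hypothesis (positive quadratic density) is that Dozier's \emph{proof} of his Theorem 1.3 still applies to this restricted average: any weak-$*$ limit, viewed on $T^1S = S\times S^1$, has angular marginal absolutely continuous with respect to Lebesgue on $S^1$, hence its projection to $S$ is invariant under the straight-line flow in a positive-measure set of directions, and Kerckhoff--Masur--Smillie then forces that projection to be Lebesgue measure $\lambda$. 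But every $\beta_k$ integrates $g$ to at least $\epsilon$, so any limit does too, contradicting $\int_S g\, d\lambda = 0$. This compactness/contradiction structure, with the subset-averaged measures and the KMS input, is what replaces your moment-plus-dyadic scheme, and without it the proposal does not close.
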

\begin{proof} We define the set $I^{\epsilon ,g, +}$ of \emph{positively irregular} saddle connections $\gamma$, that is, those with $$\int_S g d\sigma_{\gamma} > \epsilon \ell_{\cG}(\gamma),$$ and show $$\limsup_{L \rightarrow \infty} \frac{N_{\cG}(I^{\epsilon, g. +}, S, L)}{L^2} = 0.$$ The same argument will also apply to the similarly defined set of \emph{negatively irregular} saddle connections. We will use Dozier's proof of \cite{Dozier}*{Theorem 1.3} and adapt it for subsequences. Suppose by contradiction that there is a sequence $L_k \rightarrow \infty$, and a $c>0$ so that $$N_{\cG}(I^{\epsilon, g. +}, S, L_k) \geq cL_k^2.$$ Define the averaged measure $$\beta_k = \frac{1}{N_{\cG}(I^{\epsilon, g. +}, S, L_k)} \sum_{\gamma \in I^{\epsilon, g. +}(S, L_k)} \frac{\sigma_{\gamma}}{\ell_{\cG}(\gamma)}.$$  Note that Dozier uses the notation $\mu_{R, S}$ for the measure obtained by averaging the probability measures on saddle connections of length at most $R$ in a subset S. Since we use $S$ to denote the surface, and $\mu$ in other ways, we have changed the notation. Following Dozier~\cite{Dozier}*{page 106, proof sketch of Theorem 1.3}, there is a limit measure on $T^1 S = S \times S^1$ whose projection to the $S^1$-factor is absolutely continuous with respect to Lebesgue measure on $S^1$, which implies that the projection of the limit measure to $S$ must be invariant under the directional flow in a positive Lebesgue measure set of angles, which, by Kerckhoff-Masur-Smillie~\cite{KMS} implies that any limit measure on $S$ must be Lebesgue, but this is contradicted by the definition of irregularity.
\end{proof}

\begin{figure}[ht!]
    \centering
    \begin{tikzpicture}[scale=2.0]
        \draw(0,0)--(4.1,0)node[right, below]{\tiny $R$}--(4.1,0.3)--(0, 0.3)--cycle;
        \draw[<-](4.2, 0)--(4.2, 0.07)node[above]{\tiny $b_1$};
        \draw[->](4.2, 0.23)--(4.2, 0.3);
        
  \draw[<-](0, 0.34)--(.07, 0.34)node[right]{\tiny $b_2$};
    \draw[->](.23, 0.34)--(.3, 0.34);
        \draw(0.3,0.15)--(3.8,0.15)node[right]{\tiny $\gamma_0$};
        \filldraw [black] (0.3,0.15) circle (.5pt);
        \filldraw [black] (3.8,0.15) circle (.5pt);
        \draw[red](1,0)--(1.25, 0.3);
        \draw[red](0.5,0)--(.75, 0.3);
        \draw[red](1.1,0)--(1.35, 0.3);
        \draw[red](1.6,0)--(1.85, 0.3);
        \draw[red](3,0)--(3.25, 0.3);
        \draw[red](3.65,0)--(3.9, 0.3);
        \draw[dashed](0.3,0)--(0.3, 0.3);
           \draw[dashed](3.8,0)--(3.8, 0.3);
             \draw[<-](3.8, 0.34)--(3.87, 0.34)node[right]{\tiny $b_2$};
    \draw[->](4.03, 0.34)--(4.1, 0.34);

    \end{tikzpicture}
    \caption{The intersections of the \textcolor{red}{saddle connection $\gamma$} with our fixed saddle connection $\gamma_0$, compared to the crossings across a rectangle $R =R(\gamma_0, b_1, b_2)$, as measured by integrating a bump function $f_0$ whose support is contained in $R$. If the angle $\theta(\gamma) = \theta$, the length of each red crossing segment is $\frac{b_1}{|\sin \theta|}$.}
    \label{fig:spacings}
\end{figure}

\paragraph*{\bf Spacings and the lattice property} We now turn to our final lemma, Lemma~\ref{lemma:intersections}, which states that for saddle connections which make a definite angle with a fixed saddle connection, we can approximate the geometric intersection number using a continuous version of the indicator function of a saddle connection. We use the lattice property crucially in the proof of the intermediate result Lemma~\ref{lemma:spacings} which we need for the proof of Lemma~\ref{lemma:intersections}. We fix a saddle connection $\gamma_0$ on $S$, and fix $\epsilon>0$. We will assume (without loss of generality) that $\gamma_0$ is a horizontal saddle connection, that is $\theta(\gamma_0) =0$. Let $f_0: S \rightarrow \R$ be a continuous function supported in a very small neighborhood of $\gamma_0$: given parameters $b_1, b_2$ (which we will select so $\max(b_1/b_2, 2b_2) < \epsilon$), we assume the support of $f_0$ is a slightly expanded smoothed version of a rectangle $R = R(\gamma, b_1, b_2)$ containing $\gamma_0$, where $R(\gamma, b_1, b_2)$ is a rectangle of vertical size $b_1$, and horizontally, it overflows from $\gamma_0$ by length $b_2$ on both sides, see Figure~\ref{fig:spacings}, and so $R$ contains no zeros of $S$ other than the endpoints of $\gamma_0$. We fix the value of the function $f_0$ on the rectangle $R$ to be constant and equal to $1/b_1$, so that $$\int_{S} f_0 d\lambda \cong \frac{1}{b_1}\left(\ell_{\cG}(\gamma_0) + 2b_2\right)b_1 = \left(\ell_{\cG}(\gamma_0) + 2b_2\right) \cong \ell_{\cG}(\gamma_0).$$  We think of $f_0$ as a continuous approximation to the measure $\sigma_{\gamma_0}$. 

\begin{lemma}\label{lemma:intersections} There is a $\tilde{C} = \tilde{C}(S)$ so that for any $\gamma \notin A^{\epsilon, \gamma_0}$ (that is, $|\sin(\theta(\gamma))|= |\sin(\theta(\gamma) - \theta(\gamma_0))|> \epsilon$), we have \begin{equation}\label{eq:intersections} \left| \left|\sin\left(\theta(\gamma)\right)\right|\cdot \int_{S} f_0 d\sigma_{\gamma} - \iota_{\cG}(\gamma, \gamma_0) \right| \le \tilde{C} \epsilon \ell_{\cG}(\gamma).
\end{equation}
\end{lemma}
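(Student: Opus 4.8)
The plan is to read $\int_S f_0\,d\sigma_\gamma$ as being, up to the effect of the smoothing collar, $b_1^{-1}$ times the length of $\gamma$ inside the thin rectangle $R$, and to compute that length by cutting $\gamma\cap R$ into chords. Normalize $\theta(\gamma_0)=0$ and write $\theta=\theta(\gamma)$, $\ell_0=\ell_{\cG}(\gamma_0)$; work in the local coordinate in which $R$ is (smoothing aside) $[-b_2,\ell_0+b_2]\times[-b_1/2,b_1/2]$ with $f_0\equiv b_1^{-1}$ on it. The key elementary fact is that each maximal segment of $\gamma\cap R$ is a chord of a rectangle of height $b_1$ in direction $\theta$: since $|\sin\theta|>\epsilon$ it has length $\le b_1/|\sin\theta|$, and since moreover $b_1/b_2<\epsilon$ its horizontal extent is $b_1|\cot\theta|<b_1/\epsilon<b_2$.

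Now classify the chords, which is plane geometry. No chord joins the two vertical sides of $R$ (the horizontal distance between them is $\ell_0+2b_2>b_2$), so each chord is either \emph{full}, joining the top and bottom sides --- hence of length exactly $b_1/|\sin\theta|$ and meeting the line $\{y=0\}\supseteq\gamma_0$ in exactly one point --- or \emph{partial}, meeting exactly one vertical side. The extent bound shows that every partial chord, and every full chord whose crossing of $\{y=0\}$ falls outside $[0,\ell_0]$, is contained in one of the two ``end caps'' $[\ell_0-b_2,\ell_0+b_2]\times[-b_1/2,b_1/2]$, $[-b_2,b_2]\times[-b_1/2,b_1/2]$ around the (singular) endpoints of $\gamma_0$; it also shows conversely that every intersection point of $\gamma$ with $\gamma_0$ lies on a full chord, so the full chords crossing $\gamma_0$ --- the ``good'' chords --- are in bijection with the $\iota_{\cG}(\gamma,\gamma_0)$ intersection points. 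Writing $B=B(\gamma)$ for the number of the remaining (``bad'') chords, all of which lie in the end caps, and summing chord lengths (a partial chord has length $\le b_1/|\sin\theta|$), one gets
\[ 0\ \le\ \frac{|\sin\theta|}{b_1}\,\ell(\gamma\cap R)\ -\ \iota_{\cG}(\gamma,\gamma_0)\ \le\ B. \]
The smoothing collar contributes only a lower-order correction: along the top and bottom of $R$ each chord meets it in pieces of length $O(\eta/\epsilon)$ ($\eta$ the collar width) and there are $O(\ell_{\cG}(\gamma)/b_1)+B$ chords, while the part of the collar near the vertical sides lies in the end caps and is absorbed into $B$; so, choosing $\eta$ small enough in terms of $S,\gamma_0,\epsilon$, we get $\bigl|\,|\sin\theta|\int_S f_0\,d\sigma_\gamma-\tfrac{|\sin\theta|}{b_1}\ell(\gamma\cap R)\bigr|\le\tfrac{\epsilon}{2}\ell_{\cG}(\gamma)+O(B)$.

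Everything therefore reduces to bounding $B$, and this is the one place the lattice hypothesis is used and where I expect the real work to be. Each end cap lies in the ball of radius $b_2+b_1/2<\epsilon$ about a zero of $\omega$, so $B$ is at most the number of connected components of $\gamma$ inside the two $\epsilon$-balls at the endpoints of $\gamma_0$ --- a quantity that is \emph{not} a priori controlled by $\ell_{\cG}(\gamma)$, since a saddle connection can in principle wind arbitrarily often near a singular point. I would package the needed bound as a separate ``spacings'' lemma asserting that on a lattice surface this component count is $O_S(\epsilon\,\ell_{\cG}(\gamma))$, with the linear dependence on $\epsilon$ coming from $2b_2<\epsilon$, and prove it from the \emph{no small (virtual) triangles} property of Smillie--Weiss: joining consecutive components of $\gamma$ near a singular point $v$ to $v$ produces triangles, and too many components near $v$ would force these triangles to have area tending to $0$, contradicting the uniform positive lower bound on virtual triangle areas that characterizes lattice surfaces; making this quantitative gives the claimed count. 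Feeding $B\le\tilde C_1(S)\,\epsilon\,\ell_{\cG}(\gamma)$ back into the two estimates above yields \eqref{eq:intersections} with $\tilde C$ depending only on $S$.
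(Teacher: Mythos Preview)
Your chord decomposition and reduction to a ``spacings'' lemma are exactly what the paper does: classify the pieces of $\gamma\cap R$ into full crossings (in bijection with the $\iota_{\cG}(\gamma,\gamma_0)$ intersection points) and leftover pieces confined to the two end caps of width $b_2$, then bound the number $B$ of the latter. You are also right that the spacings bound is the only place the lattice hypothesis enters, and that $2b_2<\epsilon$ is what converts the spacing bound into the linear $\epsilon$ in the final estimate.

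The gap is in your sketch of the spacings lemma. The ``triangles'' you describe, obtained by joining consecutive components of $\gamma$ in $B_\epsilon(v)$ to the cone point $v$, do not have saddle connections as sides: the two segments from $v$ to interior points of $\gamma$ are just flat geodesic segments ending at regular points, so the Smillie--Weiss no-small-virtual-triangles bound does not apply to them. Nor is there an obvious packing argument here, since such triangles can have area $o(1)$ without any contradiction. So as written the bound $B=O_S(\epsilon\,\ell_{\cG}(\gamma))$ is not established.

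The paper's route is more direct and avoids this problem: since $S$ is a lattice surface and $\theta=\theta(\gamma)$ is a saddle-connection direction, the surface decomposes into finitely many cylinders $\cC_1,\dots,\cC_J$ in direction $\theta$, with widths $w_i$ and heights $h_i$. The intersection points of the direction-$\theta$ saddle connections with any horizontal segment are then spaced by exactly the horizontal cylinder crossings $h_i/|\sin\theta|$, so the minimum spacing is $\min_i h_i/|\sin\theta|$. Two uniform constants of the lattice surface finish the job: a lower bound $a=a(S)$ on cylinder areas gives $h_i\ge a/w_i$, and a length-ratio bound $\ell_{\cG}(\gamma)\ge d(S)\max_i w_i$ (all saddle connections in a periodic direction have comparable lengths) gives $\min_i h_i\ge a d/\ell_{\cG}(\gamma)$. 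Hence the minimum spacing is at least $C(S)/(|\sin\theta|\,\ell_{\cG}(\gamma))$, and on the two end intervals of total length $2b_2\le\epsilon$ one gets $B\le \epsilon\,\ell_{\cG}(\gamma)/C(S)$. This is the Lemma~\ref{lemma:spacings} input you were looking for; once you have it, your chord-counting argument goes through unchanged.
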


\paragraph*{\bf Spacings on horizontal segments} To prove Lemma~\ref{lemma:intersections} we first require an lemma on spacings of intersection points of saddle connections and horizontal segments on lattice surfaces. 

\begin{lemma}\label{lemma:spacings} Let $I$ be a horizontal segment of length $L$ on $S$, where we identify $I$ with the interval $[0, L]$. Let $\gamma$ be a saddle connection on $S$, and let $\gamma = \gamma_1, \ldots, \gamma_N$ be the set of saddle connections in direction $\theta = \theta(\gamma)$, and let $$0 \le t_1 < t_2 < \ldots t_k \le L$$ be the points of intersection of the collection $\{\gamma_i\}_{i=1}^N$ with $I$, $k = \sum_{j=1}^N i_{\cG}(\gamma_j, I)$. Then there is a $C = C(S)>0$ such that \begin{equation}\label{eq:spacings} \min_j (t_{j+1}- t_j) \geq \frac{C}{\sin (\theta) \ell_{\cG}(\gamma)}\end{equation}

\end{lemma}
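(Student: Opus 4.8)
The plan is to exploit the fact that all the saddle connections $\gamma_1,\dots,\gamma_N$ lie in a single direction $\theta$, so the intersection points $t_1 < \dots < t_k$ on $I$ are the intersections of $I$ with a *single* leaf-collection of the linear flow $F_\theta$ in direction $\theta$. If two such intersection points $t_j < t_{j+1}$ were very close together, then the two flow leaves through them, followed for a bounded time, would return close to $I$ and bound (together with a sub-arc of $I$) a "thin" region — and on a lattice surface thin configurations of this kind are ruled out by the no small (virtual) triangles property of Smillie--Weiss~\cite{SmillieWeiss}, which is the only place the lattice hypothesis enters. So the first step is to set up the right geometric object: from a pair of consecutive intersection points $t_j, t_{j+1}$ with $t_{j+1}-t_j = \delta$, I would flow the two points in direction $\theta$ and produce a triangle (or virtual triangle, in the sense of~\cite{SmillieWeiss}) with one side along $I$ of length $\delta$ and the other two sides sub-segments of saddle connections in direction $\theta$ of length at most a bounded multiple of $\ell_{\cG}(\gamma)$; its area is then $O(\delta\,\ell_{\cG}(\gamma)\,|\sin\theta|)$.

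The second step is to argue that such a (virtual) triangle really is nondegenerate and "small", and then invoke the uniform lower bound on areas of virtual triangles on a lattice surface: there is an $\epsilon_0 = \epsilon_0(S) > 0$ so that every virtual triangle has area $\geq \epsilon_0$. Since two consecutive intersection points of $I$ with the leaves of $F_\theta$ that is, with $\bigcup_i \gamma_i$ must be the two short sides of such a virtual triangle (two leaves of the same foliation, cut off by a transversal, with no leaf of the collection strictly between them), we get
\[
\epsilon_0 \;\le\; \operatorname{area}(\text{triangle}) \;\le\; C'\,\delta\,\ell_{\cG}(\gamma)\,|\sin\theta|,
\]
which rearranges to $\delta \geq (\epsilon_0/C')\,/\,(\ell_{\cG}(\gamma)\,|\sin\theta|)$, i.e.\ \eqref{eq:spacings} with $C = \epsilon_0/C'$. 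I also need to handle the first and last gaps $t_1 - 0$ and $L - t_k$, and the case where some $\gamma_i$ hits $I$ at an endpoint of $I$ or where $I$'s endpoints are singular; these are dealt with the same way after possibly shrinking $I$ slightly or replacing a genuine triangle by a virtual one — the no small virtual triangles statement is exactly designed to absorb these degenerations.

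The main obstacle I expect is the bookkeeping in Step~1: making precise which two leaves of $F_\theta$ to flow, for how long, and in which direction, so that the resulting object is genuinely one of the (virtual) triangles to which the Smillie--Weiss bound applies, rather than a region wrapping around the surface with large area. The clean way to do this is to take the flow segments just long enough that the two leaves through $t_j$ and $t_{j+1}$ first hit a singularity or first re-cross $I$; one then checks that in the universal-cover / developing-map picture the triangle with vertices at $t_j$, $t_{j+1}$, and the relevant singularity has the claimed small area, using that its base on $I$ has length $\delta$ and its height in the $\theta$-direction is controlled by $\ell_{\cG}(\gamma)$ (all the $\gamma_i$ are saddle connections in direction $\theta$, hence have bounded length once $\gamma$ is fixed — here one uses that on a lattice surface saddle connections in a fixed direction form finitely many parallel families on a cylinder decomposition, so their lengths are comparable to $\ell_{\cG}(\gamma)$). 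Once the triangle is correctly identified, the area estimate and the application of the no-small-virtual-triangles bound are routine.
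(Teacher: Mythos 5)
Your overall instinct---that the lattice hypothesis should enter through a uniform lower bound in the spirit of Smillie--Weiss---is reasonable, but the central construction in your sketch has a genuine gap. First, the object you describe is not a triangle: its two non-horizontal sides are subsegments of saddle connections in the \emph{same} direction $\theta$, hence parallel, so together with a subarc of $I$ they cannot close up, and there is in general no single ``relevant singularity'' forming a triangle with $t_j$ and $t_{j+1}$ (the two leaves through these points typically terminate at different singularities). Second, even if you replace the triangle by the thin strip swept out by flowing the gap $[t_j,t_{j+1}]$ in direction $\theta$, the Smillie--Weiss no-small-virtual-triangles bound does not apply to it: that bound is a statement about $|z_{\sigma_1}\wedge z_{\sigma_2}|$ for two \emph{nonparallel saddle connections} $\sigma_1,\sigma_2$ (segments between singularities), whereas your region has corners at the regular points $t_j,t_{j+1}\in I$. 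Third, your stopping rule ``flow until the leaves first hit a singularity'' does not behave as you imagine: since $t_j,t_{j+1}$ are \emph{consecutive} intersection points with the union of all saddle connections in direction $\theta$, the open gap $(t_j,t_{j+1})$ lies in the interior of a single cylinder of the direction-$\theta$ cylinder decomposition (the lattice property guarantees such a decomposition), and the interior of a cylinder is invariant under the flow in its core direction, so the flowed segment never meets a singularity at all---it just wraps around the cylinder. (Also, the ``first and last gaps'' $t_1-0$ and $L-t_k$ are not part of the statement and indeed admit no such lower bound, so trying to treat them ``the same way'' is a red flag.)

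The cylinder picture also shows how to repair your argument while keeping the Smillie--Weiss input, and this would be a legitimate alternative to the paper's proof. Each gap $t_{j+1}-t_j$ is exactly a horizontal crossing of some cylinder $\cC_i$ in direction $\theta$, hence equals $h_i/\sin\theta$ where $h_i$ is the height of $\cC_i$. Now choose a saddle connection $\sigma$ contained in the closure of $\cC_i$ and crossing it from one boundary to the other; its holonomy has component exactly $h_i$ perpendicular to $\theta$, so $|z_\gamma\wedge z_\sigma|=\ell_{\cG}(\gamma)\,h_i$, and since $\sigma$ is not parallel to $\gamma$ the no-small-virtual-triangles property gives $\ell_{\cG}(\gamma)\,h_i\ge\epsilon_0(S)$, i.e.\ $t_{j+1}-t_j\ge \epsilon_0/(\sin\theta\,\ell_{\cG}(\gamma))$, which is \eqref{eq:spacings} with $C=\epsilon_0$; note this also removes the need for your auxiliary claim that all saddle connections in direction $\theta$ have length comparable to $\ell_{\cG}(\gamma)$. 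The paper argues differently at this last step: it bounds $h_i=a_i/w_i$ from below using that the minimal cylinder area $a(S)$ over all periodic directions is positive and that $\ell_{\cG}(\gamma)\ge d(S)\max_i w_i$, both consequences of finiteness of the cusps of the Veech group. Either route is fine once the gap is identified with a cylinder crossing; your proposal as written, however, never makes that identification and applies the Smillie--Weiss bound to an object it does not cover.
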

\begin{proof} Since $S$ is a lattice surface, in the direction $\theta = \theta(\gamma)$ of $\gamma$, it is decomposed into a finite number of cylinders, $\{\cC_i\}_{i=1}^J$, where $\cC_i$ is an isometrically embedded copy of $(\R/w_i\Z) \times (0, h_i)$, where $w_i$ is the geometric length of the core curve $\gamma_i$ of $\cC_i$, which we refer to as the \emph{width} and we refer to $h_i$ as the \emph{height} of the cylinder. The intersection of the interiors of the cylinders $\cC_i$ in direction $\theta(\gamma)$ with $\gamma_0$ is a finite number of open intervals. The length of a horizontal interval crossing $\cC_i$ is
$$h_i' = h_i/|\sin(\theta)|,$$ see Figure~\ref{fig:spacings}. The spacings $t_{j+1}-t_j$ are precisely given by crossings of cylinders in direction $\theta$, that is, (some of) these $h_i' = h_i/|\sin(\theta)|$. Let $a_i = a(\cC_i) = h_i w_i$ be the area of the cylinders $\cC_i$, and let $a = \min a(\cC_i)$ (note this depends only on the lattice surface $S$ and not the direction $\theta$), so $$w_i \geq \frac{a}{h_i}$$ Further, we have a constant $d=d(S)$ so that $$\ell_{\cG}(\gamma) \geq d \max_i w_i \geq d \frac{a}{\min_i h_i} = d\frac{a}{\sin \theta} \frac{1}{\min_i h_i'}.$$ Thus, $$\min_{j} (t_{j+1}- t_j) \geq \min_i h_i' \geq \frac{ad}{\sin(\theta) \ell_{\cG}(\gamma)},$$ proving our lemma with $C=ad$. \end{proof} 

\begin{figure}
    \centering
 \begin{tikzpicture}
\draw(0,0)--(3, 0)--(2, 1)--cycle;
\draw[<-](0, -.1)--(1.2, -.1)node[right]{\tiny $h'$};
\draw[->](1.7, -.1)--(3, -.1);
\draw[thick, red](-1, -.5)--(3, 1.5);
\draw[thick, red](1, -1)--(5, 1); 
\draw[<-](2.1, 1.1)--(2.6, 0.6)node{\tiny $h$};
\draw[->](2.7, 0.5)--(3.1, 0.1);
\draw[thick] (0.4,0) arc (0:30:0.4);
\draw [color=black] (0.4,0.15) node [right] {\tiny $\theta$};

\end{tikzpicture}
    \caption{Horizontal segment crossing a cylinder. Note that the height $h = h' |\sin\theta|$, where $h'$ is the length of a crossing horizontal segment.}
    \label{fig:crossing}
\end{figure}

\paragraph*{\bf Proving Lemma~\ref{lemma:intersections}} To prove Lemma~\ref{lemma:intersections}, we identify the rectangle $R(\gamma_0, b_1, b_2) \backsimeq [0, \ell_{\cG}(\gamma_0) + 2b_2] \times [0, b_1],$ with $\gamma_0$ being identified with the interval $[b_2, \ell_{\cG}(\gamma_0) + b_2] \times \{b_1/2\}$. Then any line of angle $\theta$ crossing $\gamma_0$ at $x \in [b_2, \ell_{\cG}(\gamma_0) + b_2]$ will cross the horizontal sides of $R$ before it crosses the vertical sides, at the points $(x- (b_1/2)|\cot \theta|, 0)$ and $(x+ (b_1/2) |\cot \theta|, b_1)$ respectively, since we have chosen $b_1, b_2$ with $$b_1/b_2 \le \epsilon < 2 |\tan \theta|,$$ by our choice of $\theta$, which means that $$b_2 - (b_1/2)\cot\theta > 0,$$ so $$x - (b_1/2)|\cot\theta| > b_2 - (b_1/2)\cot\theta > 0,$$ and $$x+ (b_1/2) |\cot \theta| < \ell_{\cG}(\gamma_0) + b_2 + (b_1/2) |\cot \theta| < \ell_{\cG}(\gamma_0) + 2b_2.$$ Thus each intersection of $\gamma$ with $\gamma_0$ will be part of a segment of length $\frac{b_1}{|\sin \theta|}$ crossing $R$, and so the contribution of each crossing to $\int_{S} f d\sigma_{\gamma}$ will be $$\frac{1}{b_1}\cdot\frac{b_1}{|\sin \theta|}= \frac{1}{|\sin \theta|},$$ and so the contribution to $|\sin \theta| \cdot \int_{S} f d\sigma_{\gamma}$ will be $1$. Thus the total contribution from these full crossings of $R$ to $|\sin\theta|\cdot \int_{S} f d\sigma_{\gamma}$ will be exactly $i_{\cG}(\gamma, \gamma_0)$. To control the remaining crossings, of the left and right rectangles $[0, b_2] \times [0, b_1]$ and $[\ell_{\cG}(\gamma) + b_2, \ell_{\cG}(\gamma) + 2b_2] \times [0,b_1],$ we use Lemma~\ref{lemma:spacings}. Since by Lemma~\ref{lemma:spacings}, the crossings must be spaced by at least $\frac{C}{|\sin (\theta)| \ell_{\cG}(\gamma)}$, there number $M$ of such crossings must satisfy $$M < \frac{2b_2}{\left(\frac{C}{\sin (\theta) \ell_{\cG}(\gamma)}\right)},$$ each of which contributes at most $1$ to the $|\sin\theta| \cdot \int_{S} f d\sigma_{\gamma}$, so we have $$\left| \left|\sin\theta\right|\cdot \int_{S} f d\sigma_{\gamma}- i_{\cG}(\gamma, \gamma_0)\right| \le M \le \frac{2b_2}{\left(\frac{C}{|\sin (\theta)| \ell_{\cG}(\gamma)}\right)} \le \tilde{C} \epsilon \ell_{\cG}(\gamma),$$ where $\tilde{C} = \frac{1}{C},$ and we are using the fact that we chose $2b_2 \le \epsilon$, and $|\sin(\theta)|< 1$, thus yielding \eqref{eq:intersections} and thus Lemma~\ref{lemma:intersections}.\qed

\subsection{Proving Proposition~\ref{prop:regular}}\label{sec:proofregular}


We now show how to combine Lemmas~\ref{lemma:sectors}, ~\ref{lemma:nonequidist} and ~\ref{lemma:intersections} to prove Proposition~\ref{prop:regular} (and thus Theorem~\ref{theorem:comblengthcounting}). The set $I^{\epsilon, \gamma_0}$ of $(\epsilon, \gamma_0)$-irregular saddle connections is contained in the set of saddle connections which either make a small angle with $\gamma_0$ (that is, contained in $A^{\epsilon, \gamma_0}$) or which make a definite angle with $\gamma_0$ but nevertheless intersect it irregularly. Applying Lemma~\ref{lemma:sectors} to $A^{\epsilon, \gamma_0}$ bounds the quadratic growth of this set by a constant times $\epsilon$ and Lemma~\ref{lemma:intersections} to the remaining, we are left to consider the set $I^{g_0, \epsilon}$ of $(\epsilon, g_0)$-irregular saddle connections , where $g_0 = f_0 - \ell_{\cG}(\gamma_0)$ is our continuous mean $0$ function. Lemma~\ref{lemma:nonequidist} shows this set grows subquadratically, so we have Proposition~\ref{prop:regular}.

\subsection{The polygonal region}\label{subsec:omega} We now prove that the region $\Omegaone_{\cC}$ is a convex, centrally symmetric polygon, via the following elementary geometric lemma:

\begin{lemma}\label{lemma:omega} Let $z_0, \ldots, z_{m-1} \in \C^*$, with $z_i/z_j \notin \R$ if $i \neq j$. Then $$\Omega = \left\{z \in \C: \sum_{j=0}^{m-1} |z \wedge z_j| \le 1\right \}$$ is a convex, centrally symmetric $2m$-gon with vertices at $\pm \tau_j z_j$, where $$\tau_j = \left(\sum_{i=0, i \neq j}^{m-1} |z \wedge z_i|\right)^{-1}.$$
\end{lemma}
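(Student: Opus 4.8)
The plan is to recognize $\Omega$ as the closed unit ball of the norm $f(z) := \sum_{j=0}^{m-1}\abs{z\wedge z_j}$ on $\C\cong\R^2$ and to exploit the piecewise-linearity of $f$. First I would check that $f$ is indeed a norm: it is positively $1$-homogeneous and satisfies $f(-z)=f(z)$; it is convex, being a sum of the seminorms $z\mapsto\abs{z\wedge z_j}$ (each the absolute value of an $\R$-linear functional $\C\to\R$); and it is positive-definite because $f(z)=0$ forces $z$ to lie on every line $L_j:=\R z_j$, and these $m\ (\geq 2)$ lines are pairwise distinct by the hypothesis $z_i/z_j\notin\R$, so $z=0$. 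Hence $\Omega=\{f\le 1\}$ is convex, centrally symmetric, compact, and contains $0$ in its interior.

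Next I would extract the polygonal structure. The distinct lines $L_0,\dots,L_{m-1}$ cut $\C$ into $2m$ open sectors $\Sigma_1,\dots,\Sigma_{2m}$ (cyclically ordered, with walls lying on the $L_j$). On each closed sector $\overline{\Sigma_k}$ every term $z\wedge z_j$ keeps a fixed sign, so $f$ restricted to $\overline{\Sigma_k}$ agrees with a single nonzero $\R$-linear functional $\ell_k$, and $\Omega\cap\overline{\Sigma_k}=\{\ell_k\le 1\}\cap\overline{\Sigma_k}$. Since $0\in\operatorname{int}\Omega$, each ray from $0$ meets $\partial\Omega$ exactly once (at $u/f(u)$ for $u$ a unit vector), so $\partial\Omega\cap\overline{\Sigma_k}$ is a single nondegenerate segment $e_k$ lying on the line $\{\ell_k=1\}$, with endpoints on the two walls of $\Sigma_k$; its endpoints are distinct since they lie on two different lines through $0$. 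Gluing the $e_k$ shows $\partial\Omega$ is a closed convex polygonal curve with at most $2m$ edges, all of whose vertices lie on $\bigcup_j L_j$.

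Finally I would locate the vertices and confirm there are exactly $2m$. On $L_j$ one computes $f(tz_j)=\abs{t}\sum_{i\ne j}\abs{z_j\wedge z_i}$, so $\partial\Omega\cap L_j=\{\pm\tau_j z_j\}$ with $\tau_j=\bigl(\sum_{i\ne j}\abs{z_j\wedge z_i}\bigr)^{-1}$. To see $\tau_j z_j$ is a true vertex, note it lies on no other $L_i$ (as $L_i\cap L_j=\{0\}$ and $\tau_j z_j\ne 0$), so near it $f(z)=\abs{z\wedge z_j}+(\text{affine})$, and $\abs{z\wedge z_j}$ has the two distinct one-sided gradients $\pm(\im z_j,-\re z_j)$ on the two sides of $L_j$; thus the edges of $\Omega$ in the two sectors adjacent to $L_j$ have different directions and meet precisely at $\tau_j z_j$, which is therefore a vertex — and likewise $-\tau_j z_j$ by central symmetry. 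This yields exactly the $2m$ vertices $\pm\tau_0z_0,\dots,\pm\tau_{m-1}z_{m-1}$. The only substantive point — the rest being routine planar convexity — is this last step, ruling out that adjacent linear pieces of $f$ coincide and collapse a would-be vertex; it is precisely here that distinctness of the lines $L_j$ (hence the hypothesis $z_i/z_j\notin\R$, together with $m\ge 2$) is used.
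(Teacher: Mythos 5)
Your proof is correct and follows essentially the same route as the paper: central symmetry and convexity of $\Omega$, piecewise linearity of $z\mapsto\sum_j|z\wedge z_j|$ away from the lines $L_j$ forcing all extreme points onto those lines, and the computation $f(tz_j)=|t|\sum_{i\neq j}|z_j\wedge z_i|$ yielding $\tau_j$ (note the statement's formula has a typo, $|z\wedge z_i|$ for $|z_j\wedge z_i|$, which you implicitly correct). The only addition is your explicit verification that the two linear pieces adjacent to $L_j$ have distinct gradients, so each $\pm\tau_j z_j$ is a genuine vertex and none of the $2m$ corners collapse --- a point the paper leaves to ``direct computation.''
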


\begin{proof} Central symmetry is clear from the definition of $\Omega$. To see convexity, note that if $z, w \in \Omega$, and $t \in (0, 1)$, $$|((tz) + (1-t)w) \wedge z_j| \le t|z\wedge z_j| + (1-t)|w \wedge z_j|,$$ so \begin{align*} \sum_{j=0}^{m-1} \left|\left(tz+(1-t)w\right) \wedge z_j\right| &\le \sum_{j=0}^{m-1} t|z\wedge z_j| + (1-t)|w \wedge z_j| \\ & \le t\sum_{j=0}^{m-1} |z \wedge z_j|+ (1-t)\sum_{j=0}^{m-1} |w \wedge z_j| \\ & \le t +(1-t) = 1.
\end{align*}
Since $|z\wedge z_j|$ vanishes on the line $L_j$, the extreme points of $\Omega$ must lie on these lines. Direct computation gives the value of $\tau_j$.
\end{proof}

\section{Asymptotic constants for regular polygons}\label{sec:constants:Ngons}

\paragraph*{\bf Complexity for $N$-gons} \ In this section, we will compute the constants $c_N$ appearing in Theorem~\ref{eq:billiardcomplexity:Ngons}. We will show below that these constants will be of the following form:

\begin{lemma}\label{lemma:ngonconstants} For $N= \geq 5$, the aysmptotic complexity constants for billiards in the regular $N$-gon are given by \begin{align}\label{eq:complexity:diagonals} c_{2k} &= \frac 1 3 c_{\cC_{2k}}(S_{2k}) \\ \nonumber c_{2k+1} &= \frac 1 6 c_{\cC_{2k+1}}(S_{2k+1})\end{align} where $c_{\cC_N}(S_N)$ are the constants appearing in Theorem~\ref{theorem:comblengthcounting} (precisely, the right hand side of \eqref{eq:comblengthcounting}) for the lattice translation surface $S_N$ obtained by identifying opposite sides of the regular $N$-gon (for $N$ even), and opposite sides of the doubled regular $N$-gon (for $N$ odd). They system of saddle connections $\cC_N$ on $S_N$ is given by one representative of each set of identified pairs in either case.
\end{lemma}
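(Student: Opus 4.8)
The plan is to obtain both formulas from the general identity of \S\ref{subsec:complexity}, combined with the fact that the billiard unfolding $S_{P_N}$ is a degree-$N$ translation covering of the model surface $S_N$. Recall that \S\ref{subsec:complexity} establishes, for any rational lattice polygon $Q$, the identity $c_Q = \frac{1}{3 n_Q}\, c^0_{\cC_Q}(S_Q)$, where $n_Q = \#G(Q)$ is the number of copies of $Q$ tiling the unfolding $S_Q$ and $\cC_Q$ is the filling system obtained by lifting the sides of $Q$. So the first thing I would do is compute $n_{P_N}$: the sides of the regular $N$-gon point in $N/2$ distinct directions modulo $\pi$ when $N$ is even and in $N$ distinct directions when $N$ is odd, equally spaced in both cases, so $G(P_N)$ is the dihedral group generated by reflections in $N/2$ (resp.\ $N$) equally spaced lines through the origin, whence $n_{P_{2k}} = 2k$ and $n_{P_{2k+1}} = 2(2k+1)$.

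The main step is to relate $c^0_{\cC_{P_N}}(S_{P_N})$ to $c_{\cC_N}(S_N)$. The model surface $S_N$ of the statement is tiled by $m_N$ copies of $P_N$, with $m_N = 1$ for $N$ even (one regular $N$-gon with opposite sides identified) and $m_N = 2$ for $N$ odd (the double $N$-gon), so that $n_{P_N} = N m_N$ in both parities. I would then show, using Veech's description of these surfaces in \cite{Veech:eisenstein}*{Theorem 1.1}, that the tiling relationship between $S_{P_N}$ and $S_N$ upgrades to a translation covering $\pi \colon S_{P_N} \to S_N$ of degree $d_N = n_{P_N}/m_N = N$, unramified away from the generalized singular points (the images of the vertices of $P_N$), and with $\cC_{P_N} = \pi^{-1}(\cC_N)$. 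Given this, since a generalized saddle connection meets the generalized singular points only at its endpoints, each generalized saddle connection on $S_N$ lifts to exactly $N$ generalized saddle connections on $S_{P_N}$ of equal combinatorial length, and conversely every generalized saddle connection on $S_{P_N}$ projects to one on $S_N$; hence $N^0_{\cC_{P_N}}(S_{P_N}, L) = N\, N^0_{\cC_N}(S_N, L)$ for all $L$, and therefore $c^0_{\cC_{P_N}}(S_{P_N}) = N\, c_{\cC_N}(S_N)$, where $c_{\cC_N}(S_N)$ is understood via Theorem~\ref{theorem:comblengthcounting} applied to $S_N$ equipped with its marked points and the generalized saddle connections of \S\ref{subsec:unfolding}.

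Finally, substituting into $c_{P_N} = \frac{1}{3 n_{P_N}} c^0_{\cC_{P_N}}(S_{P_N}) = \frac{N}{3 n_{P_N}}\, c_{\cC_N}(S_N)$ and using $n_{P_{2k}} = 2k$ and $n_{P_{2k+1}} = 2(2k+1)$ gives $c_{2k} = \frac13\, c_{\cC_{2k}}(S_{2k})$ and $c_{2k+1} = \frac16\, c_{\cC_{2k+1}}(S_{2k+1})$, as claimed. I expect the covering step to be the main obstacle: one must verify that the tile-wise isometries are globally consistent, identify the ramification locus, and, above all, confirm that the affine automorphism group of $S_N$ with its marked points is exactly the group entering Veech's counting machinery, with no index-$2$ ambiguity. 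It is this ambiguity in \cite{Veech:eisenstein}*{Theorem 1.1} that is the source of the hypothesis $N \geq 5$ and of the factor-of-two correction required for $N = 3, 4$ noted after Theorem~\ref{theorem:billiardcomplexity:Ngons}.
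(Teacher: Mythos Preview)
Your proposal is correct and follows essentially the same route as the paper. Both arguments hinge on the degree-$N$ translation covering $\pi_S\colon X_N = S_{P_N}\to S_N$, the fact that the lifted side system $\cC_{P_N}$ is the full preimage of $\cC_N$, and the resulting bijection (up to multiplicity $N$) between generalized saddle connections counted by combinatorial length; the paper just organizes the bookkeeping slightly differently, passing through the pillowcase $DB_N$ and the CHT sum \eqref{eq:regularNgon:sum} explicitly rather than quoting the already-packaged formula $c_Q=\tfrac{1}{3n_Q}c^0_{\cC_Q}(S_Q)$ from \S\ref{subsec:complexity} as you do.
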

\paragraph*{\bf Notation} We recall from \S\ref{subsec:lattices}, that for a lattice translation surface $S$ and choice $\cC$ of a filling system of geodesic arcs, the combinatorial counting constant constant $c_{\cC}(S)$ is given by, for $\cC = \{\gamma_j\}_{j=0}^{m-1} \subset \SC(S)$, $$c_{\cC}(S) = c^{(1)}_{\cG}(S)\vol(\Omega_{\cC}^{(1)}),$$ where $$c^{(1)}_{\cG}(S) = \lim_{R \rightarrow \infty} \frac{N_{\cG}(S^{(1)}, R)}{\pi R^2},$$ and $$\Omega_{\cC}^{(1)} = \left\{z \in \C: \sum_{j=0}^{m-1} \left|z \wedge z_j^{(1)}\right| \le 1\right\},$$ where $$z_j^{(1)} = \int_{\gamma_j} \omega^{(1)} \in \C$$ are the holonomy vectors of the saddle connections $\gamma_j$ on the area $1$ surface $S^{(1)} = (X, \omega^{(1)})$.

\subsection{Billiards, unfolding and covers}\label{subsec:covers} Associated to the billiard $B_N$ in the polygon $P_N$ is the natural \emph{unfolding} to a translation surface $X_N$, of genus $g_N \geq 1$. $X_N$ covers another (primitive) translation surface $S_N$, and we will describe a relationship between counting generalized diagonals for $B_N$ and saddle connections on $S_N$. We let $DB_N$ denote the double of $P_N$ along its boundary which is a flat sphere with $N$ points with angle $2\pi\frac{N-2}{N}$, an $N$-gon \emph{pillowcase}. This pillowcase construction is a standard intermediate step in considering the unfolding of billiards and polyehdra, see, for example~\cite{AthreyaAulicinoHooper}*{\S2} or~\cite{HooperSchwartz}*{\S9}.

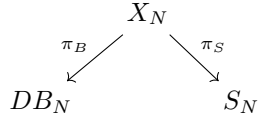
\begin{figure}[ht!]
\begin{tikzcd}[column sep=small]
& X_N \arrow[dl, "\pi_B"'] \arrow[dr, "\pi_S"] & \\
DB_N & & S_N
\end{tikzcd}
\caption{Unfolding the billiard $B_N$ and the associated flat sphere $DB_N$ to a translation surface $X_N$ which covers a primitive lattice surface $S_N$. $\pi_S$ is of degree $N$. For $N=2k$ even, $\pi_B$ is of degree $N/2 = k$, and for $N=2k+1$ odd, $\pi_B$ is of degree $N=2k+1$.}\label{fig:unfoldingNgon}
\end{figure}
\paragraph*{\bf Even-gons} For $N$ even, $DB_N$ is a $N/2$-differential on $\C P^1$ with $N = 2k$ simple poles, with angles $2\pi\frac{N-2}{N} = 2\pi\frac{k-1}{k}$. Taking the degree $k=N/2$ cover of $DB_N$ we obtain the translation surface $X_N$ corresponding to unfolding the billiard $B_N$, which has $N$ singular points with angle $\pi (N-2) = 2\pi\left(\frac{N}{2} - 1\right)$, that is $X_N \in \Omega_{g_N}\left( (N/2-2)^{N}\right)$, where $2g_N - 2 = N(N/2-2)$, so $g_N = N/2(N/2-1)+1 = (N/2-1)^2$. In this case, $X_N$ is a degree $N$ cover of the translation surface $S_N$ obtained by identifying opposite sides of the regular $N$-gon $P_N$. Here, if $N/2 = 2k$, so $N=4k$, $S_N \in \Omega_{k}(2k-2)$, and if $N/2=2k+1$, so $N=4k+2$, $S_N \in \Omega_k(k-1, k-1)$.

\paragraph*{\bf Odd-gons} For $N=2k+1$ odd $DB_N$ is a $N$-differential on $\C P^1$ with $N$ double poles, that is, points with cone angle $2\pi \frac{N-2}{N}$. Taking the degree $N$ cover of $DB_N$ (fully) ramified over these points, we obtain a surface $X_N$ corresponding to unfolding the billiard $B_N$, which has $N$ cone points of angle $2\pi(N-2)= 2\pi(2k-1)$, so $X_N \in \Omega_{g_N}( (N-3)^N)$and so has genus $g_N$ satisfying $2g_N-2 = N(2k-2) = (2k+1)(2k-2)$, so $g_N = 2k^2-k = \frac{(N-1)(N-2)}{2}$. In this case, $X_N$ is a degree $2k+1$-cover of the translation surface $S_N$ obtained by identifying opposite sides of the doubled regular $N$-gon $DP_N$, and $S_N \in \Omega_{k}(2k-2)$. 

\paragraph*{\bf Triangles and squares} For $N=3, 4$, the surface $X_N$ is of genus $1$, which needs to be treated somewhat differently. The formula given by Veech~\cite{Veech:eisenstein}*{\S 7} for general $N$ for the affine stabilizer group differs by an index $2$ factor from the actual affine group for $N=3, 4$.

\paragraph*{\bf Generalized diagonals, lifts, and saddle connections} We note that every generalized diagonal for the billiard $B_N$ has two preimages on $DB_N$. Since every singular point on $DB_N$ lifts to a singular point on $X_N$, each generalized diagonal on $B_N$ has $2\deg(\pi_B)$ preimages, all of the same length, and each of different angles, on $X_N$ (so for $N = 2k$ even, $N$ preimages, and for $N$ odd, $2N$ preimages). Moreover, each saddle connection on $S_N$ lifts to $\deg(\pi_S) = N$ distinct \emph{parallel} preimages (of the same length) on $X_N$, since all singularities on $S_N$ lift to singularities on $X_N$. By~\cite{CHT}*{Theorem 1.1}, \begin{equation}\label{eq:regularNgon:sum} \rho(P_N, t) = \sum_{j=1}^t N_{\cC}(B_N, j),\end{equation} where $N_{\cC}(B_N, j)$ is the number of generalized diagonals on $B_N$ with combinatorial length (that is, number of bounces) at most $j$. If $\gamma$ is a generalized diagonal of $B_N$ with combinatorial length $\ell_{\cC}(\gamma) = t$, it lifts to two saddle connections $\gamma^{\pm}$ on $DB_N$ with combinatorial length $t$ with respect to the system of saddle connections $\mathcal{S}_N$ corresponding to the edges of $P_N$, and any lift $\tilde\gamma^{\pm}$ to $X_N$ has combinatorial length $t$ with respect to the system $\tilde \cC_N$ of saddle connections given by the lifts of all the sides of $P_N$ to $X_N$. The image saddle connection $\pi_S(\tilde\gamma^{\pm})$ on $S_N$ of any lift under $\pi_S$ has combinatorial length $t$ with respect to the system of saddle connections $\cC_N$ on $S_N$ corresponding to the complete set $\{\gamma_0, \ldots, \gamma_{N/2-1}\}$ of representatives of the sets of parallel pairs of sides of $P_N$ (for $N$ even) and to the set of all sides $\{\gamma_0, \ldots, \gamma_{N-1}\}$ of one of the regular $N$-gons in the double regular $N$-gon $DP_N$ (for $N$ odd). Thus, $$N_{\tilde \cC_N}(X_N, t) = \deg(\pi_B) N_{\cC}(B_N, t) = \deg(\pi_S) N_{\cC_N}(X_N, t),$$ and so \begin{equation}\label{eq:unfolding:counting} N_{\cC}(B_N, t) = \begin{cases}
N_{\cC_N}(S_N, t), N \mbox{ even}\\
\frac{1}{2} N_{\cC_N}(S_N, t), N \mbox{ odd}\\
\end{cases} .\end{equation} Combining \eqref{eq:regularNgon:sum} and \eqref{eq:unfolding:counting} we obtain, as claimed at the start of this section \begin{equation}\label{eq:unfolding:constants} c_N = \lim_{t \rightarrow \infty} \frac{\rho(P_N, t)}{t^3} = \frac{1}{3} \lim_{t \rightarrow \infty} \frac{N_{\cC}(B_N, t)}{t^2} = \begin{cases}
\frac{1}{3} c_{\cC_N}(S_N), N \mbox{ even}\\
\frac{1}{6} c_{\cC_N}(S_N), N \mbox{ odd}\\
\end{cases}.\end{equation}
\begin{figure}[ht!]

\begin{tikzpicture}[scale=2.0]
\draw[thick, red] (0:1)--(45:1)--(90:1)--(135:1)--(180:1);
\draw (180:1)--(225:1)--(270:1)--(315:1)--(0:1);
\filldraw [black] (0:1) circle (.5pt);
\filldraw [black] (45:1) circle (.5pt);
\filldraw [black] (90:1) circle (.5pt);
\filldraw [black] (135:1) circle (.5pt);
\filldraw [black] (180:1) circle (.5pt);
\filldraw [black] (225:1) circle (.5pt);
\filldraw [black] (270:1) circle (.5pt);
\filldraw [black] (315:1) circle (.5pt);

\end{tikzpicture}
\caption{The system of saddle connections (in \textcolor{red}{red}) $\cC_8$ on the surface $S_8 \in \hh(2)$.}\label{fig:cN:even}
\end{figure}
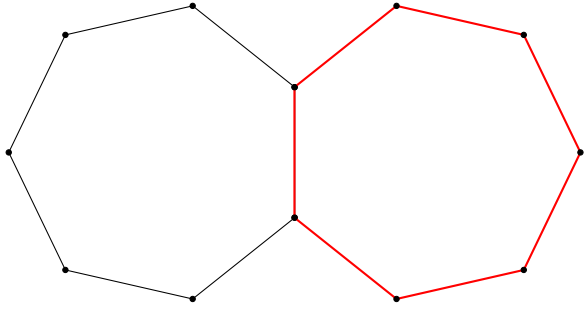
\begin{figure}[ht!]
\begin{tikzpicture}[scale=2.0]

\begin{scope}[shift={(-1.8,0)},rotate=180]
    \draw (0:1)--(360/7:1)--(720/7:1)--(1080/7:1)--(1440/7:1)--(1800/7:1)--(2160/7:1)--cycle;
    \filldraw [black] (0:1) circle (.5pt);
\filldraw [black] (360/7:1) circle (.5pt);
\filldraw [black] (720/7:1) circle (.5pt);
\filldraw [black] (1080/7:1) circle (.5pt);
\filldraw [black] (1440/7:1) circle (.5pt);
\filldraw [black] (1800/7:1) circle (.5pt);
\filldraw [black] (2160/7:1) circle (.5pt);
\end{scope}
\draw[thick, red] (0:1)--(360/7:1)--(720/7:1)--(1080/7:1)--(1440/7:1)--(1800/7:1)--(2160/7:1)--cycle;
    \filldraw [black] (0:1) circle (.5pt);
\filldraw [black] (360/7:1) circle (.5pt);
\filldraw [black] (720/7:1) circle (.5pt);
\filldraw [black] (1080/7:1) circle (.5pt);
\filldraw [black] (1440/7:1) circle (.5pt);
\filldraw [black] (1800/7:1) circle (.5pt);
\filldraw [black] (2160/7:1) circle (.5pt);
\end{tikzpicture}
\caption{The system of saddle connections (in \textcolor{red}{red}) $\cC_7$ on the surface $S_7 \in \hh(4)$.}\label{fig:cN:odd}
\end{figure}

\subsection{Areas, lengths, and normalized intersections}\label{subsec:areas} We now turn to computing the constants $c_{\cC_N}(S_N)$. We first discuss the general relationship between geometric and combinatorial asymptotic constants, and area normalizations.  In this discussion we will use $S = (X, [\omega])$ to denote a translation surface without a fixed area normalization, and, as before, $S^{(a)} = (X, \omega^{(a)})$ to denote the area $a$ realization, so $[\omega]$ denotes the equivalence class of differentials up to (positive real) scaling, and $$\area(S^{(a)}) = \frac i 2 \int_{X} \omega^{(a)} \wedge \bar \omega^{(a)} = a.$$ Let $\Sone = (X, \omegaone)$ be an area $1$ scaling of a translation surface, and for a filling system of geodesic arcs $\cC = \{\gamma_0, \ldots, \gamma_M\}$ on $S$, and any saddle connection $\gamma \in SC(S)$, recall that the \emph{regularized intersection} is given by $$\ir(\gamma, \cC) = \sum_{m=0}^M |\zone_{\gamma} \wedge \zone_m|,$$ where $$\zone_{\gamma} = \int_{\gamma} \omegaone, \zone_m = \int_{\gamma_m} \omegaone.$$ For $a>0$, consider the area $a$ surface $$\Sa = (X, \omegaa) := (X, \sqrt a \omegaone).$$ Recall that for any saddle connection $\gamma \in \SC(S)$, $$\za_{\gamma} = \int_{\gamma} \omegaa= \sqrt a \zone_{\gamma},$$ so $$\ir(\gamma, \cC) =  \frac{1}{a}\sum_{m=0}^M |\za_{\gamma} \wedge \za_m|$$ is well defined independent of $a>0$. Recall that $$\Ng(\Sone, R) = \#\{\gamma \in \SC(S): |\zone_{\gamma}| \le R\},$$ and \begin{equation}\label{eq:cgone} \cone_{\cG} = \lim_{R\rightarrow \infty} \frac{\Ng(\Sone, R)}{\pi R^2}. \end{equation} This differs from the normalization used in~\cite{Veech:eisenstein}*{Theorem 1.5} by a factor of $\pi$, which will be important in our computations moving forward. Defining \begin{equation}\label{eq:cga} \ca_{\cG} = \lim_{R\rightarrow \infty} \frac{\Ng(\Sa, R)}{\pi R^2}, \end{equation} we have \begin{align*}
    \ca_{\cG} &= \lim_{R\rightarrow \infty} \frac{\Ng(\Sa, R)}{\pi R^2} \\ &=  \lim_{R\rightarrow \infty} \frac{\Ng(\Sone, R/\sqrt a)}{\pi (R/\sqrt a)^2 a} \\ &= \cone_{\cG}/a.
\end{align*}
Therefore, we have \begin{equation}\label{eq:cgonea} \cone_{\cG} = a \ca_{\cG}, \end{equation} so $a \ca_{\cG}$ is well-defined independent of $a$. Of course, our combinatorial length counting is independent of area, and to reconcile this with our description of the asymptotic constant using geometric counting and the region $\Omega_{\cC}$, we have to understand how this region changes as we scale the surface. We define, for the unit area surface $\Sone$, \begin{equation}\label{eq:Omegaone}
    \Omegaone_{\cC} = \{z \in \C: \sum_{m=0}^{M} |z \wedge \zone_m| \le 1\}
\end{equation}
and for the scaled surface $\Sa$,
 \begin{equation}\label{eq:Omegaa}
    \Omegaa_{\cC} = \left\{z \in \C: \frac 1 a \sum_{m=0}^{M} |z \wedge \za_m| \le 1\right\} = \sqrt a \Omegaone_{\cC}
\end{equation}
Thus,  \begin{equation}\label{eq:Omegavolume}
    |\Omegaa_{\cC}|  = a|\Omegaone_{\cC}|.
\end{equation}
By Theorem~\ref{theorem:comblengthcounting} \begin{align*} \lim_{t \rightarrow \infty} \frac{\Nc(S, t)}{t^2} &= \cone_{\cG} |\Omegaone_{\cC}| \\ &= a\ca_{\cG} |\Omegaone_{\cC}| \\ &= \ca_{\cG} |\Omegaa_{\cC}|.\end{align*} Thus, we've shown that $\ca_{\cG} |\Omegaa_{\cC}|$ is independent of $a$. For our computations, it will be convenient to work with the auxiliary set \begin{equation}\label{eq:omegahat} \hat\Omega^{(a)}_{\cC} = \{z \in \C: \sum_{m=0}^{M} |z \wedge \za_m| \le 1\}. 
\end{equation} We have $$|\hat\Omega^{(a)}_{\cC}|  = \frac{1}{a}|\Omegaone_{\cC}|,$$ so \begin{align}\label{eq:ncomba}
    \lim_{t \rightarrow \infty} \frac{\Nc(S, t)}{t^2} &= \cone_{\cG} a|\hat\Omega^{(a)}_{\cC}| \\ \nonumber &= a^2 \ca_{\cG} |\hat\Omega^{(a)}_{\cC}| , \end{align} with the right-hand side being independent of $a$.

\paragraph*{\bf Inscribed $N$-gons} We will treat the cases $N=4k$, $N=4k+2$, and $N=2k+1$ all separately, as there are important subtleties which differ across these, due to the different geometries of the associated surfaces $S_N$. We fix some notation which we will use across all of these cases. For $x, N \in \R$, we write $$e_N(x) = e(2\pi i x/N),$$ and we write $e_1( \cdot) = e(\cdot)$. Let $P_N$ be the regular $N$-gon inscribed in the unit circle $\{z \in \C: |z|=1\}$ with vertices at $$\zeta_{N, j} = e_N(n), n=0, \ldots N-1.$$ The area of $P_N$ is given by $$a_N := \area(P_N) = \frac{N}{2} \sin(2\pi/N).$$
The sides of $P_N$ are given by the differences $$z_{N,j} = \zeta_{N, n+1} - \zeta_{N, n} = e_N(n+1)-e_N(n),$$ all of which have the same length, $$r_N = |z_{N,n}| = |e_N(n+1)-e_N(n)| = \sqrt{2(1- \cos(2\pi/N))}.$$

\subsection{Even-gons}\label{subsec:even} We start by computing these asymptotics for $N$-gons with $N$ even. We will first discuss some general properties, then specify to the cases $N=4k$ and $N=4k+2$.
\paragraph*{\bf The region $\Omega^{(a_N)}_{\cC_N}$, $N$ even} We work with the translation surface $S_N^{(a_N)}$ of area $a_N$ obtained by identifying opposite sides of $P_N$, and the system of saddle connections $\cC_N$ given by this collection of $N/2$ paired sides, with holonomy vectors $z_{N,n}, n=0, \ldots, N/2-1$. We have $$\hat\Omega_{\cC_N}^{(a_N)} = \left\{z \in \C: \sum_{n=0}^{N/2-1} |z \wedge z_{N, n}| \le 1\right\}.$$ \begin{lemma}\label{lemma:omega:even} For $N$ even, the region $\hat\Omega_{\cC_N}^{(a_N)}$ is a regular $N$-gon inscribed in the circle of radius $\frac{1}{2} \sec(\pi/N)$ centered at $0$, with area \begin{equation}\label{eq:omega:even}\left|\hat\Omega_{\cC_N}^{(a_N)}\right| = \frac{N}{4} \tan(\pi/N).\end{equation}   
\end{lemma}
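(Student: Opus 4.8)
The plan is to show directly that $\hat\Omega_{\cC_N}^{(a_N)}$ is cut out by $N$ lines arranged with $N$-fold rotational symmetry, hence is a regular $N$-gon, and then to compute its circumradius and area from one of the defining inequalities. First I would record the geometry of the sides $z_{N,n} = e_N(n+1) - e_N(n)$: each has length $r_N = 2\sin(\pi/N)$, and the direction of $z_{N,n}$ is $e\!\left(\frac{1}{2} + \frac{2n+1}{2N}\right)$ up to sign, so that as $n$ runs over $0, \ldots, N/2-1$ the lines $L_n = \{t z_{N,n}\}$ are exactly the $N/2$ distinct lines through the origin making angles that are consecutive multiples of $\pi/N$. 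The key structural observation is that the function $z \mapsto \Phi(z) := \sum_{n=0}^{N/2-1} |z \wedge z_{N,n}|$ is invariant under rotation of $z$ by $2\pi/N$: rotating $z$ by $2\pi/N$ permutes the values $|z \wedge z_{N,n}|$ cyclically (with a sign flip on $z_{N,n}$ wrapping around, which $|\cdot|$ absorbs), since $e_N(n)$ is sent to $e_N(n+1)$. Granting this symmetry, $\hat\Omega_{\cC_N}^{(a_N)} = \{\Phi(z) \le 1\}$ is a convex (by Lemma~\ref{lemma:omega}) centrally symmetric region invariant under the dihedral group of order $2N$, and by Lemma~\ref{lemma:omega} it is a $2m$-gon with $m = N/2$, i.e.\ an $N$-gon; combined with the rotational symmetry it must be the regular $N$-gon with vertices on the lines $L_n$ (equivalently, in the directions $\pm \tau_n z_{N,n}$).

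Next I would pin down the size. By Lemma~\ref{lemma:omega} the vertices are at $\pm \tau_j z_{N,j}$ with $\tau_j = \left(\sum_{i \neq j} |z_j \wedge z_i|\right)^{-1}$; by symmetry it suffices to compute this for one value of $j$, say $j$ chosen so $z_{N,j}$ is as convenient as possible (e.g.\ pointing in a coordinate direction after a harmless rotation). Writing $z_{N,n} = r_N e^{i\psi_n}$ with $\psi_n - \psi_j$ running over the nonzero multiples of $\pi/N$ in $(-\pi/2, \pi/2)$, we get $|z_{N,j} \wedge z_{N,n}| = r_N^2 |\sin((n-j)\pi/N)|$, so $\tau_j^{-1} = r_N^2 \sum_{\ell=1}^{N/2-1} \sin(\ell \pi / N)$ (using $\sin$ of the relevant angles, symmetric about $\ell = N/2$, being careful with whether $N/2$ contributes — for $N$ even $\ell$ ranges only up to $N/2 - 1$). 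The sum $\sum_{\ell=1}^{N/2-1}\sin(\ell\pi/N)$ is a standard closed form: it equals $\cot(\pi/2N)/? $ — concretely $\sum_{\ell=0}^{M-1}\sin(\ell\theta) = \frac{\sin(M\theta/2)\sin((M-1)\theta/2)}{\sin(\theta/2)}$, which I would apply with $\theta = \pi/N$, $M = N/2$. Then the circumradius is $R_N = |\tau_j z_{N,j}| = \tau_j r_N = \frac{1}{r_N \sum_\ell \sin(\ell\pi/N)}$; simplifying should yield $R_N = \frac{1}{2}\sec(\pi/N)$. Finally, the area of a regular $N$-gon of circumradius $R_N$ is $\frac{N}{2} R_N^2 \sin(2\pi/N)$, and substituting $R_N = \frac12\sec(\pi/N)$ and using $\sin(2\pi/N) = 2\sin(\pi/N)\cos(\pi/N)$ collapses to $\frac{N}{4}\tan(\pi/N)$, which is \eqref{eq:omega:even}.

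The main obstacle I anticipate is purely the trigonometric bookkeeping: getting the index ranges exactly right (there are $N/2$ sides, the angular gaps are $\pi/N$, and one must check no two of the $z_{N,n}$, $0 \le n \le N/2-1$, are parallel so that Lemma~\ref{lemma:omega} applies cleanly), and then evaluating $\sum_{\ell=1}^{N/2-1}\sin(\ell\pi/N)$ and simplifying $\frac{1}{r_N \sum_\ell \sin(\ell\pi/N)}$ down to $\frac12\sec(\pi/N)$ without sign errors. None of this is conceptually deep; the rotational-invariance argument for $\Phi$ is the one place where a clean observation (that rotation by $2\pi/N$ cyclically permutes the edge vectors of $P_N$ up to sign) does all the work and should be stated carefully. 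An alternative to invoking Lemma~\ref{lemma:omega} for the polygon structure would be to compute the support function of $\hat\Omega_{\cC_N}^{(a_N)}$ directly and observe it is piecewise of the right form, but piggybacking on Lemma~\ref{lemma:omega} plus symmetry is shorter.
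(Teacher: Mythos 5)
Your overall route is the same as the paper's: invoke Lemma~\ref{lemma:omega} to get a convex, centrally symmetric $2m$-gon with vertices in pairs on the lines $L_n$, use the equal spacing of the side directions (your rotation-by-$2\pi/N$ invariance of $\Phi$ is a clean way to say this, and it is correct since $z_{N,n+N/2}=-z_{N,n}$) to conclude regularity, and then compute the circumradius from $\tau_j$. The paper compresses the last step to ``a direct computation gives the radius,'' so the substance of your proposal is exactly that computation --- and there it contains a concrete error. The side vectors are $z_{N,n}=e_N(n)\left(e_N(1)-1\right)=2\sin(\pi/N)\,e^{i\left(\frac{\pi}{2}+\frac{(2n+1)\pi}{N}\right)}$, so the $N/2$ lines $L_n$, $n=0,\dots,N/2-1$, are spaced by angle $2\pi/N$, not $\pi/N$ (there are $N/2$ of them filling an angle $\pi$). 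Consequently $|z_{N,j}\wedge z_{N,i}|=r_N^2\left|\sin\!\left(\tfrac{2\pi(i-j)}{N}\right)\right|$, and, reducing the differences mod $N/2$ (each nonzero residue occurs exactly once), the correct sum is
\begin{equation*}
\tau_j^{-1}=r_N^2\sum_{d=1}^{N/2-1}\sin\!\left(\frac{2\pi d}{N}\right)=r_N^2\cot\!\left(\frac{\pi}{N}\right)=4\sin^2\!\left(\frac{\pi}{N}\right)\cot\!\left(\frac{\pi}{N}\right)=2\sin\!\left(\frac{2\pi}{N}\right),
\end{equation*}
using the Dirichlet-kernel formula with $\theta=2\pi/N$, $M=N/2$ (so $M\theta/2=\pi/2$). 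Then $R_N=\tau_j r_N=\dfrac{1}{r_N\cot(\pi/N)}=\dfrac{1}{2\cos(\pi/N)}=\tfrac12\sec(\pi/N)$, and your area step $\tfrac{N}{2}R_N^2\sin(2\pi/N)=\tfrac{N}{4}\tan(\pi/N)$ goes through.

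As written, however, your $\tau_j^{-1}=r_N^2\sum_{\ell=1}^{N/2-1}\sin(\ell\pi/N)$ does \emph{not} simplify to the stated radius: for $N=8$ it gives $R_8\approx 0.649$ instead of $\tfrac12\sec(\pi/8)\approx 0.541$, so the ``should yield $\tfrac12\sec(\pi/N)$'' step fails. This is exactly the bookkeeping you flagged as the main risk (and your parenthetical ``the angular gaps are $\pi/N$'' repeats the mistake), so it is a fixable slip rather than a conceptual gap; replacing $\pi/N$ by $2\pi/N$ in the wedge products and the sum repairs the proof and reproduces the paper's statement.
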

\begin{proof} By Lemma~\ref{lemma:omega}, $\Omega_{\cC}: = \{z \in \C: \sum_{j=0}^{m-1} |z \wedge z_j| \le 1\}$ is a convex, centrally symmetric polygon with $2m$ vertices, with two each along the lines $L_j = \{tz_j\}_{t \in \R}, j =0, \ldots m-1.$ In our setting, since we have $m-1=N/2-1$, and the defining $z_{N, n}$ are at equally spaced angles, this implies that our region $\hat\Omega_{\cC_N}^{(a_N)}$ is a regular $N = 2m$-gon, and a direct computation gives the radius.
\end{proof}

\paragraph*{\bf Cusps and counting} Having computed the volume $\left|\hat\Omega_{\cC_N}^{(a_N)}\right|$, it remains for us to compute $\cone_{\cG}(S_N)$. We will, in fact, compute the geometric counting asymptotic constant $c_{\cG}^{(a_N)}(S_N)$ using results of Veech~\cites{Veech:eisenstein, Veech:polygon}. Let $\Gamma_N = \Gamma(S_N)$ denote the Veech group of $S_N$, which, for $N$ even, is given by $$\Gamma_{N} = \Delta^+(N/2, \infty, \infty),$$ where $\Delta^+(N/2, \infty, \infty)$ is the orientation preserving subgroup of the Hecke triangle group $\Delta(N/2, \infty, \infty)$, which has $\covol(\Gamma_N) = 2(\pi - \pi/2k) = 2\pi\left( \frac{N-2}{N} \right)$. Veech~\cite{Veech:eisenstein} showed that every saddle connection $\gamma \in \SC(S_N)$ is either of the form $$\gamma = g \gamma_0, g \in \Gamma_N,$$ where $\gamma_0$ is horizontal, or of the form $$\gamma = g \gamma_{\pi/N}, g \in \Gamma_N,$$ where $\gamma_{\pi/N}$ makes angle $\pi/N$ with the horizontal. We denote the (finite) sets of horizontal and angle $\pi/N$ saddle connections by $\SC_0^{\dagger}(S_N)$ and $\SC_{\pi/N}^{\dagger}(S_N)$ respectively, and let $\SC_0(S_N) = \Gamma_N \cdot \SC_0^{\dagger}(S_N)$ and $\SC_{\pi/N}(S_N) = \Gamma_N \cdot \SC_{\pi/N}^{\dagger}(S_N)$. We have $$\SC(S_N) = \SC_0(S_N) \sqcup \SC_{\pi/N}(S_N).$$ Thus, setting \begin{align*}
N^{(0)}_{\cG}(S_N^{(a_N)}, t) &= \#\{\gamma \in SC_0(S_N): |z_{\gamma}^{(a_N)}| \le t\}    \\
N^{(\pi/N)}_{\cG}(S_N^{(a_N)}, t) &= \#\{\gamma \in SC_{\pi/N}(S_N): |z_{\gamma}^{(a_N)}| \le t\},  
\end{align*}
we have \begin{equation}\label{eq:even:cusp} c_{\cG}^{(a_N)}(S_N) = c^{(a_N)}_0(S_N) + c_{\pi/N}^{(a_N)}(S_N),\end{equation} where $$c_{\ast}^{(a_N)}(S_N) = \lim_{t \rightarrow \infty} \frac{N^{\ast}_{\cG}(S_N^{(a_N)}, t)}{\pi t^2}.$$ Veech~\cite{Veech:eisenstein} showed how to compute the constants $\tilde c_{\ast}^{(a_N)}(S_N) = \pi c_{\ast}^{(a_N)}(S_N)$, where $$\tilde c_{\ast}^{(a_N)}(S_N) = \lim_{t \rightarrow \infty} \frac{N^{\ast}_{\cG}(S_N^{(a_N)}, t)}{t^2}.$$ Veech~\cite{Veech:eisenstein}*{Proposition 3.11, \S 5} showed \begin{equation}\label{eq:veech:even}
    \tilde c_{\ast}^{(a_N)} = \frac{1}{\covol(\Gamma_N)} 2\cot(\pi/N) \left( \sum_{\gamma \in SC_{\ast}^{\dagger}(S_N)} \frac{1}{\left|z^{(a_N)}_{\gamma}\right|^2} \right)
\end{equation}

\subsubsection{$N=4k$-gons}\label{subsec:4kgons} We now specify to the case $N=4k$. Here, $$a_N = \area(P_N) = N/2 \sin(2\pi/N) = 2k \sin(\pi/2k),$$ and $$\left|\hat\Omega_{\cC_N}^{(a_N)}\right| = \frac{N}{4} \tan(\pi/N) = k \tan(\pi/4k).$$

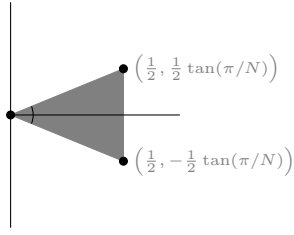
\begin{figure}[ht!]
\begin{tikzpicture}[scale=3.0]

\filldraw[gray](0,0)--(1/2, .205)node[right]{\tiny $\left(\frac 1 2, \frac 1 2 \tan(\pi/N)\right)$}--(1/2, -.205)node[right]{\tiny $\left(\frac 1 2, -\frac 1 2 \tan(\pi/N)\right)$}--cycle;
\draw (0,0)--(.75,0);
\draw(0, -.5)--(0, .5);
\filldraw [black] (1/2, .205) circle (.5pt);
\filldraw [black] (1/2, -.205) circle (.5pt);
\filldraw [black] (0,0) circle (.5pt);
\draw[thin] (0:.1) arc (0:22.5:.1);
\draw[thin] (0:.1) arc (0:-22.5:.1);
\end{tikzpicture}\caption{A piece of $\hat\Omega_{\cC_N}^{(a_N)}$, $N=8$.}\label{fig:omega4k}
\end{figure}

\subsubsection{The horizontal cusp, $N=4k$}\label{subsec:cusp:horizontal:4k} We now show how to use \eqref{eq:veech:even} to compute $c_{0}^{(a_N)} = \frac{1}{\pi} \tilde c_{0}^{(a_N)}$. We start by computing the set $\SC_0^{\dagger}(S_N).$ In the model $S_N^{(a_N)}$, obtained by identifying opposite sides of the regular $N$-gon $P_N$ inscribed in the unit circle, there is a unique singular point corresponding to the vertices $e_N(n), n=0, \ldots, N-1$ (which are all identified on $S_N$), and there are horizontal saddle connections (see Figure~\ref{fig:4k:horizontal}) $\gamma_j, j=-(k-1), \ldots, k-1$, with holonomy vectors $$z^{(a_N)}_j = \int_{\gamma_j} \omega^{(a_N)} = e_N(j)- e_N(2k-j).$$ For $j=0$, the saddle connection $\gamma_0$ has length $2$ on $S_N^{(a_N)}$, and for $j = 1, \ldots, k-1$, the two saddle connections $\gamma_{\pm j}$ each have length \begin{align*}\left|z^{(a_N)}_j\right| &= \left|e_N(j)- e_N(2k-j)\right| \\ &= \left|e_N(j) + e_N(-j)\right| \\&= 2\cos\left(\frac{2\pi j}{N}\right) \\ &= 2\cos\left(\frac{j}{2k}\pi\right).\end{align*}
\begin{figure}[ht!]
\begin{tikzpicture}[scale=3.0]
\draw (0:1)node[right]{\tiny $e_{12}(0)$}--(30:1)node[right]{\tiny $e_{12}(1)$}--(60:1)node[right]{\tiny $e_{12}(2)$}--(90:1)node[above]{\tiny $e_{12}(3)$}--(120:1)node[left]{\tiny $e_{12}(4)$}--(150:1)node[left]{\tiny $e_{12}(5)$}--(180:1)node[left]{\tiny $e_{12}(6)$}--(210:1)node[left]{\tiny $e_{12}(7)$}--(240:1)node[left]{\tiny $e_{12}(8)$}--(270:1)node[below]{\tiny $e_{12}(9)$}--(300:1)node[right]{\tiny $e_{12}(10)$}--(330:1)node[right]{\tiny $e_{12}(11)$}--cycle;\draw[red](0:1)--(180:1);
\draw[blue](30:1)--(150:1);
\draw[blue](60:1)--(120:1);
\node(0,0.2){\tiny $S_{12}$};
\draw[blue](-30:1)--(-150:1);
\draw[blue](-60:1)--(-120:1);
\filldraw [black] (0:1) circle (.5pt);
\filldraw [black] (30:1) circle (.5pt);
\filldraw [black] (60:1) circle (.5pt);
\filldraw [black] (90:1) circle (.5pt);
\filldraw [black] (120:1) circle (.5pt);
\filldraw [black] (150:1) circle (.5pt);
\filldraw [black] (180:1) circle (.5pt);
\filldraw [black] (210:1) circle (.5pt);
\filldraw [black] (240:1) circle (.5pt);
\filldraw [black] (270:1) circle (.5pt);
\filldraw [black] (300:1) circle (.5pt);
\filldraw [black] (330:1) circle (.5pt);
\end{tikzpicture}
\caption{Horizontal saddle connections on $S_{12} \in \hh(4)$. The \textcolor{red}{red} saddle connection is of length $2$ and the \textcolor{blue}{blue} saddle connections occur in pairs of the same length. Note $e_N(-j) = e_N(N-j)$.}\label{fig:4k:horizontal}
\end{figure}
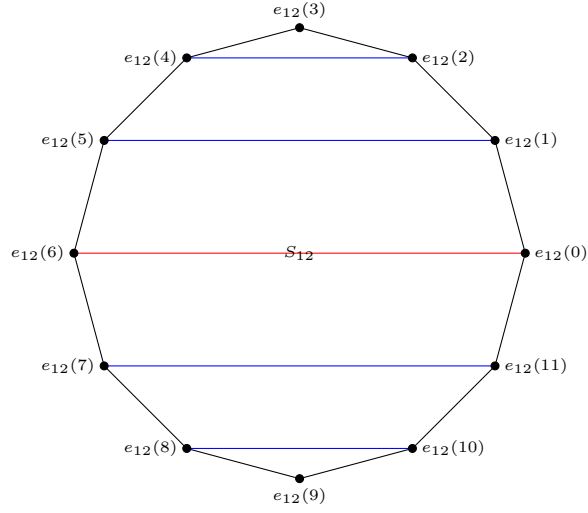
Applying \eqref{eq:veech:even}, we have \begin{align}\label{eq:veech:even:horizontal}
    c_{0}^{(a_N)} &= \frac{1}{\pi}\frac{1}{\covol(\Gamma_N)} 2\cot(\pi/N) \left( \sum_{\gamma \in SC_{0}^{\dagger}(S_N)} \frac{1}{\left|z^{(a_N)}_{\gamma}\right|^2} \right)\\ \nonumber &=  \frac{1}{\pi^2}\frac{N}{(N-2)} \cot(\pi/N)  \left( \frac{1}{2^2} + 2 \sum_{j=1}^{k-1} \frac{1}{4 \cos^2\left(\frac{j}{2k} \pi\right)} \right).
\end{align}
To compute this, we need to compute the sum \begin{equation}\label{eq:sigma0k:def} \Sigma_0(k) := \sum_{j=1}^{k-1} \frac{1}{\cos^2\left(\frac{j}{2k} \pi\right)}.\end{equation} For this, we will use the following beautiful formula, which we refer to as the \emph{fundamental identity}, which has several different interesting proofs (see, for example~\cite{Devriendt}, ~\cite{Hofbauer},~\cite{Kortram}) including one by Veech~\cite{Veech:eisenstein}*{Lemma 6.3}. 

\begin{lemma}\label{lemma:veech:identity} For $m>1$, \begin{equation}\label{eq:veech:identity}\sum_{j=1}^{m-1} \frac{1}{\sin^2(\pi j/m)} = \frac{m^2-1}{3}.\end{equation}
\end{lemma}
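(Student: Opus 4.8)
The plan is to prove the identity $\sum_{j=1}^{m-1} \frac{1}{\sin^2(\pi j/m)} = \frac{m^2-1}{3}$ by a generating-function / residue argument exploiting the fact that the numbers $\zeta^j = e^{2\pi i j/m}$, $j=1,\ldots,m-1$, are precisely the nontrivial $m$-th roots of unity. First I would write $\frac{1}{\sin^2(\pi j/m)} = \frac{-4\zeta^j}{(\zeta^j-1)^2}$, so that the sum becomes $-4 \sum_{\zeta^m=1, \zeta \neq 1} \frac{\zeta}{(\zeta-1)^2}$. The key step is then to evaluate this symmetric function of the roots of $\frac{x^m-1}{x-1} = 1 + x + \cdots + x^{m-1}$ by logarithmic differentiation: writing $P(x) = \prod_{k=1}^{m-1}(x-\zeta^k)$, one has $\frac{P'(x)}{P(x)} = \sum_k \frac{1}{x-\zeta^k}$, and differentiating once more gives $\sum_k \frac{1}{(x-\zeta^k)^2} = \left(\frac{P'}{P}\right)^2 - \frac{P''}{P}$ evaluated at a convenient point. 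Since I actually need $\sum_k \frac{\zeta^k}{(\zeta^k-1)^2}$, I would instead substitute $x=1$ into the expansion of $\sum_k \frac{1}{(1-\zeta^k)}$ and $\sum_k \frac{1}{(1-\zeta^k)^2}$ and combine them via the algebraic identity $\frac{\zeta}{(\zeta-1)^2} = \frac{1}{(1-\zeta)^2} - \frac{1}{1-\zeta}$ wait — more carefully, $\frac{\zeta}{(\zeta-1)^2} = \frac{(\zeta-1)+1}{(\zeta-1)^2} = \frac{1}{\zeta-1} + \frac{1}{(\zeta-1)^2}$, so the target sum is $\sum_k \frac{1}{\zeta^k-1} + \sum_k \frac{1}{(\zeta^k-1)^2}$.

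To compute these two sums I would use the polynomial $Q(x) = 1 + x + \cdots + x^{m-1} = \prod_{k=1}^{m-1}(x-\zeta^k)$ (up to the leading coefficient, which is $1$). Then $\sum_k \frac{1}{x-\zeta^k} = \frac{Q'(x)}{Q(x)}$ and $\sum_k \frac{1}{(x-\zeta^k)^2} = \left(\frac{Q'}{Q}\right)^2(x) - \frac{Q''}{Q}(x)$; setting $x=1$ and using $Q(1)=m$, $Q'(1) = \sum_{j=1}^{m-1} j = \frac{(m-1)m}{2}$, and $Q''(1) = \sum_{j=1}^{m-1} j(j-1) = \frac{(m-1)m(m-2)}{3}$ (or the analogous binomial-coefficient evaluation), a direct computation yields $\sum_k \frac{1}{1-\zeta^k} = \frac{m-1}{2}$ and $\sum_k \frac{1}{(1-\zeta^k)^2} = \frac{(m-1)(m-5)}{12}$ — exact constants to be verified in the writeup. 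Assembling, $\sum_k \frac{\zeta^k}{(\zeta^k-1)^2} = -\frac{m-1}{2} + \frac{(m-1)(m-5)}{12} = \frac{(m-1)(m-11)}{12}$ — again the precise arithmetic is routine and I would nail it down carefully — and multiplying by $-4$ gives $\frac{m^2-1}{3}$ after simplification.

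The main obstacle is purely bookkeeping: getting the values of $Q'(1)$ and especially $Q''(1)$ right, and correctly translating between the three equivalent forms $\frac{1}{\sin^2(\pi j/m)}$, $\frac{-4\zeta^j}{(\zeta^j-1)^2}$, and $\frac{4|\zeta^j|}{|\zeta^j-1|^2}\cdot(\cdots)$ without sign or factor-of-2 errors. An alternative, perhaps cleaner, route that I would fall back on if the residue computation gets messy is the classical partial-fractions expansion of $\pi^2/\sin^2(\pi z) = \sum_{n\in\Z} \frac{1}{(z-n)^2}$: substituting $z = j/m$ and summing over $j=1,\ldots,m-1$ reorganizes the double sum into $\sum_{n\in\Z}\frac{1}{(n/m)^2}$ minus the $j=0$ (or $j\equiv 0$) terms, and the Basel-type evaluation $\sum_{n\geq 1} 1/n^2 = \pi^2/6$ closes it; this is essentially Veech's argument in \cite{Veech:eisenstein}*{Lemma 6.3} and I would cite it if a self-contained proof is not needed. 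Since the paper already provides references \cite{Devriendt}, \cite{Hofbauer}, \cite{Kortram} for this well-known identity, it would also be legitimate to simply cite one of them, but I prefer to include the short root-of-unity computation for completeness.
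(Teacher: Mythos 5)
Your approach is viable, and it is worth noting that the paper itself gives no proof of Lemma~\ref{lemma:veech:identity}: it is quoted as a known ``fundamental identity'' with pointers to \cite{Devriendt}, \cite{Hofbauer}, \cite{Kortram} and to Lemma~6.3 of \cite{Veech:eisenstein}, so either a bare citation or a self-contained root-of-unity computation like yours is legitimate; yours is the more elementary, purely algebraic route, while your fallback via $\pi^2/\sin^2(\pi z)=\sum_{n\in\Z}(z-n)^{-2}$ and $\sum_{n\ge 1}n^{-2}=\pi^2/6$ is essentially the argument of the cited references. The skeleton of your computation is correct: with $\zeta=e^{2\pi i/m}$ one has $\sin^2(\pi j/m)=-(\zeta^j-1)^2/(4\zeta^j)$, hence $1/\sin^2(\pi j/m)=-4\zeta^j/(\zeta^j-1)^2$, the splitting $\zeta^j/(\zeta^j-1)^2=1/(\zeta^j-1)+1/(\zeta^j-1)^2$ is right, and so are $Q(1)=m$, $Q'(1)=\tfrac{m(m-1)}{2}$, $Q''(1)=\tfrac{m(m-1)(m-2)}{3}$ and $\sum_k 1/(1-\zeta^k)=\tfrac{m-1}{2}$.

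The two constants you flagged as ``to be verified'' are, however, genuinely off, and you should fix them in the writeup: $\sum_{k=1}^{m-1}(1-\zeta^k)^{-2}=\bigl(\tfrac{Q'(1)}{Q(1)}\bigr)^2-\tfrac{Q''(1)}{Q(1)}=\tfrac{(m-1)^2}{4}-\tfrac{(m-1)(m-2)}{3}=\tfrac{(m-1)(5-m)}{12}$, not $\tfrac{(m-1)(m-5)}{12}$ (check $m=2$: the sum is $\tfrac14$, not $-\tfrac14$). Consequently $\sum_{k=1}^{m-1}\zeta^k/(\zeta^k-1)^2=-\tfrac{m-1}{2}+\tfrac{(m-1)(5-m)}{12}=-\tfrac{m^2-1}{12}$, not $\tfrac{(m-1)(m-11)}{12}$. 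With the corrected value, multiplying by $-4$ gives exactly $\tfrac{m^2-1}{3}$, as required; with the values as you stated them you would get $-\tfrac{(m-1)(m-11)}{3}$, which agrees with the claimed right-hand side only at $m=5$. So the method closes the proof, but the sign in the second symmetric-function evaluation is the one point that must be corrected rather than merely ``nailed down.''
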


\paragraph*{\bf Applying the fundamental identity} Using \eqref{eq:veech:identity}, we will show \begin{equation}\label{eq:sigma0k} \Sigma_0(k) = \frac{2}{3}(k^2-1).
\end{equation}
Since $\cos(\theta) = \sin(\pi/2-\theta)$, we have \begin{align*}
    \Sigma_0(k) &= \sum_{j=1}^{k-1} \frac{1}{\cos^2(\pi j/2k)}\\
    &= \sum_{t=1}^{k-1} \frac{1}{\cos^2(\pi (k-t)/2k)} \mbox{, putting } t=k-j\\
    &= \sum_{t=1}^{k-1} \frac{1}{\sin^2(\pi t/2k)}.
\end{align*}
Since $\sin^2(\pi-\theta) = \sin^2(\theta),$ $$\sin^2(\pi t/2k) = \sin^2(\pi -\pi t/2k) = \sin^2\left(\frac{2k-t}{2k} \pi\right).$$ Thus \begin{align*}
    2\Sigma_0(k)+1 &= 1+\sum_{t=1}^{k-1} \frac{2}{\sin^2(\pi t/2k)}\\
    &=  1+ \sum_{t=1}^{k-1} \left(\frac{1}{\sin^2(\pi (k-t)/2k)}+\frac{1}{\sin^2\left(\frac{2k-t}{2k} \pi\right)} \right) \\
    &= \frac{1}{\sin^2(\pi k/2k)} + \sum_{t=1}^{k-1} \frac{1}{\sin^2(\pi t/2k)} + \sum_{t=k+1}^{2k-1} \frac{1}{\sin^2(\pi t/2k)} \\
    &= \sum_{t=1}^{2k-1} \frac{1}{\sin^2(\pi t/2k)} \\
    & = \frac{1}{3} \left( (2k)^2 -1 \right),
\end{align*}
where in the last equality we are using \eqref{eq:veech:identity} with $m=2k$. This proves \eqref{eq:sigma0k}. Plugging this in to \eqref{eq:veech:even:horizontal}, we obtain \begin{align}\label{eq:veech:even:horizontal:final} c_{0}^{(a_N)}  &=  \frac{1}{\pi^2}\frac{N}{(N-2)} \cot(\pi/N)  \left( \frac{1}{2^2} + 2 \sum_{j=1}^{k-1} \frac{1}{4 \cos^2(\pi j/2k)} \right)\\ \nonumber &= \frac{1}{\pi^2}\frac{N}{(N-2)} \cot(\pi/N)  \left( \frac{1}{4} + \frac{1}{2} \Sigma_0(k) \right) \\ \nonumber &= \frac{1}{\pi^2}\frac{N}{(N-2)} \cot(\pi/N)  \left( \frac{1}{4} + \frac{k^2-1}{3} \right).
\end{align}
\subsubsection{The $\pi/N$-cusp, $N=4k$.}\label{subsec:cusp:piN:4k} We now turn to the \emph{other} cusp of $\Gamma_N$ which corresponds to the $\Gamma_N$-orbit $\SC_{\pi/N}(S_N) = \Gamma_N \cdot \SC_{\pi/N}^{\dagger}(S_N)$ of the set $SC_{\pi/N}^{\dagger}(S_N)$ of saddle connections at angle $\pi/N$ with the horizontal. The set $\SC_{\pi/N}^{\dagger}(S_N)$ consists of saddle connections $\gamma_j$ connecting $e_N(j)$ to $e_N(2k+1-j)$, $j=-(k-1), \ldots, k$, see Figure~\ref{fig:4k:piN}. The $j=k$ and $j=-(k-1)$ cases in fact correspond to one saddle connection, corresponding to parallel sides of $S_N$, and for $j=1, \ldots, k-1$, we have two copies of each saddle connection. The lengths of these saddle connections are given by \begin{align*} \left|z_j^{(a_N)}\right| &= |e_{4k}(2k+1-j)- e_{4k}(j)| \\ &=|e_{8k}(4k+2 -2j) - e_{8k}(2j)|\\ &=|e_{8k}(4k+1 -2j) - e_{8k}(2j-1)| \mbox{ }(\mbox {multiplying by } |e_{8k}(-1)|)\\ &=|(-1)(e_{8k}(1 -2j) + e_{8k}(2j-1))| \mbox{ }(\mbox {using } e_{8k}(4k) = e(1/2) = -1) \\
&= 2\cos \left( \frac{2j-1}{4k} \pi \right).
\end{align*}
\begin{figure}[ht!]
\begin{tikzpicture}[scale=3.0]
\draw (0:1)node[right]{\tiny $e_{12}(0)$}--(30:1)node[right]{\tiny $e_{12}(1)$}--(60:1)node[right]{\tiny $e_{12}(2)$}--(90:1)node[above]{\tiny $e_{12}(3)$}--(120:1)node[left]{\tiny $e_{12}(4)$}--(150:1)node[left]{\tiny $e_{12}(5)$}--(180:1)node[left]{\tiny $e_{12}(6)$}--(210:1)node[left]{\tiny $e_{12}(7)$}--(240:1)node[left]{\tiny $e_{12}(8)$}--(270:1)node[below]{\tiny $e_{12}(9)$}--(300:1)node[right]{\tiny $e_{12}(10)$}--(330:1)node[right]{\tiny $e_{12}(11)$}--cycle;
\draw[blue](0:1)--(210:1);
\draw[blue](-30:1)--(240:1);
\draw[blue](180:1)--(30:1);
\node(0,0.2){\tiny $S_{12}$};
\draw[blue](150:1)--(60:1);
\draw[thick, red](-60:1)--(-90:1);
\draw[thick, red](120:1)--(90:1);
\filldraw [black] (0:1) circle (.5pt);
\filldraw [black] (30:1) circle (.5pt);
\filldraw [black] (60:1) circle (.5pt);
\filldraw [black] (90:1) circle (.5pt);
\filldraw [black] (120:1) circle (.5pt);
\filldraw [black] (150:1) circle (.5pt);
\filldraw [black] (180:1) circle (.5pt);
\filldraw [black] (210:1) circle (.5pt);
\filldraw [black] (240:1) circle (.5pt);
\filldraw [black] (270:1) circle (.5pt);
\filldraw [black] (300:1) circle (.5pt);
\filldraw [black] (330:1) circle (.5pt);
\end{tikzpicture}
\caption{Saddle connections of angle $\pi/12$ on $S_{12} \in \hh(4)$. The \textcolor{red}{red} saddle connection corresponds to a pair of sides which are identified, and is of length $r_{12} = |e_{12}(1)-1| = \sqrt{2(1- \cos(2\pi/12))}$ and the \textcolor{blue}{blue} saddle connections occur in pairs of the same length.}\label{fig:4k:piN}
\end{figure}
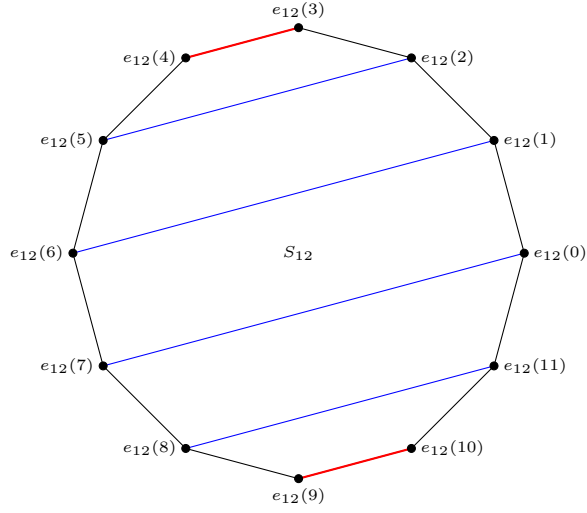
Following Veech, we have:
\begin{align}\label{eq:veech:even:piN}
    c_{\pi/N}^{(a_N)} &= \frac{1}{\pi}\frac{1}{\covol(\Gamma_N)} 2\cot(\pi/N) \left( \sum_{\gamma \in SC_{\pi/N}^{\dagger}(S_N)} \frac{1}{\left|z^{(a_N)}_{\gamma}\right|^2} \right)\\ \nonumber &=  \frac{1}{\pi^2}\frac{N}{(N-2)} \cot(\pi/N)  \left( \frac{1}{4\cos^2\left(\frac{2k-1}{4k}\pi\right)} + 2 \sum_{j=1}^{k-1} \frac{1}{4 \cos^2\left(\frac{2j-1}{4k}\pi\right)} \right) \\ \nonumber &= \frac{1}{\pi^2}\frac{N}{(N-2)} \cot(\pi/N)  \left( \frac 1 2 \sum_{j=1}^{k} \frac{1}{ \cos^2\left(\frac{2j-1}{4k}\pi\right)} - \frac{1}{4\cos^2\left(\frac{2k-1}{4k}\pi\right)} \right) 
\end{align}
We thus need to evaluate the sum \begin{equation}\label{eq:sigmapiNk:def} \Sigma_{\pi/N}(k) := \sum_{j=1}^{k} \frac{1}{\cos^2\left(\frac{2j-1}{4k}\pi\right)}.\end{equation} We will show, using \eqref{eq:veech:identity}, that \begin{equation}\label{eq:sigmapiNk} \Sigma_{\pi/N}(k)= 2k^2.    
\end{equation}
Putting $t=k+1-j$, note that as $j$ ranges from $1$ to $k$, $t$ ranges from $k$ to $1$, and \begin{align*} \cos^2\left(\frac{2j-1}{4k}\pi\right) &= \cos^2\left(\frac{2(k+1-t)-1}{4k}\pi\right)\\ &= \cos^2\left(\frac{\pi}{2} + \frac{1-2t}{4k}\pi\right) \\&= \sin^2\left(\frac{2t-1}{4k}\pi\right),
\end{align*}
so $$\Sigma_{\pi/N}(k) = \sum_{t=1}^{k} \frac{1}{\sin^2\left(\frac{2t-1}{4k}\pi\right)}.$$ Applying \eqref{eq:veech:identity} with $m=4k$, we have $$\sum_{j=1}^{4k-1} \frac{1}{\sin^2\left(\frac{j}{4k}\pi\right)} = \frac{1}{3} ((4k)^2-1),$$ and by splitting this into odd and even terms, we obtain \begin{align*} \sum_{j=1}^{4k-1} \frac{1}{\sin^2\left(\frac{j}{4k}\pi\right)} &= \sum_{t=1}^{2k} \frac{1}{\sin^2\left(\frac{2t-1}{4k}\pi\right)} + \sum_{t=1}^{2k-1} \frac{1}{\sin^2\left(\frac{2t}{4k}\pi\right)} \\& = \sum_{t=1}^{2k} \frac{1}{\sin^2\left(\frac{2t-1}{4k}\pi\right)} + \sum_{t=1}^{2k-1} \frac{1}{\sin^2\left(\frac{t}{2k}\pi\right)}.
\end{align*}
Applying \eqref{eq:veech:identity} with $m=2k$, $$\sum_{t=1}^{2k-1} \frac{1}{\sin^2\left(\frac{t}{2k}\pi\right)} = \frac 1 3 ( (2k)^2-1),$$ so \begin{align*} \sum_{t=1}^{2k} \frac{1}{\sin^2\left(\frac{2t-1}{4k}\pi\right)} &= \frac{1}{3} ((4k)^2-1) - \frac 1 3 ( (2k)^2-1)\\ &= 4k^2.\end{align*}
We now claim $$\Sigma_{\pi/N}(k) = \sum_{t=1}^{k} \frac{1}{\sin^2\left(\frac{2t-1}{4k}\pi\right)} = \frac 1 2 \sum_{t=1}^{2k} \frac{1}{\cos^2\left(\frac{2t-1}{4k}\pi\right)}.$$ To see this, note for $s=2k+1-t$, if $t$ ranges from $1$ to $k$, $s$ ranges from $2k$ to $k+1$, and \begin{align*} \sin^2 \left( \frac{2t-1}{4k} \pi \right) &= \sin^2 \left( \frac{2(2k+1-s)-1}{4k} \pi \right) \\ &= \sin^2 \left( \pi + \frac{1-2s}{4k} \pi \right) \\ &= \sin^2 \left( \frac{2s-1}{4k} \pi \right).\end{align*}
Thus, we have \begin{align}\label{eq:veech:even:piN:final} c_{\pi/N}^{(a_N)}  &=  \frac{1}{\pi^2}\frac{N}{(N-2)} \cot(\pi/N)  \left( \frac{1}{2}\Sigma_{\pi/N}(k) - \frac{1}{4 \cos^2\left(\frac{2k-1}{4k}\pi\right)} \right)\\ \nonumber &= \frac{1}{\pi^2}\frac{N}{(N-2)} \cot(\pi/N)  \left( k^2 - \frac{1}{4 \cos^2\left(\frac{2k-1}{4k}\pi\right)} \right).
\end{align}
    
\subsubsection{Combinatorial constants and $N=4k \rightarrow \infty$ asymptotics}\label{subsec:4k:final} Combining \eqref{eq:ncomba}, \eqref{eq:omega:even}, \eqref{eq:even:cusp}, \eqref{eq:veech:even:horizontal:final}, and \eqref{eq:veech:even:piN:final}, we have for $N=4k$, \begin{align}\label{eq:veech:4k:final}
    c_{\cC_{N}}(S_{N})  &= a_{N}^2 c^{(a_N)}_{\cG} \left|\hat\Omega^{(a_N)}_{\cC}\right| \\ \nonumber &= \frac{N^2}{4} \sin^2\left(\frac{2\pi}{N}\right) \frac{N}{4} \tan\left(\frac{\pi}{N}\right) \left(c^{(a_N)}_0(S_N) + c_{\pi/N}^{(a_N)}(S_N) \right) \\ \nonumber &= \frac{N^3}{16} \sin^2\left(\frac{2\pi}{N}\right) \tan\left(\frac{\pi}{N}\right) \frac{1}{\pi^2} \cot\left(\frac{\pi}{N}\right) \frac{N}{N-2} \left( \frac 1 4 + \frac 1 3 (k^2-1) + k^2 - \frac{1}{4\cos^2\left( \frac{2k-1}{4k} \pi \right)} \right) \\ \nonumber &= \frac{N^4 \sin^2\left(\frac{2\pi}{N}\right)}{16\pi^2(N-2)} \left( \frac{1}{12} N^2 - \frac{1}{4\sin^2\left(\frac{\pi}{N}\right)} - \frac{1}{12}\right),\end{align}
where in the last line we are using $$\cos^2\left( \frac{2k-1}{4k} \pi \right) = \sin^2\left(\frac{\pi}{4k}\right) = \sin^2\left(\frac{\pi}{N}\right).$$ Applying \eqref{eq:complexity:diagonals}, we have, for $N=4k$, \begin{equation}\label{eq:4k:complexity} c_N = \frac{1}{3} c_{\cC_{N}}(S_{N}) =  \frac{N^4 \sin^2\left(\frac{2\pi}{N}\right)}{48\pi^2(N-2)} \left( \frac{1}{12} N^2 - \frac{1}{4\sin^2\left(\frac{\pi}{N}\right)} - \frac{1}{12}\right).\end{equation}
We note that as $k \rightarrow \infty$, $$\sin^2\left(\frac{2\pi}{N}\right) \sim \frac{4\pi^2}{N^2}, \sin^2\left(\frac{\pi}{N}\right) \sim \frac{\pi^2}{N^2},$$ so \begin{align*} c_{4k} &=  \frac{N^4 \sin^2\left(\frac{2\pi}{N}\right)}{48\pi^2(N-2)} \left( \frac{1}{12} N^2 - \frac{1}{4\sin^2\left(\frac{\pi}{N}\right)} \right) \\ &\sim  \frac{N^2 4\pi^2}{48\pi^2(N-2)} \left( N^2\left(\frac{1}{12}- \frac{1}{4\pi^2} \right) \right) \\ &\sim \frac{N}{12}\left( N^2\left(\frac{1}{12}- \frac{1}{4\pi^2} \right)\right),\end{align*} and we can conclude
\begin{equation}\label{eq:c4k:asymptotics} \lim_{k \rightarrow \infty} \frac{c_{4k}}{(4k)^3} = \frac{1}{48} \left( \frac 1 3 - \frac{1}{\pi^2} \right).\end{equation}

\subsubsection{$N=4k+2$-gons}\label{subsec:4k+2gons} We now turn to $N=4k+2$-gons. We note the that $$a_N = \area(P_N) = \frac{N}{2} \sin \left(\frac{2\pi}{N}\right) = (2k+1) \sin\left(\frac{\pi}{2k+1}\right),$$ and the lengths of the sides $$r_N = |z_{n,N}| = |e_N(n+1)-e_N(n)| = \sqrt{2- 2\cos\left(\frac{2\pi}{N}\right)} = \sqrt{2- 2\cos\left(\frac{\pi}{2k+1}\right)}.$$ As in the $N=4k$ case, we need to compute the constants $c^{(a_N)}_0(S_N)$ and $c_{\pi/N}^{(a_N)}(S_N)$, where we recall from \eqref{eq:veech:even} that \begin{equation}\label{eq:veech:even:notilde}
    c_{\ast}^{(a_N)} = \frac{1}{\pi} \tilde c_{\ast}^{(a_N)} = \frac{1}{\pi}\frac{1}{\covol(\Gamma_N)} 2\cot(\pi/N) \left( \sum_{\gamma \in SC_{\ast}^{\dagger}(S_N)} \frac{1}{\left|z^{(a_N)}_{\gamma}\right|^2} \right)
\end{equation}

\subsubsection{The horizontal cusp, $N=4k+2$}\label{subsec:4k+2:horizontal} On $S_N^{(a_N)}$, we have horizontal saddle connections $\gamma_j$ connecting $e_N(j)$ to $e_N(2k+1-j)$, $j=0, \pm 1, \ldots, \pm k$. We note that in fact $\gamma_k = \gamma_{-k}$, since each of these corresponds to a side of $P_N$, and $\gamma_0$ has length $2$. Thus, $$SC_{0}^{\dagger}(S_N) = \{\gamma_0, \gamma_k\} \cup \{\gamma_{\pm j}: j=1, \ldots, k-1\},$$ and lengths are given by \begin{align}\label{eq:lengths:4k+2} \left|z^{(a_N)}_{\gamma_j}\right| &= |e_N(j) - e_N(2k+1-j)| \\ \nonumber &= |e_N(j) - e_N(j)e(1/2)| \\ \nonumber &= |e_N(j) + e_N(-j)| \mbox{ using } (e(1/2) = -1) \\ \nonumber &= 2\cos\left(2\pi \frac{j}{N} \right) \\ \nonumber &= 2 \cos \left(\frac{j}{2k+1} \pi \right).\end{align} 

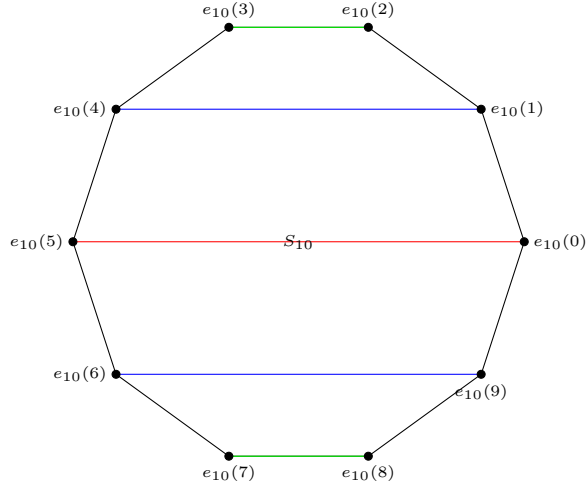
\begin{figure}[ht!]
\begin{tikzpicture}[scale=3.0]
\draw (0:1)node[right]{\tiny $e_{10}(0)$}--(36:1)node[right]{\tiny $e_{10}(1)$}--(72:1)node[above]{\tiny $e_{10}(2)$}--(108:1)node[above]{\tiny $e_{10}(3)$}--(144:1)node[left]{\tiny $e_{10}(4)$}--(180:1)node[left]{\tiny $e_{10}(5)$}--(216:1)node[left]{\tiny $e_{10}(6)$}--(252:1)node[below]{\tiny $e_{10}(7)$}--(288:1)node[below]{\tiny $e_{10}(8)$}--(324:1)node[below]{\tiny $e_{10}(9)$}--cycle;
\draw[red](0:1)--(180:1);
\draw[blue](36:1)--(144:1);
\draw[green](72:1)--(108:1);
\node(0,0.2){\tiny $S_{10}$};
\draw[blue](-36:1)--(-144:1);
\draw[green](-72:1)--(-108:1);
\filldraw [black] (0:1) circle (.5pt);
\filldraw [black] (36:1) circle (.5pt);
\filldraw [black] (72:1) circle (.5pt);
\filldraw [black] (108:1) circle (.5pt);
\filldraw [black] (144:1) circle (.5pt);
\filldraw [black] (180:1) circle (.5pt);
\filldraw [black] (216:1) circle (.5pt);
\filldraw [black] (252:1) circle (.5pt);
\filldraw [black] (288:1) circle (.5pt);
\filldraw [black] (324:1) circle (.5pt);

\end{tikzpicture}
\caption{Horizontal saddle connections on $S_{10} \in \hh(1, 1)$. The \textcolor{red}{red} saddle connection is of length $2$ and the \textcolor{blue}{blue} saddle connections occur in pairs of the same length, and the \textcolor{green}{green} saddle connection is a side of $P_{10}$.}\label{fig:4k+2:horizontal}
\end{figure}

Thus \begin{align}\label{eq:lengthsum:4k+2} \sum_{\gamma \in SC_{0}^{\dagger}(S_N)} \frac{1}{\left|z^{(a_N)}_{\gamma}\right|^2}  &= \frac{1}{\left|z^{(a_N)}_{\gamma_0}\right|^2} + \frac{1}{\left|z^{(a_N)}_{\gamma_k}\right|^2} + \sum_{j=1}^{k-1} \frac{1}{\left|z^{(a_N)}_{\gamma_j}\right|^2} \\ \nonumber &= \left( \frac{1}{2^2} + \frac{1}{4 \cos^2\left(\frac{k}{2k+1}\pi\right)} + 2\sum_{j=1}^{k-1} \frac{1}{4\cos^2\left(\frac{j}{2k+1}\pi\right)}\right) \\ \nonumber &= \left( \frac{1}{4}- \frac{1}{4\cos^2\left(\frac{k}{2k+1}\pi\right)} + \frac{1}{2}\sum_{n=1}^{k-1} \frac{1}{\cos^2\left(\frac{n}{2k+1}\pi\right)} \right).\end{align} So we need to compute the sum \begin{equation}\label{eq:sigma0stark:def} \Sigma_0^{\ast}(k) := \sum_{j=1}^{k} \frac{1}{\cos^2\left(\frac{j}{2k+1}\pi \right)}.\end{equation} We will show that \begin{equation}\label{eq:sigma0stark} \Sigma_0^{\ast}(k) = 2(k^2+k).\end{equation} Since \begin{align*} \cos^2 \left( \frac{j}{2k+1} \pi \right) &= \sin^2\left( \frac{\pi}{2} - \frac{j}{2k+1}\pi\right) \\ &=  \sin^2\left( \frac{2k+1-2j}{4k+2}\pi\right),\end{align*} putting $t=k+1-j$, and noting that $j=1, \ldots, k$ means $t= k, \ldots, 1$, and $2t-1 = 2k+2-2j-1= 2k+1-2j$, we have $$\Sigma_0^{\ast}(k) = \sum_{t=1}^{k} \frac{1}{\sin^2\left( \frac{2t-1}{4k+2}\pi\right)}.$$ By \eqref{eq:veech:identity}, with $m=N=4k+2$, \begin{align} \frac{1}{3}(N^2-1) &= \sum_{j=1}^{4k+1} \frac{1}{\sin^2\left(\frac{j}{N}\pi\right)} \\ \nonumber &= \sum_{l=1}^{2k+1} \frac{1}{\sin^2\left(\frac{2l-1}{N}\pi\right)} + \sum_{l=1}^{2k} \frac{1}{\sin^2\left(\frac{2l}{N}\pi\right)} \mbox{ (splitting into odd and even terms)} \\ \nonumber &= \sum_{l=1}^{2k+1} \frac{1}{\sin^2\left(\frac{2l-1}{N}\pi\right)} + \sum_{l=1}^{2k} \frac{1}{\sin^2\left(\frac{l}{2k+1}\pi\right)} \\ \nonumber &= \sum_{l=1}^{2k+1} \frac{1}{\sin^2\left(\frac{2l-1}{N}\pi\right)}  + \frac 1 3 \left( (2k+1)^2 -1 \right),
\end{align}
where in the last line we are using \eqref{eq:veech:identity} with $m=2k+1$. Thus \begin{align*}\sum_{l=1}^{2k+1} \frac{1}{\sin^2\left(\frac{2l-1}{N}\pi\right)} &=  \frac{1}{3}(N^2-1) - \frac 1 3 \left( (2k+1)^2 -1 \right) \\ &= \frac{1}{3} ( (4k+2)^2 - (2k+1)^2) \\ &= (2k+1)^2.\end{align*} Since  \begin{align*} \sin^2 \left( \frac{2t-1}{N} \pi) \right) &= \sin^2 \left( \frac{4k+2-(2t-1)}{N} \pi) \right) \\ &= \sin^2 \left( \frac{2(2k+2-t)-1}{N} \pi) \right),
\end{align*} and if $t=1, \ldots, k$, $s=2k+2-t$ ranges from $s=2k+1, \ldots, k+2$, and $$\sin^2 \left( \frac{2(k+1)-1}{N} \pi \right) = \sin^2 \left( \frac{2k+1}{N} \pi \right) = \sin^2(\pi/2) = 1,$$ we have \begin{align*} 2\Sigma_0^{\ast}(k)+1 &= 2\sum_{t=1}^{k} \frac{1}{\sin^2\left( \frac{2t-1}{N}\pi\right)} + 1 \\ &= \sum_{l=1}^{2k+1} \frac{1}{\sin^2\left(\frac{2l-1}{N}\pi\right)} \\ &= (2k+1)^2,
\end{align*} where the term $1$ comes from the index $l=k+1$, the indices $l=1,\ldots, k$ correspond to the indices $t$, and the indices $l=k+2, \ldots, 2k+1$ correspond to the indices $s$. Thus $$\Sigma_0^{\ast}(k) = \frac{1}{2}\left((2k+1)^2-1\right) = 2(k^2+k),$$ as we claimed in \eqref{eq:sigma0stark}. Plugging this into \eqref{eq:veech:even:notilde}, we have \begin{align}\label{eq:4k+2:horizontal:final} c_{0}^{(a_N)} &= \frac{1}{\pi}\frac{1}{\covol(\Gamma_N)} 2\cot(\pi/N) \left( \sum_{\gamma \in SC_{\ast}^{\dagger}(S_N)} \frac{1}{\left|z^{(a_N)}_{\gamma}\right|^2} \right) \\ \nonumber &= \frac{1}{\pi^2}\frac{N}{N-2}\cot\left(\frac{\pi}{N}\right) \left( \frac{1}{4} - \frac{1}{4 \cos^2\left(\frac{k}{2k+1} \pi \right)} + \frac{1}{2} \Sigma^{\ast}_0(k)\right) \\ \nonumber &= \frac{1}{\pi^2}\frac{N}{N-2}\cot\left(\frac{\pi}{N}\right) \left( \frac{1}{4} - \frac{1}{4 \cos^2\left(\frac{k}{2k+1} \pi \right)} + k^2+k\right).
\end{align}

\subsubsection{The $\pi/N$ cusp, $N=4k+2$}\label{subsec:4k+2:piN} In the direction $\pi/N$, we have $2k$ saddle connections $\gamma_j$, $j=-(k-1), \ldots, k$, connecting $e_N(j)$ to $e_N(2k+2-j)$. The saddle connections $\gamma_j$ and $\gamma_{1-j}$ have the same length, since \begin{align*}\left|z^{(a_N)}_{\gamma_{1-j}} \right| &= |e_N(1-j) - e_N(2k+2- (1-j))| \\ &= |e_N(1-j) - e_N(2k+1 +j)| \\ &= |e_N(1-j) - e_N(2k+1)e_N(j)| \\ &= |e_N(1-j) + e_N(j)| \end{align*} and \begin{align*}\left|z^{(a_N)}_{\gamma_j}\right| &= |e_N(j) - e_N(2k+2-j)| \\&= |e_N(j) - e_N(2k+1 + 1- j)| \\ &= |e_N(j) - e_N(2k+1)e_N(1-j)| \\ &= |e_N(j) + e_N(1-j)|.\end{align*} We can refine this length computation by multiplying by $e_N(-1/2)$, so we have \begin{align*} \left|z^{(a_N)}_{\gamma_j}\right| = \left|z^{(a_N)}_{\gamma_{1-j}}\right| &= |e_N(j) + e_N(1-j)|\\ &= |e_N(j-1/2) + e_N(1/2-j)| \\ &= 2\cos \left( \frac{2j-1}{N} \pi \right).    
\end{align*}

\begin{figure}[ht!]
\begin{tikzpicture}[scale=3.0]
\draw (0:1)node[right]{\tiny $e_{10}(0)$}--(36:1)node[right]{\tiny $e_{10}(1)$}--(72:1)node[above]{\tiny $e_{10}(2)$}--(108:1)node[above]{\tiny $e_{10}(3)$}--(144:1)node[left]{\tiny $e_{10}(4)$}--(180:1)node[left]{\tiny $e_{10}(5)$}--(216:1)node[left]{\tiny $e_{10}(6)$}--(252:1)node[below]{\tiny $e_{10}(7)$}--(288:1)node[below]{\tiny $e_{10}(8)$}--(324:1)node[below]{\tiny $e_{10}(9)$}--cycle;
\draw[blue](0:1)--(216:1);
\draw[blue](36:1)--(180:1);
\draw[blue](72:1)--(144:1);
\node(0,0.2){\tiny $S_{10}$};
\draw[blue](-36:1)--(252:1);
\filldraw [black] (0:1) circle (.5pt);
\filldraw [black] (36:1) circle (.5pt);
\filldraw [black] (72:1) circle (.5pt);
\filldraw [black] (108:1) circle (.5pt);
\filldraw [black] (144:1) circle (.5pt);
\filldraw [black] (180:1) circle (.5pt);
\filldraw [black] (216:1) circle (.5pt);
\filldraw [black] (252:1) circle (.5pt);
\filldraw [black] (288:1) circle (.5pt);
\filldraw [black] (324:1) circle (.5pt);

\end{tikzpicture}
\caption{Saddle connections of angle $\pi/10$ on $S_{10} \in \hh(1, 1)$. Note that the indices of the vertices add to $6 \pmod{10}$.}\label{fig:4k+2:piN}
\end{figure}
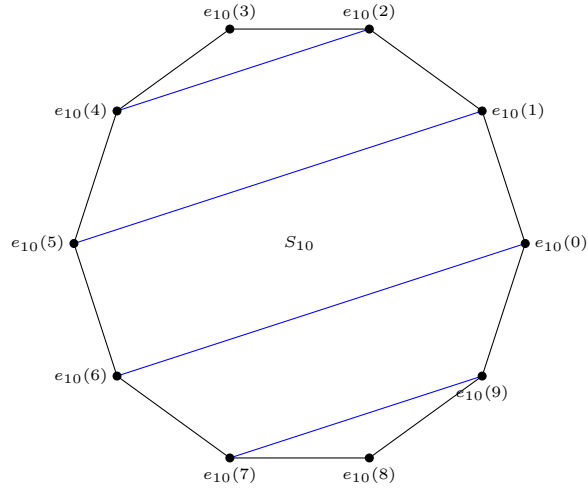
So we have \begin{align}\label{eq:veech:4k+2:piN}
     c_{\pi/N}^{(a_N)} &=  \frac{1}{\pi^2}\frac{N}{N-2}\cot(\pi/N) \left( \sum_{\gamma \in SC_{\pi/N}^{\dagger}(S_N)} \frac{1}{\left|z^{(a_N)}_{\gamma}\right|^2} \right) \\ \nonumber &= \frac{1}{\pi^2}\frac{N}{N-2}\cot(\pi/N) \left( \sum_{j=1}^k \frac{2}{4 \cos^2 \left( \frac{2j-1}{N} \pi \right)} \right) \\ \nonumber &= \frac{1}{\pi^2}\frac{N}{N-2}\cot(\pi/N) \left( \frac 1 2 \sum_{j=1}^k \frac{1}{ \cos^2 \left( \frac{2j-1}{N} \pi \right)} \right)
\end{align}
Thus, we must compute \begin{equation}\label{eq:sigmapiNk:4k+2:def} \Sigma_{\pi/N}^{\ast}(k) : = \sum_{j=1}^k \frac{1}{ \cos^2 \left( \frac{2j-1}{N} \pi \right)}.
\end{equation}
We claim  \begin{equation}\label{eq:sigmapiNk:4k+2:formula} \Sigma_{\pi/N}^{\ast}(k) = \frac 2 3 (k^2 + k). \end{equation}
To prove this, first note for $t=k-j+1$, as $j$ ranges from $1, \ldots, k$, $t$ ranges from $k, \ldots, 1$, and $$\frac{2j-1}{N} = \frac{2(k-t+1)-1}{N} = \frac{2k+1-2t}{N} = \frac{1}{2} - \frac{2t}{N},$$ so \begin{align*}\cos^2 \left( \frac{2j-1}{N} \pi \right) &= \cos^2 \left( \frac{\pi}{2}- \frac{2t}{N} \pi \right) \\ &=  \sin^2 \left( \frac{2t}{N} \pi \right) \\& = \sin^2\left(\frac{t}{2k+1}\pi\right).\end{align*} Since $$\sin^2\left(\frac{2k+1-t}{2k+1}\pi\right)=\sin^2\left(\frac{t}{2k+1}\pi\right),$$ and as $t$ ranges from $1, \ldots, k$, $2k+1-t$ ranges from $2k, \ldots, k+1$, we have $$2\Sigma_{\pi/N}^{\ast}(k) = \sum_{t=1}^{2k} \frac{1}{ \sin^2 \left( \frac{t}{2k+1} \pi\right)} = \frac{1}{3}\left( (2k+1)^2-1\right),$$ where in the last equality we are using \eqref{eq:veech:identity} with $m=2k+1$, which proves \eqref{eq:sigmapiNk:4k+2:formula}. So we have \begin{align}\label{eq:veech:4k+2:piN:final}
     c_{\pi/N}^{(a_N)} &= \frac{1}{\pi^2}\frac{N}{N-2}\cot(\pi/N) \left( \frac 1 2 \sum_{j=1}^k \frac{1}{ \cos^2 \left( \frac{2j-1}{N} \pi \right)} \right) \\ \nonumber &= \frac{1}{\pi^2}\frac{N}{N-2}\cot(\pi/N)\left( \frac{1}{3}(k^2+k)\right).
\end{align}

\subsubsection{Combinatorial constants and $N=4k+2 \rightarrow \infty$ asymptotics}\label{subsec:4k+2:final} Combining \eqref{eq:4k+2:horizontal:final} and \eqref{eq:veech:4k+2:piN:final}, we have for $N=4k+2$, \begin{align}\label{eq:veech:4k+2:final}
    c_{\cC_{N}}(S_{N})  &= a_{N}^2 c^{(a_N)}_{\cG} \left|\hat\Omega^{(a_N)}_{\cC}\right| \\ \nonumber &= \frac{N^2}{4} \sin^2\left(\frac{2\pi}{N}\right) \frac{N}{4} \tan\left(\frac{\pi}{N}\right) \left(c^{(a_N)}_0(S_N) + c_{\pi/N}^{(a_N)}(S_N) \right) \\ \nonumber &= \frac{N^3}{16} \sin^2\left(\frac{2\pi}{N}\right) \tan\left(\frac{\pi}{N}\right) \frac{1}{\pi^2} \cot\left(\frac{\pi}{N}\right) \frac{N}{N-2} \left( \left( \frac{1}{4} - \frac{1}{4 \cos^2\left(\frac{k}{2k+1} \pi \right)} + k^2+k\right) + \frac{1}{3}(k^2+k) \right) \\ \nonumber &= \frac{N^4 \sin^2\left(\frac{2\pi}{N}\right)}{16\pi^2(N-2)}\left( \left( \frac{1}{4} - \frac{1}{4 \cos^2\left(\frac{k}{2k+1} \pi \right)}\right) + \frac{4}{3}(k^2+k) \right) \\ \nonumber &= \frac{N^4 \sin^2\left(\frac{2\pi}{N}\right)}{16\pi^2(N-2)}\left( \left( \frac{1}{4} - \frac{1}{4 \sin^2\left(\frac{\pi}{N}  \right)}\right) + \frac{4}{3}(k^2+k) \right),\end{align} where in the last line we are using $$\cos^2\left(\frac{k}{2k+1} \pi \right) = \sin^2\left(\frac{\pi}{2}- \frac{k\pi}{2k+1}  \right) = \sin^2\left(\frac{\pi}{4k+2}  \right)=\sin^2\left(\frac{\pi}{N}  \right).$$
Since $N^2 = 16k^2 + 16k + 4,$ $$k^2+k = \frac{N^2}{16}- \frac{1}{4},$$ so applying \eqref{eq:complexity:diagonals}, we have, for $N=4k+2$, \begin{align}\label{eq:4k+2:complexity} c_N = \frac{1}{3} c_{\cC_{N}}(S_{N}) &=  \frac{N^4 \sin^2\left(\frac{2\pi}{N}\right)}{48\pi^2(N-2)}\left( \left( \frac{1}{4} - \frac{1}{4 \sin^2\left(\frac{\pi}{N}  \right)}\right) + \frac{4}{3}\left(\frac{N^2}{16}- \frac{1}{4}\right) \right) \\ \nonumber &=\frac{N^4 \sin^2\left(\frac{2\pi}{N}\right)}{48\pi^2(N-2)}\left( \frac{1}{12}N^2 - \frac{1}{4 \sin^2\left(\frac{\pi}{N}  \right)}- \frac{1}{12}\right) .\end{align}
As in the $N=4k$ case, we can conclude
\begin{equation}\label{eq:c4k+2:asymptotics} \lim_{k \rightarrow \infty} \frac{c_{4k+2}}{(4k+2)^3} = \frac{1}{48} \left( \frac 1 3 - \frac{1}{\pi^2} \right).\end{equation}

\subsection{Odd-gons}\label{subsec:odd} We now turn to odd $N=2k+1$, and to computing the asymptotic complexity of the billiard $B_N$. As discussed in \S\ref{subsec:covers}, the associated lattice surface $S_N^{(2a_N)}$ we will be working with is obtained by identifying opposite sides of a \emph{doubled} regular $N$-gon. For concreteness, we will consider one copy of $P_N$ inscribed in the unit circle with vertices at $e_N(j), j=0, \ldots, N-1=2k$, and the other copy obtained by reflecting the initial copy over the vertical side, connecting $e_N(k)$ to $e_N(k+1)$ (see Figure~\ref{fig:2k+1:vertical} below). The Veech group $\Gamma(S_N) = \Gamma_N$ is given by $\Gamma_N = \Delta^+(2, N, \infty)$, and  has  covolume $$\covol(\Gamma_N) = 2\left(\pi- \frac{\pi}{2} - \frac{\pi}{N}\right) = \pi\frac{N-2}{N}. $$ We have \begin{align}\label{eq:odd:combconst}c_{\cC_N}(S_N) &= \lim_{t \rightarrow \infty} \frac{N_{\cC_N}(S_N, t)}{t^2} \\ &= (2a_N) \left|\hat{\Omega}_{\cC_N}^{(2a_N)}\right| \cone_{\cG}(S_N)\\ \nonumber  &= (2a_N) \left|\hat{\Omega}_{\cC_N}^{(2a_N)}\right| (2a_N) c^{(2a_N)}_{\cG}(S_N).\end{align}
In the $N=2k+1$-setting, the Veech group $\Gamma_N$ only has one cusp, and thus we can take any saddle connection direction as its representative. It will be convenient to use the \emph{vertical} direction, that is, we have $\SC(S_N) = \Gamma_N \cdot \SC_{\pi/2}^{\dagger}(S_N),$ where $\SC_{\pi/2}^{\dagger}(S_N)$ is the set of saddle connections on $S_N^{(2a_N)}$ of angle $\pi/2$ with the horizontal. Following Veech~\cite{Veech:eisenstein}, we have \begin{equation}\label{eq:veech:counting:odd} c_{\cG}^{(2a_N)} = \frac{1}{\pi}\frac{1}{\covol(\Gamma_N)} 2\cot(\pi/N) \left( \sum_{\gamma \in \SC_{\pi/2}^{\dagger}\left(S_N^{(2a_N)}\right)} \frac{1}{\left|z^{(2a_N)}_{\gamma}\right|^2} \right). \end{equation}

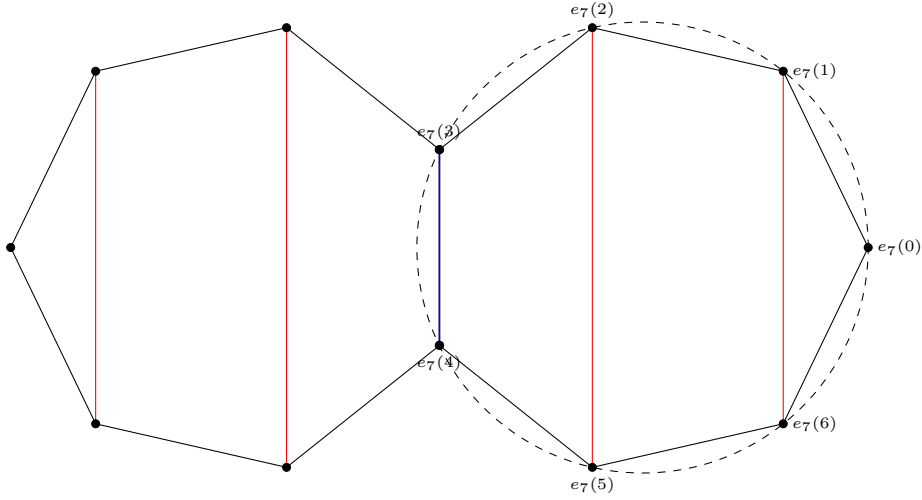
\begin{figure}
\begin{tikzpicture}[scale=3.0]
\draw (0:1)node[right]{\tiny $e_7(0)$} --(360/7:1)node[right]{\tiny $e_7(1)$}--(720/7:1)node[above]{\tiny $e_7(2)$}--(1080/7:1)node[above]{\tiny $e_7(3)$}--(1440/7:1)node[below]{\tiny $e_7(4)$}--(1800/7:1)node[below]{\tiny $e_7(5)$}--(2160/7:1)node[right]{\tiny $e_7(6)$}--cycle;
\draw[red](360/7:1)--(2160/7:1);
\draw[red](720/7:1)--(1800/7:1);
\draw[blue](1080/7:1)--(1440/7:1);
    \filldraw [black] (0:1) circle (.5pt);
\filldraw [black] (360/7:1) circle (.5pt);
\filldraw [black] (720/7:1) circle (.5pt);
\filldraw [black] (1080/7:1) circle (.5pt);
\filldraw [black] (1440/7:1) circle (.5pt);
\filldraw [black] (1800/7:1) circle (.5pt);
\filldraw [black] (2160/7:1) circle (.5pt);
\draw[dashed](0,0) circle(1);

\begin{scope}[shift={(-1.8,0)},rotate=180]
    \draw (0:1)--(360/7:1)--(720/7:1)--(1080/7:1)--(1440/7:1)--(1800/7:1)--(2160/7:1)--cycle;
    \draw[red](360/7:1)--(2160/7:1);
\draw[red](720/7:1)--(1800/7:1);
\draw[blue](1080/7:1)--(1440/7:1);
    \filldraw [black] (0:1) circle (.5pt);
\filldraw [black] (360/7:1) circle (.5pt);
\filldraw [black] (720/7:1) circle (.5pt);
\filldraw [black] (1080/7:1) circle (.5pt);
\filldraw [black] (1440/7:1) circle (.5pt);
\filldraw [black] (1800/7:1) circle (.5pt);
\filldraw [black] (2160/7:1) circle (.5pt);
\end{scope}
\end{tikzpicture}
\caption{Vertical saddle connections on the surface $S_{7} \in \hh(4)$. Note that each of the \textcolor{red}{red} saddle connections occurs in pairs of the same length, while the \textcolor{blue}{blue} saddle connection is shared between the two copies of $P_7$ which make up $S_7$.}\label{fig:2k+1:vertical}
\end{figure}
\paragraph*{\bf Vertical saddle connections} The vertical saddle connections are given by $\gamma^{\pm}_j, j=1, \ldots, k$ on $S^{(2a_N)}_N$, which connect (in each copy of $P_N$) $e_N(j)$ to $e_N(N-j)$, with the $j=k$ copy shared between the two copies (that is $\gamma^+_k = \gamma^-_k$). We have $$\left|z^{(2a_N)}_{\gamma_j}\right| = |e_N(j) - e_N(N-j)|= 2\sin \left(2\pi \frac{j}{N} \right).$$ So we have \begin{align}\label{eq:veech:cusp:odd} c_{\cG}^{(2a_N)} &= \frac{1}{\pi}\frac{1}{\covol(\Gamma_N)} 2\cot(\pi/N) \left( \sum_{\gamma \in \SC_{\pi/2}^{\dagger}\left(S_N^{(2a_N)}\right)} \frac{1}{\left|z^{(2a_N)}_{\gamma}\right|^2} \right) \\ \nonumber &= \frac{1}{\pi^2} \frac{N}{N-2} 2\cot\left(\frac{\pi}{N}\right) \left( \sum_{j=1}^{k-1} \frac{2}{4 \sin^2\left(2\pi \frac{j}{N} \right)} + \frac{1}{4\sin^2\left(2\pi \frac{k}{N} \right)}\right) \\ \nonumber& = \frac{1}{\pi^2} \frac{N}{N-2} 2\cot\left(\frac{\pi}{N}\right) \left( \frac 1 2 \sum_{j=1}^{k} \frac{1}{ \sin^2\left(2\pi \frac{j}{N} \right)} - \frac{1}{4\sin^2\left(2\pi \frac{k}{N} \right)}\right) \nonumber \\ &= \frac{1}{\pi^2} \frac{N}{N-2} \cot\left(\frac{\pi}{N}\right) \left( \sum_{j=1}^{k} \frac{1}{ \sin^2\left(2\pi \frac{j}{N} \right)} - \frac{1}{2\sin^2\left(2\pi \frac{k}{N} \right)}\right). \end{align} So we need to compute the sum \begin{equation}\label{eq:sigmapi2starstar:def} \Sigma^{\ast\ast}_{\pi/2}(k):= \sum_{j=1}^{k} \frac{1}{ \sin^2\left(2\pi \frac{j}{2k+1} \right)}.
\end{equation}
We claim \begin{equation}\label{eq:sigmapi2starstar:value} \Sigma^{\ast\ast}_{\pi/2}(k) = \frac{2}{3}(k^2+k).
\end{equation}
To see this, note that for $t=k-j+1$, we have, as $j$ ranges from $1, \ldots, k$, $t$ ranges from $k, \ldots, 1$, and $$\sin^2\left(\pi \frac{2j}{N} \right) = \sin^2\left(\pi \frac{2t-1}{N} \right),$$ since \begin{align*}\frac{\pi}{N} (2t-1) &= \frac{\pi}{N} \left( 2(k-j+1)-1 \right) \\&= \pi - \pi \frac{2j}{N}. \end{align*} Thus \begin{align*}
 2\Sigma_{\pi/2}^{\ast\ast} &=    \sum_{j=1}^{k} \frac{1}{ \sin^2\left(\pi \frac{2j}{2k+1} \right)} + \sum_{t=1}^{k} \frac{1}{ \sin^2\left(\pi \frac{2t-1}{2k+1} \right)} \\ &= \sum_{j=1}^{2k} \frac{1}{ \sin^2\left(\pi \frac{j}{2k+1} \right)} \\ &= \frac{1}{3} \left( (2k+1)^2 - 3 \right),
\end{align*}
where in the last line we are using \eqref{eq:veech:identity} with $m=2k+1$. Thus, \begin{equation}\label{eq:comb:oddgon} c_{\cG}^{(2a_N)} = 
    \frac{1}{\pi^2} \frac{N}{N-2} \cot\left(\frac{\pi}{N}\right) \left( \frac 2 3 (k^2+k) - \frac{1}{2\sin^2\left(2\pi \frac{k}{N} \right)}\right).
\end{equation}
It remains to compute $\left|\hat{\Omega}_{\cC_N}^{(2a_N)}\right|,$ where $$\hat{\Omega}_{\cC_N}^{(2a_N)} = \left\{z \in \C: \sum_{j=0}^{N-1} |z \wedge z_{N,j}| \le 1 \right\},$$ where $z_{N, j} = e_N(j+1) - e_N(j).$ Direct computation shows that $\hat{\Omega}_{\cC_N}^{(2a_N)}$ is a regular $2N$-gon inscribed in a circle of radius $$\rho_N = \left(\sum_{j=0}^{N-1}\left|\cos \left( 2\pi \frac{j+1}{N} \right) - \cos \left( 2\pi \frac{j}{N} \right)\right| \right)^{-1}.$$ Using the fact that $\cos$ is decreasing on $(0, \pi)$, symmetry, and telescoping sums, we have \begin{align*} \rho_N^{-1} &= \sum_{j=0}^{N-1}\left|\cos \left( 2\pi \frac{j+1}{N} \right) - \cos \left( 2\pi \frac{j}{N}\right)\right| \\ &= 2 \sum_{j=0}^{k-1} \left(\cos \left( 2\pi \frac{j+1}{N} \right) - \cos \left( 2\pi \frac{j}{N}\right)\right) \\& = 2\left(1- \cos\left(2\pi \frac{k}{2k+1}\right)\right).
\end{align*}
Thus \begin{equation}\label{eq:omegahat2an:odd} \left|\hat{\Omega}_{\cC_N}^{(2a_N)}\right| = \rho_N^2 a_{2N} = \frac{1}{4}\left(1- \cos\left(2\pi \frac{k}{2k+1}\right)\right)^{-2} \frac{2N}{2}\sin\left( \frac{2\pi}{2N}\right).
\end{equation} 

\subsubsection{Combinatorial constants and $N=2k+1 \rightarrow \infty$ asymptotics}\label{subsec:2k+1:final} Putting this all together, we have

\begin{align}\label{eq:veech:2k+1:final} c_{\cC_N}(S_N) &= (2a_N) \left|\hat{\Omega}_{\cC_N}^{(2a_N)}\right| (2a_N) c^{(2a_N)}_{\cG}(S_N) \\ \nonumber &= 4a_N^2 \left|\hat{\Omega}_{\cC_N}^{(2a_N)}\right| c^{(2a_N)}_{\cG}(S_N) \\ \nonumber &= 4 \frac{N^2}{4} \sin^2\left(\frac{2\pi}{N}\right)   \frac{1}{\pi^2} \frac{N}{N-2} \cot\left(\frac{\pi}{N}\right) \left( \frac 2 3 (k^2+k) - \frac{1}{2\sin^2\left(2\pi \frac{k}{N} \right)}\right) \\ \nonumber & \cdot \frac{1}{4}\left(1- \cos\left(2\pi \frac{k}{2k+1}\right)\right)^{-2} \frac{2N}{2}\sin\left( \frac{2\pi}{2N}\right) \\ \nonumber &= \frac{1}{4\pi^2}\frac{N^4}{N-2} \sin^2\left(\frac{2\pi}{N}\right)  \cot\left(\frac{\pi}{N}\right)\sin\left( \frac{\pi}{N}\right)\frac{\left( \frac 2 3 (k^2+k) - \frac{1}{2\sin^2\left(2\pi \frac{k}{N} \right)}\right)}{\left(1- \cos\left(2\pi \frac{k}{2k+1}\right)\right)^{2}},
\end{align}
and
\begin{align}\label{eq:complexity:2k+1:final} c_{N} &= \frac{1}{6} c_{\cC_N}(S_N) \\ \nonumber  &=  \frac{1}{24\pi^2} \frac{N^4}{N-2} \sin^2\left(\frac{2\pi}{N}\right)  \cot\left(\frac{\pi}{N}\right)\sin\left( \frac{\pi}{N}\right)\frac{\left( \frac 2 3 (k^2+k) - \frac{1}{2\sin^2\left(2\pi \frac{k}{N} \right)}\right)}{\left(1- \cos\left(2\pi \frac{k}{2k+1}\right)\right)^{2}}.
\end{align}
Since \begin{align*} \sin^2\left(2\pi \frac{k}{N} \right) &= \sin^2\left(\frac{\pi}{N}\right) \\ \cos\left(2\pi \frac{k}{2k+1}\right) &=- \cos \left( \frac{\pi}{N} \right)\\ \cot \left( \frac{\pi}{N} \right) \sin \left( \frac{\pi}{N} \right) & = \cos \left( \frac{\pi}{N} \right) \\ k^2+k &= \frac{1}{4} \left(N^2-1)\right), \end{align*} we can simplify the above expression into 
\begin{equation}\label{eq:complexity:2k+1:final:simplified} c_{N} = \frac{N^4 \sin^2\left(\frac{2\pi}{N}\right)}{48\pi^2(N-2)} \left( \frac{1}{12} N^2 - \frac{1}{4\sin^2\left(\frac{\pi}{N}\right)} - \frac{1}{12}\right) \frac{4\cos \left( \frac{\pi}{N} \right)}{\left(1+\cos \left( \frac{\pi}{N} \right)\right)^2}\end{equation}
As in setting of $N$ even, we have $$\lim_{N\rightarrow \infty} \frac{\frac{N^4 \sin^2\left(\frac{2\pi}{N}\right)}{48\pi^2(N-2)} \left( \frac{1}{12} N^2 - \frac{1}{4\sin^2\left(\frac{\pi}{N}\right)} - \frac{1}{12}\right)}{N^3}= \frac{1}{48} \left( \frac 1 3 - \frac{1}{\pi^2} \right),$$ and since $$\lim_{N \rightarrow \infty} \frac{4\cos \left( \frac{\pi}{N} \right)}{\left(1+\cos \left( \frac{\pi}{N} \right)\right)^2} = 1$$ we have $$\lim_{k \rightarrow \infty} \frac{c_{2k+1}}{(2k+1)^3} = \frac{1}{48} \left( \frac 1 3 - \frac{1}{\pi^2} \right),$$ completing the proof of \eqref{eq:cN:asymptotics}, and thus of Theorem~\ref{theorem:billiardcomplexity:Ngons}.
\appendix

\begin{bibdiv}
\begin{biblist}

\bib{AthreyaAulicinoHooper}{article}{
   author={Athreya, Jayadev S.},
   author={Aulicino, David},
   author={Hooper, W. Patrick},
   title={Platonic solids and high genus covers of lattice surfaces},
   note={With an appendix by Anja Randecker},
   journal={Exp. Math.},
   volume={31},
   date={2022},
   number={3},
   pages={847--877},
   issn={1058-6458},
   review={\MR{4477409}},
   doi={10.1080/10586458.2020.1712564},
}

\bib{AEZ}{article}{
   author={Athreya, Jayadev S.},
   author={Eskin, Alex},
   author={Zorich, Anton},
   title={Right-angled billiards and volumes of moduli spaces of quadratic differentials on $\mathbb C\rm P^1$},
   journal={Ann. Sci. Éc. Norm. Supér. (4)},
   volume={49},
   date={2016},
   pages={76},
   issn={0012-9593},
   review={\MR{3592359}},
   doi={10.24033/asens.2310},
}

\bib{AthreyaMasurBook}{book}{
   author={Athreya, Jayadev S.},
   author={Masur, Howard},
   title={Translation surfaces},
   series={Graduate Studies in Mathematics},
   volume={242},
   publisher={American Mathematical Society, Providence, RI},
   date={[2024] \copyright 2024},
   pages={xi+179},
   isbn={978-1-4704-7655-7},
   isbn={[9781470476779]},
   isbn={[9781470476762]},
   review={\MR{4783430}},
   doi={10.1090/gsm/242},
}

\bib{Bedaride}{article}{
   author={Bedaride, N.},
   title={Billiard complexity in rational polyhedra},
   journal={Regul. Chaotic Dyn.},
   volume={8},
   date={2003},
   number={1},
   pages={97--104},
   issn={1560-3547},
   review={\MR{1963971}},
   doi={10.1070/RD2003v008n01ABEH000228},
}

\bib{Berstel}{article}{
   author={Berstel, Jean},
   author={Pocchiola, Michel},
   title={A geometric proof of the enumeration formula for Sturmian words},
   journal={Internat. J. Algebra Comput.},
   volume={3},
   date={1993},
   number={3},
   pages={349--355},
   issn={0218-1967},
   review={\MR{1240390}},
   doi={10.1142/S0218196793000238},
}
\bib{CHT}{article}{
   author={Cassaigne, J.},
   author={Hubert, P.},
   author={Troubetzkoy, S.},
   title={Complexity and growth for polygonal billiards},
   language={English, with English and French summaries},
   journal={Ann. Inst. Fourier (Grenoble)},
   volume={52},
   date={2002},
   number={3},
   pages={835--847},
   issn={0373-0956},
   review={\MR{1907389}},
}

\bib{DavisLelievre}{article}{
author={Davis, D.},
author={Lelievre, S.},
title={Periodic paths on the pentagon,
double pentagon and golden L},
 journal={Israel Journal of Mathematics},
URL={https://arxiv.org/pdf/1810.11310},
}

\bib{Devriendt}{article}{
   author={Devriendt, Karel},
   author={Ottolini, Andrea},
   author={Steinerberger, Stefan},
   title={Graph curvature via resistance distance},
   journal={Discrete Appl. Math.},
   volume={348},
   date={2024},
   pages={68--78},
   issn={0166-218X},
   review={\MR{4694302}},
   doi={10.1016/j.dam.2024.01.012},
}

\bib{Dozier}{article}{
    AUTHOR = {Dozier, Benjamin},
     TITLE = {Equidistribution of saddle connections on translation
              surfaces},
  journal = {Journal of Modern Dynamics},
    VOLUME = {14},
      YEAR = {2019},
     PAGES = {87--120},
      ISSN = {1930-5311,1930-532X},
       DOI = {10.3934/jmd.2019004},
}

\bib{EskinSurvey}{article}{
   author={Eskin, Alex},
   title={Counting problems in moduli space},
   conference={
      title={Handbook of dynamical systems. Vol. 1B},
   },
   book={
      publisher={Elsevier B. V., Amsterdam},
   },
   isbn={0-444-52055-4},
   date={2006},
   pages={581--595},
   review={\MR{2186249}},
   doi={10.1016/S1874-575X(06)80034-2},
}

\bib{EskinMarklofMorris}{article}{
   author={Eskin, Alex},
   author={Marklof, Jens},
   author={Witte Morris, Dave},
   title={Unipotent flows on the space of branched covers of Veech surfaces},
   journal={Ergodic Theory Dynam. Systems},
   volume={26},
   date={2006},
   number={1},
   pages={129--162},
   issn={0143-3857},
   review={\MR{2201941}},
   doi={10.1017/S0143385705000234},
}

\bib{EskinMasur}{article}{
   author={Eskin, Alex},
   author={Masur, Howard},
   title={Asymptotic formulas on flat surfaces},
   journal={Ergodic Theory Dynam. Systems},
   volume={21},
   date={2001},
   number={2},
   pages={443--478},
   issn={0143-3857},
   review={\MR{1827113}},
   doi={10.1017/S0143385701001225},
}

\bib{EsMiMo}{article}{
   author={Eskin, Alex},
   author={Mirzakhani, Maryam},
   author={Mohammadi, Amir},
   title={Isolation, equidistribution, and orbit closures for the ${\rm
   SL}(2,\R)$ action on moduli space},
   journal={Ann. of Math. (2)},
   volume={182},
   date={2015},
   number={2},
   pages={673--721},
   issn={0003-486X},
   review={\MR{3418528}},
   doi={10.4007/annals.2015.182.2.7},
}

\bib{FoxKershner}{article}{
   author={Fox, Ralph H.},
   author={Kershner, Richard B.},
   title={Concerning the transitive properties of geodesics on a rational
   polyhedron},
   journal={Duke Math. J.},
   volume={2},
   date={1936},
   number={1},
   pages={147--150},
   issn={0012-7094},
   review={\MR{1545913}},
   doi={10.1215/S0012-7094-36-00213-2},
}

\bib{GKT}{article}{
   author={Galperin, G.},
   author={Kr\"uger, T.},
   author={Troubetzkoy, S.},
   title={Local instability of orbits in polygonal and polyhedral billiards},
   journal={Comm. Math. Phys.},
   volume={169},
   date={1995},
   number={3},
   pages={463--473},
   issn={0010-3616},
   review={\MR{1328732}},
}

\bib{Hofbauer}{article}{
   author={Hofbauer, Josef},
   title={A simple proof of
   $1+\frac{1}{2^2}+\frac{1}{3^2}+\dots=\frac{\pi^2}{6}$ and related
   identities},
   journal={Amer. Math. Monthly},
   volume={109},
   date={2002},
   number={2},
   pages={196--200},
   issn={0002-9890},
   review={\MR{1903157}},
   doi={10.2307/2695334},
}

\bib{HooperSchwartz}{article}{
   author={Hooper, W. Patrick},
   author={Schwartz, Richard Evan},
   title={Billiards in nearly isosceles triangles},
   journal={J. Mod. Dyn.},
   volume={3},
   date={2009},
   number={2},
   pages={159--231},
   issn={1930-5311},
   review={\MR{2504742}},
   doi={10.3934/jmd.2009.3.159},
}

\bib{HubertComplexity}{article}{
   author={Hubert, Pascal},
   title={Complexit\'e{} de suites d\'efinies par des billards rationnels},
   language={French, with English and French summaries},
   journal={Bull. Soc. Math. France},
   volume={123},
   date={1995},
   number={2},
   pages={257--270},
   issn={0037-9484},
   review={\MR{1340290}},
}

\bib{HubertTriangles}{article}{
   author={Hubert, Pascal},
   title={Propri\'et\'es combinatoires des suites d\'efinies par le billard
   dans les triangles pavants},
   language={French, with English and French summaries},
   journal={Theoret. Comput. Sci.},
   volume={164},
   date={1996},
   number={1-2},
   pages={165--183},
   issn={0304-3975},
   review={\MR{1411203}},
   doi={10.1016/0304-3975(95)00208-1},
}

\bib{HubertSchmidt}{article}{
   author={Hubert, Pascal},
   author={Schmidt, Thomas A.},
   title={An introduction to Veech surfaces},
   conference={
      title={Handbook of dynamical systems. Vol. 1B},
   },
   book={
      publisher={Elsevier B. V., Amsterdam},
   },
   isbn={0-444-52055-4},
   date={2006},
   pages={501--526},
   review={\MR{2186246}},
   doi={10.1016/S1874-575X(06)80031-7},
}

\bib{Katok}{article}{
   author={Katok, A.},
   title={The growth rate for the number of singular and periodic orbits for
   a polygonal billiard},
   journal={Comm. Math. Phys.},
   volume={111},
   date={1987},
   number={1},
   pages={151--160},
   issn={0010-3616},
   review={\MR{0896765}},
}

\bib{KMS}{article}{
    AUTHOR = {Kerckhoff, Steven},
    AUTHOR = {Masur, Howard},
    AUTHOR = {Smillie, John},
     TITLE = {Ergodicity of billiard flows and quadratic differentials},
   JOURNAL = {Ann. of Math. (2)},
    VOLUME = {124},
      YEAR = {1986},
    NUMBER = {2},
     PAGES = {293--311},
      ISSN = {0003-486X,1939-8980},
       DOI = {10.2307/1971280},
}

\bib{KontsevichZorich}{article}{
   author={Kontsevich, Maxim},
   author={Zorich, Anton},
   title={Connected components of the moduli spaces of Abelian differentials
   with prescribed singularities},
   journal={Invent. Math.},
   volume={153},
   date={2003},
   number={3},
   pages={631--678},
   issn={0020-9910},
   review={\MR{2000471}},
   doi={10.1007/s00222-003-0303-x},
}
\bib{Kortram}{article}{
   author={Kortram, R. A.},
   title={Simple Proofs for $\sum\limits^\infty_{k=1} \frac{1}{k^2} =
   \frac{\pi^2} {6}$ and sin $x = x \prod\limits^\infty_{k=1} \left(1 -
   \frac{x^2}{k^2\pi^2} \right)$},
   journal={Math. Mag.},
   volume={69},
   date={1996},
   number={2},
   pages={122--125},
   issn={0025-570X},
   review={\MR{1573153}},
}

\bib{KrNoTr}{article}{
  author={Krueger, Tyll},
  author={Nogueira, Arnaldo},
  author={Troubetzkoy, Serge},
  title={On the complexity  of polygonal billiards},
  journal={arXiv:2312.04133},
}

\bib{Masur1}{article}{
    AUTHOR = {Masur, Howard},
     TITLE = {The growth rate of trajectories of a quadratic differential},
   JOURNAL = {Ergodic Theory Dynam. Systems},
    VOLUME = {10},
      YEAR = {1990},
    NUMBER = {1},
     PAGES = {151--176},
      ISSN = {0143-3857,1469-4417},
       DOI = {10.1017/S0143385700005459},
}

\bib{Masur2}{article}{
    AUTHOR = {Masur, Howard},
     TITLE = {Lower bounds for the number of saddle connections and closed
              trajectories of a quadratic differential},
 BOOKTITLE = {Holomorphic functions and moduli, {V}ol.\ {I} ({B}erkeley,
              {CA}, 1986)},
    SERIES = {Math. Sci. Res. Inst. Publ.},
    VOLUME = {10},
     PAGES = {215--228},
 PUBLISHER = {Springer, New York},
      YEAR = {1988},
      ISBN = {0-387-96766-4},
       DOI = {10.1007/978-1-4613-9602-4\_20},
}

\bib{McMullen:survey}{article}{
   author={McMullen, Curtis T.},
   title={Billiards and Teichm\"uller curves},
   journal={Bull. Amer. Math. Soc. (N.S.)},
   volume={60},
   date={2023},
   number={2},
   pages={195--250},
   issn={0273-0979},
   review={\MR{4557380}},
   doi={10.1090/bull/1782},
}

\bib{McMullen:heights}{article}{
   author={McMullen, Curtis T.},
   title={Billiards, heights, and the arithmetic of non-arithmetic groups},
   journal={Invent. Math.},
   volume={228},
   date={2022},
   number={3},
   pages={1309--1351},
   issn={0020-9910},
   review={\MR{4419633}},
   doi={10.1007/s00222-022-01101-4},
}

\bib{McMullen:polygons}{article}{

author={McMullen, Curtis T.},
   title={Billiard in regular polygons},
   URL = {https://people.math.harvard.edu/~ctm/papers/home/text/papers/reg/reg.pdf},
}

\bib{Mignosi}{article}{
   author={Mignosi, Filippo},
   title={On the number of factors of Sturmian words},
   journal={Theoret. Comput. Sci.},
   volume={82},
   date={1991},
   number={1},
   pages={71--84},
   issn={0304-3975},
   review={\MR{1112109}},
   doi={10.1016/0304-3975(91)90172-X},
}

\bib{MorseHedlund}{article}{
   author={Morse, Marston},
   author={Hedlund, Gustav A.},
   title={Symbolic dynamics II. Sturmian trajectories},
   journal={Amer. J. Math.},
   volume={62},
   date={1940},
   pages={1--42},
   issn={0002-9327},
   review={\MR{0000745}},
   doi={10.2307/2371431},
}

\bib{Rauzy}{article}{
   author={Rauzy, G\'erard},
   title={Low complexity and geometry},
   conference={
      title={Dynamics of complex interacting systems},
      address={Santiago},
      date={1994},
   },
   book={
      series={Nonlinear Phenom. Complex Systems},
      volume={2},
      publisher={Kluwer Acad. Publ., Dordrecht},
   },
   isbn={0-7923-4173-2},
   date={1996},
   pages={147--177},
   review={\MR{1421795}},
   doi={10.1007/978-94-017-1323-8\_4},
}

\bib{Scheglov1}{article}{
author = {Scheglov, Dimitri},
title = {Complexity growth of a typical triangular billiard is weakly exponential}, 
journal = {J.\ Anal.\ Math.},
volume = {142},
date = {2020},
pages = {105--124},
}

\bib{Scheglov2}{article}{
author = {Scheglov, Dimitri},
title = {Complexity lower bound for typical right triangular billiards}, 
journal = {arXiv:2207.00910}
}

\bib{SmillieWeiss}{article}{
   author={Smillie, John},
   author={Weiss, Barak},
   title={Characterizations of lattice surfaces},
   journal={Invent. Math.},
   volume={180},
   date={2010},
   number={3},
   pages={535--557},
   issn={0020-9910},
   review={\MR{2609249}},
   doi={10.1007/s00222-010-0236-0},
}

\bib{Troubetzkoy}{article}{
   author={Troubetzkoy, S.},
   title={Complexity lower bounds for polygonal billiards},
   journal={Chaos},
   volume={8},
   date={1998},
   number={1},
   pages={242--244},
   issn={1054-1500},
   review={\MR{1609769}},
   doi={10.1063/1.166301},
}

\bib{Veech:eisenstein}{article}{
   author={Veech, W. A.},
   title={Teichm\"{u}ller curves in moduli space, Eisenstein series and an
   application to triangular billiards},
   journal={Invent. Math.},
   volume={97},
   date={1989},
   number={3},
   pages={553--583},
   issn={0020-9910},
   review={\MR{1005006}},
   doi={10.1007/BF01388890},
}

\bib{Veech:polygon}{article}{
   author={Veech, William A.},
   title={The billiard in a regular polygon},
   journal={Geom. Funct. Anal.},
   volume={2},
   date={1992},
   number={3},
   pages={341--379},
   issn={1016-443X},
   review={\MR{1177316}},
   doi={10.1007/BF01896876},
}

\bib{Veech:siegel}{article}{
   author={Veech, William A.},
   title={Siegel measures},
   journal={Ann. of Math. (2)},
   volume={148},
   date={1998},
   number={3},
   pages={895--944},
   issn={0003-486X},
   review={\MR{1670061}}}

\bib{Vorobets}{article}{
   author={Vorobets, Ya.\ B.},
   title={Plane structures and billiards in rational polygons: the Veech
   alternative},
   language={Russian},
   journal={Uspekhi Mat. Nauk},
   volume={51},
   date={1996},
   number={5(311)},
   pages={3--42},
   issn={0042-1316},
   translation={
      journal={Russian Math. Surveys},
      volume={51},
      date={1996},
      number={5},
      pages={779--817},
      issn={0036-0279},
   },
   review={\MR{1436653}},
   doi={10.1070/RM1996v051n05ABEH002993},
}

\bib{ZemljakovKatok}{article}{
   author={Zemljakov, A. N.},
   author={Katok, A. B.},
   title={Topological transitivity of billiards in polygons},
   language={Russian},
   journal={Mat. Zametki},
   volume={18},
   date={1975},
   number={2},
   pages={291--300},
   issn={0025-567X},
   review={\MR{0399423}},
}

\end{biblist}
\end{bibdiv}
\end{document}